\documentclass{compositio}

\usepackage{amsmath,amsfonts,yhmath,hyperref}
\usepackage{amsthm}
\usepackage{amssymb}
\usepackage[latin9]{inputenc}
\usepackage[nottoc,notlof,notlot]{tocbibind}
\usepackage{mathtools}
\usepackage{enumitem}
\usepackage{tikz-cd}
\usepackage{multirow}
\usepackage{mathrsfs}

\usepackage{xparse}

\usepackage{array}

\usepackage{pgf,tikz}

\setitemize{label=$\triangleright$}

\usepackage{dsfont}
\usetikzlibrary{arrows}
\usetikzlibrary[patterns]

\newcommand{\prim}{\mathrm{prim}}

\newcommand{\NN}{\mathbb{N}}
\newcommand{\ZZ}{\mathbb{Z}}
\newcommand{\RR}{\mathbb{R}}
\newcommand{\CC}{\mathbb{C}}

\newcommand{\QQ}{\mathbb{Q}}

\newcommand{\TT}{\mathbb{T}}

\newcommand{\Dfk}{\mathfrak{D}}

\newcommand{\pfk}{\mathfrak{p}}

\newcommand{\mfk}{\mathfrak{m}}

\newcommand{\CCC}{\mathscr{C}}

\renewcommand{\P}{\mathcal{P}}
\renewcommand{\L}{\mathcal{L}}
\newcommand{\N}{\mathcal{N}}
\newcommand{\M}{\mathcal{M}}

\newcommand{\R}{\mathcal{R}}

\newcommand{\dd}{ \mathrm{d} }
\newcommand{\refined}{\mathrm{ref}}

\newcommand{\val}{\mathrm{val}}

\newcommand{\im}{\mathfrak{Im}}

\newtheorem{theo}{Theorem}[section]
\newtheorem*{theom}{Theorem}
\newtheorem{prop}[theo]{Proposition}

\newtheorem{coro}[theo]{Corollary}
\newtheorem{lem}[theo]{Lemma}

\newtheorem{conj}{Conjecture}[section]

\theoremstyle{definition}
\newtheorem{defi}[theo]{Definition}

\theoremstyle{remark}
\newtheorem{remark}[theo]{Remark}
\newenvironment{rem}[1]{
    \begin{remark}#1}{
    \xqed{\blacklozenge}\end{remark}
}

\theoremstyle{remark}
\newtheorem{example}[theo]{Example}
\newenvironment{expl}[1]{
    \begin{example}#1}{
    \xqed{\lozenge}\end{example}
}

\newcommand{\xqed}[1]{
    \leavevmode\unskip\penalty9999 \hbox{}\nobreak\hfill
    \quad\hbox{\ensuremath{#1}}}


\def\pearl (#1) at (#2:#3) label=#4; {
    \node[draw,rectangle, minimum width=0.5cm, minimum height = 0.5 cm] (#1) at (#2:#3) {$#4$} ;
}

\def\flatpearl (#1) at (#2:#3) label=#4; {
    \node[draw,circle, minimum width=0.5cm, minimum height = 0.5 cm] (#1) at (#2:#3) {$#4$} ;
}

\def\dottedpearl (#1) at (#2:#3) label=#4; {
    \node[draw,dotted,circle, minimum width=0.5cm, minimum height = 0.5 cm] (#1) at (#2:#3) {$#4$} ;
}











\classification{14N10, 14T90, 11G10, 14K05, 14H99}
\keywords{Enumerative geometry, tropical refined invariants, abelian surfaces, multiple cover formula, floor diagrams\\ }
 
\begin{document}
 
 
\title{Multiple cover formula for complex and refined invariants in abelian surfaces}
\author{Thomas Blomme}

\begin{abstract}
This paper deals with the computation of enumerative invariants of abelian surfaces through the tropical approach. We develop a pearl diagram algorithm similar to the floor diagram algorithm used in toric surfaces that concretely solves the tropical problem. 

These diagrams can be used to prove specific cases of Oberdieck's multiple cover formula that reduce the computation of invariants for non-primitive classes to the primitive case, getting rid of all diagram considerations and providing short explicit formulas. The latter can be used to prove the quasi-modularity of generating series of classical invariants, and the polynomiality of coefficients of fixed codegree in the refined invariants.
\end{abstract}

\maketitle

\tableofcontents

\section{Introduction}

\subsection{Preliminaries and problem setting}

\subsubsection{Abelian surfaces.} A complex torus is constructed as the quotient of the algebraic torus $N_{\CC^*}=N\otimes\CC^*$, where $N$ is a lattice, by a second lattice $\Lambda\subset N_{\CC^*}$. A complex torus is called an \textit{abelian surface} if it can be endowed with the choice of a positive line bundle. This condition on the complex torus is known as \textit{Riemann bilinear relation} and are explained in chapter 2.6 of \cite{griffiths2014principles}. Abelian surfaces form a family of complex manifolds that appear in many places throughout algebraic geometry.

Their tropical counterparts, called \textit{tropical abelian surfaces}, are real tori $N_\RR/\Lambda$, obtained by quotienting a real vector space $N_\RR$ containing a lattice $N$, by a second lattice $\Lambda\subset N_\RR$, such that the result can also be endowed with a positive line bundle. This condition can be expressed in term of the inclusion $\Lambda\subset N_\RR$: taking basis of $N$ so that $N_\RR=\RR^2$ and $S$ a matrix whose columns span $\Lambda\subset\RR^2$, the condition amounts to the existence of a matrix $C$, formally a map $\Lambda^*\to N$, such that the matrix product $CS^T$ is symmetric.

\subsubsection{Curves in abelian surfaces.} One way to understand a complex manifold is to study the curves that it contains. To do this, one can count the curves of fixed degree and genus that satisfy given conditions, for instance passing through the right number of points. In the case of the projective plane, one can count degree $d$ and genus $g$ curves passing through $3d-1+g$ points. In the complex case, the answer happens only to depend on the nature of the constraints and not on the geometric choice of the constraints. In some cases, these invariants coincide with Gromov-Witten invariants.

For abelian surfaces, we count curves of genus $g$ realizing the class $C$ passing through $g$ points, where $C$ is represented by a $2\times 2$ integer matrix that can be assumed to be diagonal with positive entries. The result does not depend on the choice of the points nor the abelian surface as long as it contains curves in the class $C$. These invariants are denoted $\N_{g,C}$. They are considered in \cite{bryan1999generating} and \cite{bryan2018curve}. Moreover, it is possible not to fix the homology class $C$ but rather the linear system to which the curves belong. This amounts to fixing a line bundle $\L$ and look for curves that are zero-loci of sections of $\L$. As the Picard group of an abelian surface is of dimension $2$ (it is in fact given by the dual complex torus), fixing the linear system is a codimension $2$ condition. Thus, we can count the number of genus $g$ curves in a fixed linear system passing through $g-2$ points in general position. We also get an invariant denoted this time by $\N^{FLS}_{g,C}$.

\subsubsection{The tropical approach.} As the number of curves of a fixed degree and genus does not depend on the choice of the constraints, it is possible to try to compute this number by choosing the constraints in a degenerated way, called the \textit{tropical limit}. This approach was first implemented in the case of toric surfaces by G. Mikhalkin \cite{mikhalkin2005enumerative}. Close to the tropical limit, complex solutions to the enumerative problem break into several pieces whose structure is encoded in a \textit{tropical curve}, which is sufficient to recover the complex curves. The main result of \cite{mikhalkin2005enumerative} is the \textit{correspondence theorem}: the count of complex curve solution to the enumerative problem is equal to the number of tropical solutions, provided the latter are counted with a suitable multiplicity. The correspondence theorem has since been generalized to different settings with various techniques \cite{shustin2002patchworking, nishinou2006toric, tyomkin2012tropical, mandel2020descendant}.

One of the latest generalizations of the correspondence theorem is presented in \cite{nishinou2020realization}. Using the log-geometry techniques from the correspondence theorem by T. Nishinou and B. Siebert \cite{nishinou2006toric}, T. Nishinou gives a correspondence theorem for complex curves of genus $g$ passing through $g$ points in general position inside an abelian surface. This extends the tropical techniques beyond the toric scope. His correspondence theorem yields a tropical way of computing the invariant $\N_{g,C}$. The multiplicity provided by \cite{nishinou2020realization} is computed in \cite{blomme2022abelian1}: if $h:\Gamma\to\TT A$ is a tropical curve passing through $g$ points, its multiplicity is equal to $\delta_\Gamma m_\Gamma$, where $\delta_\Gamma$ is the gcd of the weights of the edges of the tropical curve $\Gamma$, and $m_\Gamma$ is the usual product of vertex multiplicities, as presented in \cite{mikhalkin2005enumerative}.

The correspondence theorem from \cite{nishinou2020realization} is generalized in \cite{blomme2022abelian2} to work for the enumerative problem consisting in counting the genus $g$ curves in a fixed linear system that pass through $g-2$ points in general position, so that one gets the invariant $\N^{FLS}_{g,C}$. The multiplicity of a tropical curve $h:\Gamma\to\TT A$ is now of the form $\delta_\Gamma\Lambda_\Gamma^\Sigma m_\Gamma$, where $\delta_\Gamma$ and $m_\Gamma$ are as before, and $\Lambda_\Gamma^\Sigma$ is the index of the lattice spanned by the unmarked loops of $\Gamma$ inside $H_1(\TT A,\ZZ)$. See Section \ref{section tropical invariants} or \cite{blomme2022abelian2} for more details.

\subsubsection{Refined invariants.} Correspondence theorems in tropical geometry provide multiplicities to tropical curves so that their counts yield complex invariants. In our case, we have two multiplicities, one for each of the enumerative problems, so that the counts of solutions using each of them give the invariants $\N_{g,C}$ and $\N_{g,C}^{FLS}$. The multiplicities provided by \cite{mikhalkin2005enumerative} as well as the above multiplicities are products over the vertices of the tropical curves. In \cite{block2016refined}, F. Block and L. G\"ottsche proposed to replace the vertex multiplicities by their quantum analog:
$$\prod_V m_V \text{ is replaced by }\prod_V(q^{m_V/2}-q^{-m_V/2})\in\ZZ[q^{\pm 1/2}].$$
This yields a refined multiplicity which is Laurent polynomial in a new variable $q$. Surprisingly, the refined count is still invariant. This was proven by I. Itenberg and G. Mikhalkin \cite{itenberg2013block} in the toric setting. Several interpretations of these tropical invariants in the classical setting have been provided, but they remain quite mysterious. See \cite{bousseau2019tropical} and \cite{mikhalkin2017quantum}.

Existence of refined invariants in the abelian setting is proved in \cite{blomme2022abelian1,blomme2022abelian2}. Counting curves passing through $g$ points, we prove that the counts using multiplicities $m_\Gamma=\prod m_V$ and $\delta_\Gamma m_\Gamma$ are invariants. The corresponding invariant counts are denoted by $M_{g,C}$ and $N_{g,C}$. Each of these multiplicities is refined as follows:
$$\begin{array}{>{\displaystyle}l}
\prod_V m_V \text{ is replaced by }\prod_V(q^{m_V/2}-q^{-m_V/2})\in\ZZ[q^{\pm 1/2}],\\
\delta_\Gamma\prod_V m_V \text{ is replaced by }\sum_{k|\delta_\Gamma}k^{2g-2}\varphi(k)\prod_V(q^{m_V/2k}-q^{-m_V/2k})\in\ZZ[q^{\pm 1/2}],\\
\end{array}$$
where $\varphi$ is the Euler function. Notice that the second refinement is not the one used in \cite{blomme2022abelian1} and \cite{blomme2022abelian2}. Its invariance follows from the fact that we have invariance counting curves with a fixed $\delta_\Gamma$, so that the recipe for the multiplicity may depend on the value of $\delta_\Gamma$. The corresponding invariant counts are denoted by $BG_{g,C}$ and $R_{g,C}$ respectively. The $BG$ stands for Block-G\"ottsche, who introduced the refined multiplicity, while $R$ stands for ``refined". The justification of such a weird refinement comes from the formula satisfied in Theorem \ref{theorem multiple cover formulas point}. The counts for curves in a fixed linear system are also invariants and are denoted with an additional ``FLS".

\subsubsection{Explicit computations and floor diagrams.} The use of a correspondence theorem only reduces a complex problem to a tropical one. The latter still needs to be resolved in order to get concrete values, or prove some regularity statements. The idea leading us from complex geometry to tropical geometry can still be applied: by choosing the constraints in a degenerated way, the tropical curves break into several pieces whose structure can be encoded in an object called \textit{diagram}, and the number of solutions encoded in a diagram can be recovered with only the knowledge of the diagram.

This approach has been implemented for counting degree $d$ and genus $g$ curves passing through $3d-1+g$ points in $\CC P^2$ (as well as several other toric surfaces) by E. Brugall\'e and G. Mikhakin \cite{brugalle2007enumeration, brugalle2008floor}. It was used to prove several regularity statements, see for instance \cite{mikhalkin2010labeled}. The \textit{floor diagram algorithm} in toric surfaces relies on the fact that it is possible to ``project" the problem onto a line, reducing the enumeration of tropical curves to the enumeration of some specific tropical covers of the line. Tropical covers are maps between graphs that satisfy a balancing condition. They were studied in \cite{caporaso2014gonality} and \cite{cavalieri2010tropical}. Floor diagrams have since been generalized to other settings, see for instance \cite{block2016fock}.

In our problem, it is not possible to project the constraints onto a line, but it is possible to do so on a circle. We obtain \textit{circular diagrams}. Such diagrams were already considered in \cite{boehm2018tropical} and \cite{bohm2020counts} to count covers of elliptic curves and curves in the trivial line bundle over an elliptic curve. They also found a use to deal with Gromov-Witten invariants of bielliptic surfaces \cite{blomme2024bielliptic}. In this paper, we have similar diagrams whose enumeration with different multiplicities gives the desired invariants. However, their main interests consists in proving some \textit{multiple cover formulas}. After that, one does not need the diagrams to compute the values of the invariants.

Floor diagrams also carry a relation to operators on Fock spaces that has been studied by Y. Cooper and R. Pandharipande \cite{cooper2017fock}, as well as F. Block and L. G\"ottsche \cite{block2016fock, block2016refined} where the connection to refined invariants is also dealt with. Fock spaces can for instance be used to compute generating series of Hurwitz numbers and Gromov-Witten invariants of toric surfaces. This is done by expressing the Gromov-Witten invariants as matrix elements of operators on a Fock space. In the setting of circular diagrams such as in \cite{bohm2020counts}, it is possible to express the invariants as the trace of an operator on the Fock space, as done in \cite{cooper2017fock} for Gromov-Witten invariants of the trivial line bundle over an elliptic curve. We do not provide such an approach in our case although there might be one.

\subsection{Results}

\subsubsection{Pearl diagram algorithm.} The first contribution of the present paper is to provide an algorithm that solves each of the tropical enumerative problems in a suitably chosen situation: the abelian surface as well as the point constraints are chosen in a \textit{stretched} way. This choice reduces each tropical enumerative problem to an enumeration of finite decorated graphs with some suitable multiplicity.

More precisely, let $\TT A$ be a \textit{stretched} tropical abelian surface, \textit{i.e.} coming from a lattice spanned by $\left(\begin{smallmatrix} L & \varepsilon \\ \varepsilon & l \end{smallmatrix}\right)$ with $L\gg l\gg\varepsilon$, up to bounded coefficients depending on the chosen class of curves. In other terms, it has a long horizontal direction, and a small vertical direction. We prove that tropical curves solution to our problem in $\TT A$ have edges with horizontal slopes, and complement of these horizontal edges are subgraphs each homeomorphic to a circle going around the vertical direction of the torus. Contracting these components, which amounts to contract the short vertical direction of $\TT A$ yields the combinatorial objects called \textit{pearl diagrams}. Conversely, given a pearl diagram, we explain how to recover the tropical curves it encodes, which amounts to find the vertical coordinate of the horizontal edges, and the shape of the components in their complement. This technical part is achieved in Propositions \ref{proposition curves in diagram point case} and \ref{proposition curves in diagram linear system case}.

In the case of primitive classes of the form $\left(\begin{smallmatrix} 1 & 0 \\ 0 & n \end{smallmatrix}\right)$, the enumeration of pearl diagrams is completely straightforward and they allow one to recover the explicit expressions for the invariants that were already proved in \cite{bryan1999generating}.

The pearl diagrams have two drawbacks. First, their enumeration outside the primitive case may be considered difficult. Furthermore, the multiplicities that we need to use to count pearl diagrams are also quite complicated. This is due to the fact that it is not possible to get a grasp on the gcd of the tropical curves encoded in a diagram when performing the reconstruction from Proposition \ref{proposition curves in diagram point case}. Thus, it is at first not possible to get the count of tropical curves using the multiplicity $\delta_\Gamma m_\Gamma$ without performing additional manipulations, carried out in Section \ref{sec-diagram-mult} for the point case.

In total, there are a lot of diagrams to count, and those have obscure multiplicities. For these reasons, there is probably no way to briefly present the algorithm and we thus refer the interested reader to the rest of the paper. Definition of pearl diagrams can be found in Section \ref{section pearl diagrams definition}, and the multiplicities are provided by Theorem \ref{theorem diagram enumeration g points} for the point case, and Theorem \ref{theorem diagram enumeration linear system} for the FLS case. The good news is that, surprisingly, the two drawbacks previously mentioned cancel each other in some sense: it is possible to use the diagrams to prove an easy formula so that one can compute the invariants without having to know how to enumerate pearl diagrams.

\subsubsection{Multiple cover formulas.} The pearl diagrams have a \textit{bidegree} $(d_1,d_2)$, and their enumeration is made over the pearl diagrams of a fixed bidegree. The main interesting property is that pearl diagrams of bidegree $(d_1,d_2)$ can be used to enumerate curves in a class $\left(\begin{smallmatrix} d_1 & a \\ 0 & d_2 \\ \end{smallmatrix}\right)$ for any choice of $a$. Of course, this enumeration is made using different multiplicities. As the counts are known to depend only on the equivalence class of $\left(\begin{smallmatrix} d_1 & a \\ 0 & d_2 \\ \end{smallmatrix}\right)$, we extract identities between enumerative invariants from relations between the different diagram multiplicities. More precisely, taking $a=1$, we obtain the curve count for the primitive class, but expressed as a sum over the bidegree $(d_1,d_2)$ pearl diagrams with some suitable multiplicity. The miracle lies in the small convolution formula that relates the multiplicities used to recover the counts in the classes $\left(\begin{smallmatrix} d_1 & 1 \\ 0 & d_2 \\ \end{smallmatrix}\right)$ and $\left(\begin{smallmatrix} d_1 & 0 \\ 0 & d_2 \\ \end{smallmatrix}\right)$ respectively. We thus derive the following formulas that enable an effective computation of all the invariants.

\begin{theom}\ref{theorem multiple cover formulas point}
We have the following multiple cover formulas
$$\begin{array}{c>{\displaystyle}r>{\displaystyle}l}
(i) & M_{g,\left(\begin{smallmatrix}
d_1 & 0 \\
0 & d_2 \\
\end{smallmatrix}\right)} & =\sum_{k|\mathrm{gcd}(d_1,d_2)}k^{4g-4}N_{g,\left(\begin{smallmatrix}
1 & 0 \\
0 & d_1 d_2/k^2 \\
\end{smallmatrix}\right)} , \\
(ii) & N_{g,\left(\begin{smallmatrix}
d_1 & 0 \\
0 & d_2 \\
\end{smallmatrix}\right)} & =\sum_{k|\mathrm{gcd}(d_1,d_2)}k^{4g-3}N_{g,\left(\begin{smallmatrix}
1 & 0 \\
0 & d_1 d_2/k^2 \\
\end{smallmatrix}\right)} , \\
\end{array}$$
$$\begin{array}{c>{\displaystyle}r>{\displaystyle}l}
(iii) & BG_{g,\left(\begin{smallmatrix}
d_1 & 0 \\
0 & d_2 \\
\end{smallmatrix}\right)} & =\sum_{k|\mathrm{gcd}(d_1,d_2)}k^{2g-2}BG_{g,\left(\begin{smallmatrix}
1 & 0 \\
0 & d_1 d_2/k^2 \\
\end{smallmatrix}\right)}(q^k). \\
(iv) & R_{g,\left(\begin{smallmatrix}
d_1 & 0 \\
0 & d_2 \\
\end{smallmatrix}\right)} & =\sum_{k|\mathrm{gcd}(d_1,d_2)}k^{2g-1}R_{g,\left(\begin{smallmatrix}
1 & 0 \\
0 & d_1 d_2/k^2 \\
\end{smallmatrix}\right)}(q^k). \\
\end{array}$$
\end{theom}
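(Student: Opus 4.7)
The proof proceeds diagrammatically. As reviewed just before the statement, a pearl diagram of bidegree $(d_1,d_2)$ enumerates tropical curves realizing any class of the form $\begin{pmatrix} d_1 & a \\ 0 & d_2 \end{pmatrix}$, with a suitable multiplicity $\mu_a$ depending on $a$. Specialising to $a=0$ produces the left-hand side of each of (i)--(iv) directly, since this class is already in Smith normal form, while specialising to $a=1$ produces the primitive invariant $N_{g,\left(\begin{smallmatrix}1 & 0\\ 0 & d_1 d_2\end{smallmatrix}\right)}$ appearing as the $k=1$ summand on the right; here one uses that for a primitive class $\delta_\Gamma=1$ for every encoded curve, hence $M=N$ and $BG=R$. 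The four identities therefore reduce to a combinatorial convolution between the $a=0$ and $a=1$ diagram multiplicities.

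\textbf{Step 1.} Use the relevant diagram enumeration theorem (point case) to rewrite each left-hand side as $\sum_D \mu_0(D)$, summed over bidegree $(d_1,d_2)$ pearl diagrams, and each right-hand summand $N_{g,\left(\begin{smallmatrix}1 & 0\\ 0 & d_1 d_2/k^2\end{smallmatrix}\right)}$ as $\sum_{D'} \mu_1(D')$, summed over bidegree $(d_1/k,d_2/k)$ pearl diagrams.

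\textbf{Step 2.} Stratify bidegree $(d_1,d_2)$ diagrams by the gcd $k$ of the weights of the tropical curves they encode, necessarily a divisor of $\gcd(d_1,d_2)$, and identify each stratum with the set of bidegree $(d_1/k,d_2/k)$ pearl diagrams encoding primitive ($\delta=1$) tropical curves via the ``rescale all weights by $1/k$'' operation. Each vertex multiplicity transforms predictably under this rescaling. Combined with the Euler-characteristic relation $V(\Gamma)=2g-2$ for a trivalent genus $g$ tropical curve in an abelian surface, this yields the exponent $k^{4g-4}$ in (i); the extra $\delta_\Gamma=k$ in the definition of $N$ upgrades it to $k^{4g-3}$ in (ii); and the substitution $q\mapsto q^k$, together with the prefactors $k^{2g-2}$ and $k^{2g-1}$, produces (iii) and (iv). Identity (iv) additionally uses the classical identity $\sum_{d\mid k}\varphi(d)=k$, which is precisely the combinatorial role played by the Euler function $\varphi$ in the definition of the refined multiplicity for $R$.

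\textbf{Main obstacle.} The main difficulty, already flagged in the introduction, is that the reconstruction of a tropical curve from its encoding pearl diagram does not give direct access to $\delta_\Gamma$. For the recipes involving $\delta_\Gamma$ (namely $N$ and $R$) the $a=0$ diagram multiplicity thus has to be rewritten into a form that exposes $\delta_\Gamma$ combinatorially; this preparatory manipulation is the purpose of the dedicated diagram-multiplicity section for the point case. Once the multiplicities are in this form, the convolution of Step 2 becomes a short algebraic manipulation, and the otherwise peculiar shape of the refined multiplicity in $R$ is revealed to be the unique choice that makes the M\"obius-style cancellation producing (iv) collapse into the clean formula stated.
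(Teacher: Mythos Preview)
Your outline matches the paper's approach, but Step~2 as written conflates two distinct stratifications and thereby hides the one non-trivial step. A single pearl diagram $\Dfk$ encodes tropical curves with \emph{varying} gcds $\delta_\Gamma$, so ``stratify diagrams by the gcd of the curves they encode'' is not well-defined. What the paper actually does is the following. For (i) and (iii), one stratifies the \emph{loop data} $(k_\pfk)$ inside each diagram by $\gcd(w_e,k_\pfk)$, obtaining auxiliary multiplicities $\omega_k(\Dfk)$; a direct homogeneity computation gives $m_0=\epsilon_{4g-4}\ast\omega_1$ and $M_0=\epsilon_{2g-2}\ast\Omega_1$, and the identities $\omega_1=m_1$, $\Omega_1=M_1$ are immediate from the definitions. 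For (ii) and (iv), one instead stratifies the \emph{curves} encoded by each diagram according to $\delta_\Gamma$, obtaining the multiplicities $\mu_1,\Upsilon_1$ that count primitive curves in the class $\left(\begin{smallmatrix} d_1 & 0\\ 0 & d_2\end{smallmatrix}\right)$; here $\mu=\epsilon_{4g-3}\ast\mu_1$ is Proposition~\ref{proposition relation invariants counts point case} transported to the diagram level.

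The crucial link, which your plan does not make explicit, is the identity $\mu_1=m_1$: the count of \emph{primitive} curves in the (generally non-primitive) class $\left(\begin{smallmatrix} d_1 & 0\\ 0 & d_2\end{smallmatrix}\right)$ equals the count of curves in the primitive class $\left(\begin{smallmatrix} d_1 & 1\\ 0 & d_2\end{smallmatrix}\right)\sim\left(\begin{smallmatrix} 1 & 0\\ 0 & d_1d_2\end{smallmatrix}\right)$. This is not obvious a priori and is exactly the ``miracle'' the paper refers to. It is deduced from the coincidence $m_0=\epsilon_{4g-4}\ast\omega_1=\epsilon_{4g-4}\ast\mu_1$ of the two convolution expressions for $m_0$, forcing $\omega_1=\mu_1$; since $\omega_1=m_1$ trivially, one gets $\mu_1=m_1$. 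Your ``Main obstacle'' paragraph gestures at this but does not name the identity or how it is proved. For (iv), your use of $\sum_{d|k}\varphi(d)=k$ is correct: in the paper this appears as $(\epsilon_{2g-2}\varphi)\ast\epsilon_{2g-2}=\epsilon_{2g-1}$, combined with $\Upsilon=(\epsilon_{2g-2}\varphi)\ast M_0$ and $M_0=\epsilon_{2g-2}\ast M_1$.
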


Theorem \ref{theorem multiple cover formula linear case} gives analog results for invariants in a fixed linear system. The above multiple cover formulas (ii) is a particular case of the more general Oberdieck's multiple cover formula conjectured by G. Oberdieck in \cite{oberdieck2022gromov}, and dealing with reduced Gromov-Witten invariants of abelian surfaces. The formula is as follows.

\begin{conj}\label{conj oberdieck formula}\cite{oberdieck2022gromov}
Let $\alpha$ be a tautological class in the cohomology ring $H^\ast(\mathcal{M}_{g,n})$ and $\gamma_i\in H^\ast(\CC A,\RR)$ be arbitrary insertions, where $\CC A$ is an abelian surface. Then, one has
$$\langle \alpha ;\gamma_1,\dots,\gamma_n\rangle^{\CC A}_{g,C}=\sum_{k| C} k^{3g-3+n-\deg\alpha}\langle \alpha ;\varphi_k(\gamma_1),\dots,\varphi_k(\gamma_n)\rangle^{\CC A_k}_{g,\varphi_k(C/k)},$$
where $\varphi_k: H^\ast(\CC A,\RR)\to H^\ast(\CC A_k,\QQ)$ is a morphism of algebra that preserves the canonical element and takes $C/k$ to a primitive curve class of the same norm.
\end{conj}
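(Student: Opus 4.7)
The plan is to extend the pearl diagram technology developed in this paper from the case of $g$ point constraints, which already yields Theorem \ref{theorem multiple cover formulas point}(ii), to the setting of arbitrary cohomological and tautological insertions, so as to obtain Oberdieck's full conjecture as a convolution identity between decorated diagram multiplicities. Three layers of work are required: a suitable correspondence theorem, an adaptation of the pearl diagram algorithm, and the correct identification of the scaling exponent.

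First, I would upgrade the realization theorems of \cite{nishinou2020realization} and \cite{blomme2022abelian2} to carry insertions $\gamma_i \in H^*(\CC A,\RR)$ as well as descendant psi-classes, using the log-geometric descendant techniques of \cite{mandel2020descendant}. Insertions of the point class and of $H^1$-classes already appear in the existing abelian correspondence theorems as point conditions and as FLS-type codimension-one constraints; general tautological classes should reduce, via tropical topological recursion relations, to products of psi- and kappa-type classes encoded locally at marked vertices of the tropical curve. Running the stretching argument of the paper in this enriched setting should still force each solution to decompose into horizontal edges together with short circular components wrapping the short direction, producing pearl diagrams decorated with insertion markings; the relevant tropical multiplicity should then split as a product of the unmarked vertex weights appearing in Theorems \ref{theorem diagram enumeration g points} and \ref{theorem diagram enumeration linear system} and purely local contributions at the insertion-carrying vertices.

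The heart of the argument is then the convolution identity. For fixed bidegree $(d_1,d_2)$, one must compare the decorated multiplicity that recovers the invariant in the primitive class $\left(\begin{smallmatrix} d_1 & 1 \\ 0 & d_2 \end{smallmatrix}\right)$ with the one recovering the invariant in the imprimitive class $\left(\begin{smallmatrix} d_1 & 0 \\ 0 & d_2 \end{smallmatrix}\right)$. In the point case the convolution produces the scaling factor $k^{4g-3}$; for a decoration of total cohomological weight $\deg\alpha$ with $n$ insertions $\gamma_i$, the pullbacks $\varphi_k(\gamma_i)$ under the degree-$k^2$ isogeny $\CC A \to \CC A_k$ should supply corrective factors that turn $k^{4g-3}$ into $k^{3g-3+n-\deg\alpha}$. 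Concretely, each point insertion is expected to subtract $2$ from the exponent and each $H^1$-insertion to subtract $1$, with $\deg\alpha$ removing one more power of $k$ per descendant unit, exactly matching the shift predicted by Oberdieck.

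The main obstacle will be precisely this last step: the target exponent $3g-3+n-\deg\alpha$ is cohomological in origin, whereas the pearl diagram machinery is purely combinatorial, and reconciling the two requires a fine bookkeeping of how descendant weights scale under the covering maps. In the point case of this paper the miracle is controlled by $\delta_\Gamma$ together with Euler's $\varphi$; in general one would presumably need a tropical Riemann--Roch or deformation-invariance statement for descendant contributions at the vertices. An alternative, less tropical route — degenerating $\CC A$ to a product of elliptic curves and reducing to known multiple cover formulas on elliptic fibrations, perhaps combined with the Fock space viewpoint of \cite{cooper2017fock} alluded to in the introduction — might be more tractable for general insertions, but it would not produce the concrete diagrammatic formulas that motivate the present approach.
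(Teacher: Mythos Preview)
The statement you are attempting to prove is labeled in the paper as a \emph{Conjecture} (Oberdieck's multiple cover formula, cited from \cite{oberdieck2022gromov}); the paper does not prove it and contains no proof to compare against. What the paper does prove are two special cases: formula (ii) of Theorem \ref{theorem multiple cover formulas point} corresponds to $\alpha=1$, $n=g$, all $\gamma_i$ point classes, and the analogous identity in Theorem \ref{theorem multiple cover formula linear case} corresponds to $n=g+2$ with $g-2$ point insertions and four $H^1$-insertions. The paper explicitly presents these as \emph{evidence} for the conjecture, and further suggests that the refined formulas would become evidence for the $\lambda$-class case \emph{conditionally} on Conjecture \ref{conj corresp}.

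Your proposal is therefore not a proof but a research outline, and it has genuine gaps at each of its three layers. First, no correspondence theorem of the type you invoke currently exists: \cite{nishinou2020realization} and \cite{blomme2022abelian2} handle only point and FLS constraints, and \cite{mandel2020descendant} treats descendants in the \emph{toric} setting, not abelian surfaces; extending this to arbitrary tautological $\alpha\in H^*(\overline{\M}_{g,n})$ is a substantial open problem, not a routine upgrade. Second, even granting such a correspondence, your claim that tautological classes ``should reduce, via tropical topological recursion relations, to products of psi- and kappa-type classes encoded locally'' is unsupported: no such decomposition is established tropically for abelian targets, and the paper's pearl diagram multiplicities make no reference to psi- or kappa-classes. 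Third, the exponent computation is the crux, and you acknowledge it is the main obstacle; in the paper the exponent $4g-3$ arises from a concrete lattice-index computation (Proposition \ref{proposition curves in diagram point case}) combined with the homogeneity $m_{k\Gamma}=k^{4g-4}m_\Gamma$, and there is no analogue of this computation for general insertions. The passage from $4g-3$ to $3g-3+n-\deg\alpha$ via ``corrective factors from $\varphi_k$'' is asserted, not derived.

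In short: the paper makes no claim to prove this conjecture, and your proposal does not fill that gap.
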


As it allows any cohomology class insertion, this formula is very general, and is compatible with expressions for the invariants computed in \cite{bryan2018curve}. Oberdieck's multiple cover formula was conjectured following the multiple cover formulas for reduced Gromov-Witten invariants of K3 surfaces, which have a slightly more ancient history. The $g=0$ multiple cover formula for the latter goes back to Aspinwall-Morrison and Yau-Zaslow in the 90's and was finally proven in \cite{klemm2010noether}. The multiple cover formula where the class $\alpha$ is chosen to be a $\lambda$-class insertion goes back to Katz-Klemm-Vafa \cite{katz1999m} and was proven by R. Pandharipande and R. Thomas \cite{pandharipande2016katz}. Using the latter families of examples, G. Oberdieck and R. Pandharipande proposed in \cite{oberdieck2016curve} a multiple cover formula for K3 surfaces, which is proven for the divisibility $2$ case in \cite{bae2021curves} by Y. Bae and T-H. Buelles.

Taking $\alpha=1$ (no class from $\M_{g,n}$), $n=g$ and every $\gamma_i$ to be a point insertion, we get formula (ii) in Theorem \ref{theorem multiple cover formulas point}. The relation between tropical curves and pearl diagrams on one side and the correspondence theorem \cite{nishinou2020realization} are enough to prove this particular case of Oberdieck's formula. This approach may provide a new perspective on the tackling of multiple cover formulas. These are also arguments to support the validity of the more general formula.

\subsubsection{Relation to other Gromov-Witten invariants.} In the toric case, P. Bousseau proved in \cite{bousseau2019tropical} that the tropical refined invariants are related to the Gromov-Witten invariants with insertions of $\lambda$-classes, which are the Chern classes of the Hodge bundle. The correspondence is as follows: consider the refined invariant $BG_{g_0,d}(q)$ for genus $g_0$ degree $d$ curves in the projective plane, and set $q=e^{iu}$. Expanding in $u$ yields the series of Gromov-Witten invariants with $3d-1+g_0$ point insertions and a $\lambda_{g-g_0}$ class insertion.

\medskip

If in the conjectured multiple cover formula, we now take $n=g_0$ point insertions, and $\alpha=\lambda_{g-g_0}$. The multiple cover formula from \cite{oberdieck2022gromov} makes the following prediction:
$$\langle \lambda_{g-g_0};\mathrm{pt}^{g_0}\rangle_{g,C}=\sum_{k|C} k^{2g-3+2g_0}\langle \lambda_{g-g_0};\mathrm{pt}^{g_0}\rangle_{g,(C/k)_\mathrm{prim}}.$$
For a fixed choice of $C$ and $g_0$, let
$$\R_{g_0,C}=\sum _{g\geqslant g_0}\langle \lambda_{g-g_0};\mathrm{pt}^{g_0}\rangle_{g,C} u^{2g-2},$$
which is the generating series of the Gromov-Witten invariants with point insertions and a $\lambda$-class. Its first term is $N_{g_0,C}u^{2g_0-2}$. Multiplying by $u^{2g-2}$ and making a sum over all $g\geqslant g_0$ of multiple cover relations, we get
\begin{align*}
\R_{g_0,C}(u) & =\sum_{g\geqslant g_0}\sum_{k|C} k^{2g_0-1}\langle \lambda_{g-g_0};\mathrm{pt}^{g_0}\rangle_{g,(C/k)_\mathrm{prim}}(ku)^{2g-2} \\
 & = \sum_{k|C} k^{2g_0-1}\R_{g_0,(C/k)_\mathrm{prim}}(ku). \\
\end{align*}

Some of these invariants have been computed in \cite{bryan2018curve} in the FLS case. However, as \cite[Proposition 2]{bryan2018curve} allows one to trade the FLS condition to point constraints, we stay in the case of point insertions. The values are already known in the following cases:
\begin{itemize}
\item $\langle \lambda_{g-g_0};\mathrm{pt}^{g_0}\rangle_{g,C}$ for any $g_0$ if $C$ is primitive,
\item $\langle \lambda_{g-2};\mathrm{pt}^{2}\rangle_{g,C}$ for any curve class.
\end{itemize}
Furthermore, for the two above cases, through the change of variable $q=e^{iu}$, one has
$$\R_{g_0,C}(u)=(-1)^{g_0-1}R_{g_0,C}(q),$$
and values are compatible with the multiple cover formula in the second case.

This suggests that \cite{bousseau2019tropical} can be generalized to the realm of abelian surfaces, where the tropical multiple cover formula for refined invariants should be equivalent to the multiple cover formula for Gromov-Witten invariants with insertions of $\lambda$-classes. This also suggests that the generalization of P. Bousseau's correspondence result \cite{bousseau2019tropical} to the abelian surface case should use the refinement $\sum_{k|\delta_\Gamma}k^{2g_0-2}\varphi(k)\prod_V(q^{m_V/2k}-q^{-m_V/2k})$ of $\delta_\Gamma\prod m_V$ rather than the naive refinement $\delta_\Gamma\prod(q^{m_V/2}-q^{-m_V/2})$ used in \cite{blomme2022abelian1,blomme2022abelian2}, leading to the following conjecture.

\begin{conj}\label{conj corresp}
Through the change of variable $q=e^{iu}$, one has
$$\R_{g_0,C}=(-1)^{g_0-1} R_{g_0,C}(q).$$
\end{conj}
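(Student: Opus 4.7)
The plan is to reduce the conjectured identity to the primitive case and verify it there, exploiting the fact that both sides satisfy formally identical multiple cover relations. On the tropical side, Theorem \ref{theorem multiple cover formulas point}(iv) yields
$$R_{g_0,\left(\begin{smallmatrix} d_1 & 0 \\ 0 & d_2 \end{smallmatrix}\right)}(q) = \sum_{k|\mathrm{gcd}(d_1,d_2)} k^{2g_0-1}\, R_{g_0,\left(\begin{smallmatrix} 1 & 0 \\ 0 & d_1 d_2/k^2 \end{smallmatrix}\right)}(q^k).$$
On the Gromov-Witten side, Oberdieck's multiple cover formula (Conjecture \ref{conj oberdieck formula}) specialized to $\alpha=\lambda_{g-g_0}$ and $n=g_0$ point insertions predicts
$$\R_{g_0,C}(u) = \sum_{k|C} k^{2g_0-1}\, \R_{g_0,(C/k)_\mathrm{prim}}(ku).$$
Under $q=e^{iu}$ one has $q^k = e^{iku}$, so the two relations have the same shape: the substitution $q \mapsto q^k$ on the refined side matches $u \mapsto ku$ on the $\lambda$-class side. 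Granting Oberdieck's formula, the conjecture is thus equivalent to its primitive specialization
$$\R_{g_0,C_\mathrm{prim}}(u) = (-1)^{g_0-1}\, R_{g_0,C_\mathrm{prim}}(q).$$

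For the primitive case, the pearl diagram algorithm of this paper becomes essentially trivial: as noted after Theorem \ref{theorem diagram enumeration g points}, in classes of the form $\left(\begin{smallmatrix} 1 & 0 \\ 0 & n \end{smallmatrix}\right)$ the enumeration reduces to the explicit closed forms already obtained in \cite{bryan1999generating}. One computes $R_{g_0,C_\mathrm{prim}}(q)$ directly from the diagram sum, noting that since $\delta_\Gamma=1$ for every contributing tropical curve, only the $k=1$ term of the refined multiplicity $\sum_{k|\delta_\Gamma} k^{2g_0-2}\varphi(k)\prod_V(q^{m_V/2k}-q^{-m_V/2k})$ survives, so the invariant collapses to a Block-G\"ottsche-type count. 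One then matches this against the generating series $\R_{g_0,C_\mathrm{prim}}(u)$ of $\lambda$-class Gromov-Witten invariants, whose values in the primitive case are computed in \cite{bryan2018curve}. The sign $(-1)^{g_0-1}$ arises exactly as in Bousseau's toric correspondence, from the parity of the virtual dimension and the substitution $q^{1/2}-q^{-1/2}= 2i\sin(u/2)$.

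The principal obstacle is that Oberdieck's formula is itself conjectural, so to make the reduction unconditional one must prove the $\alpha=\lambda_{g-g_0}$ case of Conjecture \ref{conj oberdieck formula} by independent means. A natural strategy is to imitate the proof of Theorem \ref{theorem multiple cover formulas point} at the level of Gromov-Witten theory: degenerate the abelian surface in a stretched family and express the $\lambda$-class invariants as a sum over decorated pearl diagrams, then extract the multiple cover relation from the same convolution identity on diagram multiplicities that drives the tropical proof. Whether the $\lambda$-class insertions can be tracked through this degeneration is precisely the analogue of the argument in \cite{bousseau2019tropical}, and constitutes the main technical difficulty.

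A more ambitious alternative would be to generalize Bousseau's correspondence itself to the abelian setting, realizing each vertex factor $q^{m_V/2k}-q^{-m_V/2k}$ as a generating series of local log Gromov-Witten invariants with $\lambda$-class insertions, and interpreting the weighted sum $\sum_{k|\delta_\Gamma}k^{2g_0-2}\varphi(k)(\cdots)$ as the abelian analogue arising from the extra automorphisms of unmarked loops wrapping the torus. This would simultaneously prove the primitive case of the conjecture, and give a structural reason why the refinement with the factor $k^{2g_0-2}\varphi(k)$ \emph{had} to replace the naive refinement of \cite{blomme2022abelian1,blomme2022abelian2} in order to encode $\lambda$-class Gromov-Witten data.
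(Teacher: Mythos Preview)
The statement is a \emph{conjecture} in the paper; the paper does not prove it and offers no proof to compare against. What the paper does is present Theorem~\ref{theorem multiple cover formulas point}(iv) as \emph{evidence} for Conjectures~\ref{conj oberdieck formula} and~\ref{conj corresp}: the tropical multiple cover formula for $R_{g_0,C}$ has the same shape as the conjectural one for $\R_{g_0,C}$, and the primitive case matches by the explicit computations of \cite{bryan2018curve}. Your proposal essentially reconstructs this same heuristic discussion.

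Your argument is not a proof, and you correctly identify why. The reduction to the primitive case relies on Oberdieck's multiple cover formula for $\lambda$-class insertions, which is itself open (Conjecture~\ref{conj oberdieck formula}). Without it, the structural match between the two multiple cover relations does not propagate the primitive identity to arbitrary classes: you would need to know independently that $\R_{g_0,C}$ satisfies the predicted relation, and that is precisely what is not established. Your two suggested workarounds --- proving the $\lambda$-class case of Oberdieck's formula via a degeneration/diagram argument, or extending Bousseau's log Gromov--Witten correspondence \cite{bousseau2019tropical} to abelian surfaces --- are reasonable research programmes and indeed are the directions the paper itself hints at, but neither is carried out, and both are substantial open problems. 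In summary, you have accurately recovered the paper's motivation for stating the conjecture, but not a proof; none exists in the paper either.
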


The multiple cover formula for refined invariants (Theorem \ref{theorem multiple cover formulas point} (iv)) can be seen as evidence for both Conjectures \ref{conj oberdieck formula} and \ref{conj corresp} in the case of $\lambda$-class insertions.

\subsubsection{Regularity results.} Using the multiple cover formulas, it is possible to prove the following regularity results. The first is about the generating series of the invariants. Let $g$ and $d$ be fixed positive integers. We consider the generating series for the invariants $N_{g,(d,dn)}$ and $N^{FLS}_{g,(d,dn)}$:
$$F_{g,d}(y)=\sum_{n=1}^\infty N_{g,(d,dn)}y^n, \ F^{FLS}_{g,d}(y)=\sum_{n=1}^\infty N^{FLS}_{g,(d,dn)}y^n.$$
The generating series $F_{g,1}$ and $F^{FLS}_{g,1}$ have already been computed in \cite{bryan1999generating}. As they are a polynomial in $DG_2(y)$, with $G_2$ the first Eisenstein series and $D=y\frac{\dd}{\dd y}$, they are quasi-modular forms. Quasi-modularity is a desirable property since it for instance allows for a strong control of the coefficients. We generalize this result to the generating series for the non-primitive case.

\begin{theom}\ref{theorem quasimodularity}
The series $F_{g,d}$ and $F^{FLS}_{g,d}$ are quasi-modular forms for some finite index subgroup of $SL_2(\ZZ)$ containing $\left(\begin{smallmatrix} 1 & 1 \\ 0 & 1 \\ \end{smallmatrix}\right)$ and they have the following expressions:
\begin{align*}
F_{g,d}(e^{2i\pi\tau}) & =g\sum_{\substack{k|d \\ 0\leqslant l < k^2 }}\left(\frac{d}{k}\right)^{4g-3} \big(DG_2(e^{2i\pi(\tau+l)/k^2})\big)^{g-1}, \\
F^{FLS}_{g,d}(e^{2i\pi\tau}) & =\sum_{\substack{k|d \\ 0\leqslant l < k^2 }}\left(\frac{d}{k}\right)^{4g-1} \big( DG_2(e^{2i\pi(\tau+l)/k^2})\big)^{g-2}D^2G_2(e^{2i\pi(\tau+l)/k^2}). \\
\end{align*}
\end{theom}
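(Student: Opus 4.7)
The plan is to reduce everything to the primitive case via the multiple cover formulas, apply the primitive-class computation of Bryan--Leung \cite{bryan1999generating}, and then harvest quasi-modularity from the well-known quasi-modularity of $G_2$ on $SL_2(\ZZ)$. I sketch the argument for $F_{g,d}$; the FLS series is treated identically using Theorem \ref{theorem multiple cover formula linear case} in place of Theorem \ref{theorem multiple cover formulas point} (ii).

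Starting from the multiple cover formula
$$N_{g,(d,dn)} \;=\; \sum_{k\mid d} k^{4g-3}\, N_{g,(1,\,d^2 n/k^2)},$$
I would sum against $y^n$ and swap the two summations to obtain
$$F_{g,d}(y) \;=\; \sum_{k\mid d} k^{4g-3}\, \sum_{n\geqslant 1} N_{g,(1,(d/k)^2 n)}\, y^n.$$
The inner sum is the $(d/k)^2$-section of $F_{g,1}$, which one extracts by the standard root-of-unity averaging
$$\sum_{n\geqslant 1} a_{Nn}\, e^{2i\pi n\tau} \;=\; \frac{1}{N}\sum_{l=0}^{N-1} F\bigl(e^{2i\pi(\tau+l)/N}\bigr),\qquad F(v) = \sum_{m\geqslant 1} a_m v^m,$$
applied with $F=F_{g,1}$ and $N=(d/k)^2$. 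After re-indexing the outer sum via $k\leftrightarrow d/k$, this expresses $F_{g,d}(e^{2i\pi\tau})$ as a weighted sum, indexed by the pairs $(k,l)$ with $k\mid d$ and $0\leqslant l<k^2$, of the values $F_{g,1}\bigl(e^{2i\pi(\tau+l)/k^2}\bigr)$.

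Next, I would substitute the Bryan--Leung primitive formulas
$$F_{g,1}(q) \;=\; g\,(DG_2(q))^{g-1},\qquad F^{FLS}_{g,1}(q) \;=\; (DG_2(q))^{g-2}\, D^2G_2(q),$$
into this reduction. Tidying the combinatorial constants produces the two explicit expressions stated in the theorem. Quasi-modularity is then immediate: $G_2$ is quasi-modular of weight $2$ on $SL_2(\ZZ)$, and the operator $D = q\, d/dq = (2i\pi)^{-1} d/d\tau$ preserves the ring of quasi-modular forms, so $DG_2$ and $D^2G_2$ are themselves quasi-modular. The pullback along $\tau\mapsto (\tau+l)/k^2$ sends a quasi-modular form on $SL_2(\ZZ)$ to a quasi-modular form on a congruence subgroup of level dividing $k^2$; the sum over $0\leqslant l<k^2$ restores invariance under the translation $T=\left(\begin{smallmatrix}1&1\\0&1\end{smallmatrix}\right)$; and intersecting over the finitely many $k\mid d$ still yields a finite-index subgroup of $SL_2(\ZZ)$ containing $T$, which is precisely the quasi-modularity claim.

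The main obstacle, though essentially routine, is the bookkeeping: keeping track of the $k^2$ factors introduced by the multisection and matching the exponents of $k$ and $d/k$ against the shape asserted in the theorem. Once the primitive computations of \cite{bryan1999generating} and the multiple cover formulas of Theorems \ref{theorem multiple cover formulas point} and \ref{theorem multiple cover formula linear case} are in hand, the argument is mechanical.
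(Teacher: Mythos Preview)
Your proposal is correct and follows essentially the same route as the paper: reduce to the primitive case via the multiple cover formula, extract the $k^2$-multisection of $F_{g,1}$ by root-of-unity averaging, plug in the Bryan--Leung expressions $F_{g,1}=g(DG_2)^{g-1}$ and $F^{FLS}_{g,1}=(DG_2)^{g-2}D^2G_2$, and deduce quasi-modularity from that of $G_2$ together with Lemma~\ref{lemma uasi-modularity transformation} (your pullback-and-intersect argument is exactly that lemma). Your caution about the $1/N$ factor in the averaging is well placed --- track it carefully when matching the constants in the stated formula.
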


We finish proving an analog of the result of E. Brugall\'e and A. Puentes \cite{brugalle2020polynomiality} in the abelian setting. They prove a polynomiality statement for the refined invariants of Hirzebruch surfaces and some weighted projective spaces. In the case of the projective plane, the result is as follows: for any $g$ and $p$, the coefficient of codegree $p$ $\langle BG_{g,d}(q)\rangle_p$ of the refined invariant $BG_{g,d}(q)$ is a polynomial in $d$ for $d$ big enough. In the abelian setting, we prove the following.

\begin{theom}\ref{theorem polynomiality}
The coefficient $\langle R_{g,(d_1,d_2)}\rangle_p$ of codegree $p$ in $R_{g,(d_1,d_2)}$ is a polynomial in $d_1 d_2$ for $n>\max(g-1,2p)$.
\end{theom}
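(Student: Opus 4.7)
The plan is to combine the multiple cover formula of Theorem~\ref{theorem multiple cover formulas point}(iv) with a direct analysis of the primitive case via the pearl diagram algorithm.

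\textbf{Reduction to the primitive case.} We apply
$$R_{g,(d_1,d_2)}(q) = \sum_{k \mid \mathrm{gcd}(d_1,d_2)} k^{2g-1}\, R_{g,(1,\, d_1 d_2/k^2)}(q^k)$$
and compare top degrees. Writing $T(n)$ for the top $q$-degree of $R_{g,(1,n)}(q)$, the refined invariant of a pearl diagram in bidegree $(1,n)$ contributes a product $\prod_V(q^{m_V/2}-q^{-m_V/2})$ whose top degree $\tfrac12\sum_V m_V$ grows linearly in $n$ by balancing; hence $T(n) = \alpha n + O(1)$ for some $\alpha > 0$. The $k$-th summand of the multiple cover formula has top $q$-degree $k\cdot T(d_1d_2/k^2) = \alpha d_1 d_2 /k + O(k)$, which is strictly below the overall top degree $T(d_1d_2)$ by an amount of order $d_1 d_2(1-1/k) \geq d_1d_2/2$ for $k\geq 2$. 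Therefore, as soon as $n := d_1 d_2 > 2p$, the $k\geq 2$ summands contribute trivially to the codegree $p$ coefficient, and
$$\langle R_{g,(d_1,d_2)}\rangle_p = \langle R_{g,(1,d_1d_2)}\rangle_p.$$

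\textbf{Polynomiality in the primitive case.} It remains to show that $\langle R_{g,(1,n)}\rangle_p$ is a polynomial in $n$ for $n>g-1$. Since the class $(1,n)$ is primitive, every tropical curve realizing it has $\delta_\Gamma=1$, so the $R$-refinement $\sum_{k\mid\delta_\Gamma} k^{2g-2}\varphi(k)\prod_V(q^{m_V/2k}-q^{-m_V/2k})$ collapses to the Block--G\"ottsche multiplicity and $R_{g,(1,n)}=BG_{g,(1,n)}$. The pearl diagrams of bidegree $(1,n)$ admit an especially simple description in the primitive case, recovering on the classical side the Bryan--Leung expression $F_{g,1}(y)=g\,(DG_2(y))^{g-1}$. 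A codegree $p$ contribution requires the vertex excess $\sum_V(m_V-1)$ to be bounded by $2p$, so only finitely many combinatorial types of pearl diagrams contribute. For each such type, the remaining data are the cyclic positions of the $g$ marked points along the long circular direction of $\TT A$ and the lengths of the horizontal edges; their enumeration becomes a genuine polynomial function of $n$ as soon as $n>g-1$, when the $g$ point constraints have room to be placed in general cyclic position without collisions.

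\textbf{Main obstacle.} The technical heart of the proof lies in this primitive-case enumeration, which mirrors the argument of Brugall\'e--Jaramillo Puentes~\cite{brugalle2020polynomiality} in the toric setting. One must verify that, for each codegree-$\leq p$ combinatorial type in bidegree $(1,n)$, the counting function in $n$ is genuinely polynomial rather than merely quasi-polynomial; the threshold $n>g-1$ is precisely what ensures that the $g$ point constraints along the circle do not interfere with the discrete choices in the diagram, while the threshold $n>2p$ from the first step controls the contributions of non-trivial divisors in the multiple cover formula. Combining the two yields the condition $n = d_1 d_2 > \max(g-1,2p)$ stated in the theorem.
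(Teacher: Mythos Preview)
Your reduction to the primitive case via the multiple cover formula is essentially the paper's argument, although you can be sharper: the paper already knows that $R_{g,(1,n)}$ has degree exactly $n$ (Proposition in Section~\ref{section computation primitive case}), so the $k$-th summand has degree $d_1d_2/k$ on the nose, and for $k\geq 2$ its codegree relative to the top is at least $d_1d_2/2>p$. No asymptotic $\alpha n+O(1)$ is needed.

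The genuine gap is in the primitive case. You correctly identify it as the ``main obstacle'' and then do not resolve it: appealing to a Brugall\'e--Jaramillo-Puentes style finiteness of combinatorial types is not a proof, and the claim that the counting function is polynomial ``rather than merely quasi-polynomial'' is precisely what has to be shown. The paper's route is much more direct and avoids any diagram enumeration. It uses the explicit closed formula
\[
R_{g,(1,n)} \;=\; g\sum_{a_1+\cdots+a_{g-1}=n}\ \prod_{i=1}^{g-1}\Bigl(\sum_{k\mid a_i} k\,[a_i/k]_-^2\Bigr),
\]
obtained from the primitive pearl diagrams in Section~\ref{section computation primitive case}. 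Each factor $\sum_{k\mid a}k[a/k]_-^2$ has top degree $a$ with leading coefficient $1$, and its next nonzero term sits at degree $\leq a/2$ (coming from $k\geq 2$); hence for $a>2p$ it contributes nothing in codegrees $1,\dots,p$. Dispatching $p=p_1+\cdots+p_{g-1}$ among the factors, every index $i$ with $p_i\neq 0$ is forced to have $a_i\leq 2p$, leaving finitely many choices; the remaining $a_j$ contribute only their leading coefficient $1$, and the number of compositions of the residual integer into positive parts is a binomial coefficient, which is a polynomial in $n$. That is the whole argument.

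Your interpretation of the threshold $n>g-1$ is also off: it has nothing to do with placing $g$ marked points ``without collisions'' on the circle. It is simply the condition that compositions $a_1+\cdots+a_{g-1}=n$ with $a_i\geq 1$ exist, so that the degree of $R_{g,(1,n)}$ really is $n$ and ``codegree $p$'' is meaningful.
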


The result from \cite{brugalle2020polynomiality} may be seen as adual version to the G\"ottsche conjecture \cite{gottsche1998conjectural} (proved in \cite{tzeng2010proof} and \cite{kool2011short}). The latter asserts that the number of curves with a prescribed number of nodes $\delta$ in a linear system $L$ on a surface $S$ passing through a given number of points is a polynomial, provided the linear system is sufficiently ample. For fixed genus, we do not have polynomiality, but thanks to \cite{brugalle2020polynomiality}, we recover some polynomiality when considering the refined invariants. Furthermore, G\"ottsche's conjecture asserts more: the polynomial is in some sense universal as it only depends on the numbers $c_2(S)$, $K_S^2$, $L^2$ and $L\cdot K_S$ of the pair $(S,L)$. The obtained polynomials are known as \textit{node polynomials}. Such an universality property for polynomials from \cite{brugalle2020polynomiality} remains out of reach due to a lack of definition of the refined invariants in a general setting. Expanding the range of examples is a first step to hint a dual G\"ottsche conjecture.

\subsection{Structure of the paper}

The paper is organized as follows. The second section is independent of the rest of the paper and contains considerations on the arithmetic convolution, some generalities on quasi-modular forms, and the definition of pearl diagrams, which are some decorated graphs. The third section gathers result from \cite{blomme2022abelian1,blomme2022abelian2} to recall the definition of the tropical invariants we compute. Fourth and fifth sections constitute the core of the paper and explain the correspondence between tropical curves and pearl diagrams, and how to use them to prove the multiple cover formulas. In the last section, we recover the expression of the invariants for primitive classes using tropical geometry, prove the quasi-modularity of the generating series and polynomiality for coefficients of refined invariants.

\medskip

\textit{Acknowledgments.} The author is grateful to R. Pandharipande and G. Oberdieck for pointing out the existence of Oberdieck's multiple cover formula as well as helpful discussions on the matter. The author also thanks Ilia Itenberg for reviewing a first version of the paper, and the anonymous referee for helpful remarks that improved greatly the paper. Research was supported in part by the SNSF grant 204125.

\section{Little toolbox}

\subsection{A small disgression on the arithmetic convolution}
\label{section convolution}

\subsubsection{Classical convolution.} The classical \textit{arithmetic convolution} is defined as follows: for $u,v:\NN^*\to\RR$ functions, we set
$$(u\ast v)(n)=\sum_{k|n} u(k) v(n/k).$$
This is an associative operation, with neutral element the Dirac function that takes value $1$ at $1$ and $0$ elsewhere. A function is invertible if and only if its value at $1$ is non-zero. We say that a function $u$ is multiplicative if whenever $\mathrm{gcd}(n,m)=1$ we have $u(nm)=u(n)u(m)$. Such a function needs only to be defined on powers of prime numbers. The set of multiplicative functions is stable by convolution. We introduce the following multiplicative functions:
\begin{itemize}
\item the sum of divisors $\sigma_1(n)=\sum_{d|n}d$, and in particular $\sigma_1(p^r)=1+p+\cdots+p^r=\frac{p^{r+1}-1}{p-1}$,\\
\item the power functions $\epsilon_\alpha(n)=n^\alpha$, in particular $\epsilon_0=\mathds{1}$ is constant equal to $1$ and $\epsilon_1=\mathrm{id}$,
\item the Euler function $\varphi(n)=\sum_{\mathrm{gcd}(d,n)=1} 1$, and in particular $\varphi(p^r)=p^{r-1}(p-1)$, but also $\sum_{k|n}\varphi(k)=n$, or in other terms $\varphi\ast\mathds{1}=\mathrm{id}$.

\end{itemize}

\subsubsection{Deformed convolution.} We consider the following variation on the arithmetic cnvolution. If we now consider functions $U,V:\NN^*\to\ZZ[q^{\pm 1/2}]$ with value in the ring of Laurent polynomials, we set:
$$(U\ast V)_n(q)=\sum_{k|n}U_k(q)V_{n/k}(q^k).$$
This operation is not commutative anymore, but it can be checked that it is still associative. This deformation restricts to the usual arithmetic convolution on the set of constant polynomials.

\subsubsection{$\NN^*$-action and convolution.} We finish with a last variant of the arithmetic convolution, which is a way to define a module structure on some space of functions. Let $X$ be a set endowed with an action of the multiplicative monoid $\NN^*$. In the paper, $X$ is taken to be the set of curve classes, or the set of pearl diagrams. We say that an element $x\in X$ is divisible by $n\in\NN^*$ if there is a $x'\in X$ such that $x=n\cdot x'$. We assume that each element of $x$ has only a finite number of divisors, and that the maps $x\to n\cdot x$ are injective, so that we can speak of $x/n$ if $x$ is divisible by $n$. Let $u:\NN^*\to \RR$ be an arithmetic function, and $F:X\to \RR$ be a function defined on $X$. We define the convolution $u\ast F:X\to\RR$ of $u$ with $F$ as follows:
$$(u\ast F)(x)=\sum_{n|x}u(n)F(x /n).$$
This multiplication is associative in the sense that $(u\ast v)\ast F=u\ast(v\ast F)$. We proceed similarly if the codomain is $\ZZ[q^{\pm 1/2}]$ rather than $\RR$ to deform the convolution: if $F:X\to\ZZ[q^{\pm 1/2}]$, we set
$$(u\ast F)(x)(q)=\sum_{n|x}u(n)F(x /n)(q^n).$$

\subsection{About quasi-modular forms}

We here give some elementary results about quasi-modular forms and we refer to the litterature for a more complete introduction on the subject. See for instance \cite{serre1970cours} or \cite{gunning2016lectures}.

\subsubsection{Modular forms for $SL_2(\ZZ)$.} Modular forms are holomorphic functions on the Poincar\'e half-plane $\mathbb{H}=\{z\in\CC:\im z>0\}$ that satisfy some transformation property under the action of $SL_2(\ZZ)$ by homography. The latter are maps of the form $\tau\mapsto\frac{a\tau+b}{c\tau+d}$. More precisely, a function $f:\mathbb{H}\to\CC$ is a modular form of weight $2k$ if for any $\left(\begin{smallmatrix} a & b \\
c & d \\
\end{smallmatrix}\right)\in SL_2(\ZZ)$ we have
$$f\left(\frac{a\tau+b}{c\tau+d}\right)=(c\tau+d)^{2k}f(\tau),$$
and is bounded as $\im\tau$ goes to $\infty$. As these functions are $1$-periodic, setting $y=e^{2i\pi\tau}$, they can be expressed as series in this new variable $y$. It can be shown that modular forms for $SL_2(\ZZ)$ are homogeneous polynomials in the Eisenstein series $G_4(y)$ and $G_6(y)$ (See \cite[Theorem 5 chapter IV]{gunning2016lectures}).

\subsubsection{Quasi-modular forms for $SL_2(\ZZ)$.} Quasi-modular forms are functions that satisfy a different transformation law under the action of $SL_2(\ZZ)$. It can be shown that quasi-modular forms are exactly the polynomials in the Eisenstein series $G_2$, $G_4$ and $G_6$. Moreover, they form a ring which is stable by the differential operator $D=y\frac{\dd}{\dd y}$. Quasi-modularity is a desirable property since it allows for a rather strong control of the coefficients. For instance, they are at most of polynomial growth.

\subsubsection{Modularity for congruence subgroups.} The above considerations generalize as follows: the transformation law may only be satisfied for a finite index subgroup $G$ of $SL_2(\ZZ)$, in which case we say that the function $f$ is a (quasi-)modular form for $G$. Examples of such subgroups are provided by the congruence subgroups:
$$\Gamma(N)=\left\{ \left(\begin{smallmatrix} a & b \\ c & d \\\end{smallmatrix}\right)\in SL_2(\ZZ) \mid  \left(\begin{smallmatrix} a & b \\ c & d \\\end{smallmatrix}\right)\equiv \left(\begin{smallmatrix} 1 & 0 \\ 0 & 1 \\\end{smallmatrix}\right)\ \mathrm{mod}\ N\right\}.$$
Conversely, for a given function $f$ that is quasi-modular for some subgroup of $SL_2(\ZZ)$, we denote by $\Gamma(f)$ the maximal subgroup of matrices that satisfy the law transformation.

\subsubsection{Constructing new quasi-modular forms out of old ones.} We have the following lemma which tells how affine transformations affect the modular group.

\begin{lem}\label{lemma uasi-modularity transformation}
Let $f$ be a (quasi-)modular form for some subgroup $\Gamma(f)$ of $SL_2(\ZZ)$. Then for any choice of $(r,s)\in\QQ^*\times\QQ$, the function $g(\tau)=f(r\tau+s)$ is (quasi-)modular and $\Gamma(g)$ contains $\left(\begin{smallmatrix}
1/r & -s/r \\
0 & 1 \\
\end{smallmatrix}\right)\Gamma(f)\left(\begin{smallmatrix}
r & s \\
0 & 1 \\
\end{smallmatrix}\right)\cap SL_2(\ZZ)$.
\end{lem}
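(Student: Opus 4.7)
The plan is to realize the affine substitution $\tau\mapsto r\tau+s$ as the homographic action of the matrix $A=\left(\begin{smallmatrix} r & s \\ 0 & 1 \end{smallmatrix}\right)\in GL_2^+(\QQ)$, so that $g=f\circ A$. I shall tacitly assume $r>0$ so that $A$ preserves $\HH$ (the case $r<0$ can be reduced to this one via the involution $\tau\mapsto-\bar\tau$). The inverse matrix is $A^{-1}=\left(\begin{smallmatrix} 1/r & -s/r \\ 0 & 1 \end{smallmatrix}\right)$, as already appears in the statement of the lemma.

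Next I would take $M=\left(\begin{smallmatrix} a & b \\ c & d \end{smallmatrix}\right)\in SL_2(\ZZ)$ and compute $g(M\cdot\tau)=f((AM)\cdot\tau)$. The decisive observation is the factorisation $AM=(AMA^{-1})\cdot A$, so that on setting $M'=AMA^{-1}=\left(\begin{smallmatrix} a' & b' \\ c' & d' \end{smallmatrix}\right)$ we obtain
\[
g(M\cdot\tau)=f(M'\cdot(A\cdot\tau)).
\]
Provided $M'$ lies in $\Gamma(f)$, applying the (quasi-)modular transformation law of $f$ at the point $A\cdot\tau$ under $M'$ produces an automorphy factor depending on $c'(A\cdot\tau)+d'$.

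The heart of the proof is then a direct matrix computation: multiplying out $AMA^{-1}$ yields $c'=c/r$ and $d'=d-sc/r$, whence
\[
c'(A\cdot\tau)+d'=\tfrac{c}{r}(r\tau+s)+d-\tfrac{sc}{r}=c\tau+d.
\]
This one-line identity is the crux: the automorphy factor of $f$ at $A\cdot\tau$ under $M'$ coincides with the automorphy factor one wants for $g$ at $\tau$ under $M$. Substituting back, we recover the full (quasi-)modular transformation law of $g$ with the same weight $2k$ (and, in the quasi-modular case, with the same depth; the auxiliary functions appearing in the expansion transform into $f_j(A\cdot\tau)/r^j$, and the factors $c/(c\tau+d)$ produced by the transformation of $f$ become $c'/(c'(A\cdot\tau)+d')=c/(r(c\tau+d))$, which still yields the required polynomial expansion in $c/(c\tau+d)$). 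Thus every $M\in SL_2(\ZZ)$ with $AMA^{-1}\in\Gamma(f)$ lies in $\Gamma(g)$, which is exactly the claimed inclusion $A^{-1}\Gamma(f)A\cap SL_2(\ZZ)\subseteq\Gamma(g)$.

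To finish I would check the remaining requirements: holomorphy of $g$ on $\HH$ is immediate from holomorphy of $f$ and the fact that $A$ preserves $\HH$; boundedness as $\im\tau\to\infty$ follows from $\im(r\tau+s)=r\,\im\tau$; and finite-indexness of $\Gamma(g)$ in $SL_2(\ZZ)$ follows because $\Gamma(f)\supseteq\Gamma(N)$ for some $N$, whence $A^{-1}\Gamma(N)A\cap SL_2(\ZZ)\supseteq\Gamma(N')$ for any $N'$ sufficiently divisible by $N$ and by the denominators of the entries of $A$. The main potential obstacle is purely bookkeeping, namely carrying the quasi-modular expansion through the conjugation by $A$; but this is transparent once the identity $c'(A\cdot\tau)+d'=c\tau+d$ is in hand, so the proof amounts essentially to this one clean computation.
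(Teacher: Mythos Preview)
Your proof is correct and follows essentially the same route as the paper's: both realize the affine shift as the homography of $A=\left(\begin{smallmatrix} r & s \\ 0 & 1 \end{smallmatrix}\right)$, conjugate $M$ by $A$, and reduce everything to the identity $c'(A\cdot\tau)+d'=c\tau+d$. You add a few details the paper omits (the sign of $r$, the quasi-modular bookkeeping, and the finite-index check via congruence subgroups), but the argument is the same.
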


\begin{proof}
We prove the result for modular forms. The result for quasi-modular forms is obtained with the same manipulations. Assume $f$ is a modular form for $\Gamma(f)$ of weight $2k$. We have the following equality in $SL_2(\RR)$:
$$\begin{pmatrix}
r & s \\
0 & 1 \\
\end{pmatrix}\begin{pmatrix}
a & b \\
c & d \\
\end{pmatrix}\begin{pmatrix}
1/r & -s/r \\
0 & 1 \\
\end{pmatrix}=\begin{pmatrix}
a+\frac{r}{s}c & rb+s(d-a)-\frac{s^2 }{r}c \\
\frac{c}{r} & d-\frac{s}{r}c \\
\end{pmatrix}.$$
If $\left(\begin{smallmatrix} a & b \\ c & d \\ \end{smallmatrix}\right) \in\left(\begin{smallmatrix}
1/r & -s/r \\
0 & 1 \\
\end{smallmatrix}\right)\Gamma(f)\left(\begin{smallmatrix}
r & s \\
0 & 1 \\
\end{smallmatrix}\right)$, then the above product lies in $\Gamma(f)$ and we can apply the modular transformation law for $f$:
\begin{align*}
g\left(\frac{a\tau+b}{c\tau+d}\right) & = f\left(r\frac{a\tau+b}{c\tau+d}+s\right) \\
& = f\left( \begin{pmatrix}
r & s \\
0 & 1 \\
\end{pmatrix}\begin{pmatrix}
a & b \\
c & d \\
\end{pmatrix}\begin{pmatrix}
1/r & -s/r \\
0 & 1 \\
\end{pmatrix}\cdot(r\tau+s)\right) \\
& = \left( \frac{c}{r}(r\tau+s)+d-\frac{s}{r}c\right)^{2k}f(r\tau+s) \\
& = (c\tau+d)^{2k}g(\tau).\\
\end{align*}
Thus, $g$ is a modular form of weight $2k$ and its modular group contains the elements of the above subgroup.
\end{proof}

\subsection{Abstract marked pearl diagrams}
\label{section pearl diagrams definition}

Pearl diagrams are similar to floor diagrams introduced in \cite{blomme2021floor} and also to the diagrams considered in \cite{blomme2024bielliptic} or \cite{bohm2020counts}. The main difference with \cite{blomme2021floor} is that the stretching direction loops on itself.

\begin{defi}\label{def-pearl-diagram}
A \textit{pearl diagram} $\Dfk$ (or just \textit{diagram}) is a connected oriented weighted labeled graph with the following properties:
	\begin{enumerate}[label=(\alph*)]
	\item Vertices are labeled by elements of $\{1,\dots,g\}$, and split into two disjoint families:
		\begin{itemize}
		\item the set $V_\circ(\Dfk)$ of \textit{flat vertices}, which are bivalent and denoted with a circle $\circ$ when needed,
		\item the set $V_\square(\Dfk)$ of \textit{pearls}, not necessarily bivalents which are assigned a degree $d_\pfk\in\ZZ_{>0}$, denoted by a square $\square$ when needed.
		\end{itemize}
	\item Each edge $e$ is assigned a weight $w_e\in\ZZ_{>0}$, such that the graph is balanced: sum of incoming weights is equal to sum of outgoing weights.
	\item Each edge $e$ with extremities labeled by $i_e^\pm$ is assigned a length $\ell_e\in\ZZ_{\geqslant 0}$ satisfying the following:
		\begin{itemize}
		\item if $i_e^-<i_e^+$, then $\ell_e\geqslant 0$,
		\item if $i_e^-\geqslant i_e^+$, then $\ell_e>0$.
		\end{itemize}
	\item The complement of flat vertices is connected and without cycle.
	\end{enumerate}
\end{defi}

Choose $0<x_1<\cdots<x_g<1$ yielding $g$ points in $\RR/\ZZ$. The choice of a pearl diagram $\Dfk$ induces a map $\pi_\Dfk:\Dfk\to\RR/\ZZ$ as follows:
	\begin{itemize}
	\item A vertex with label $1\leqslant i\leqslant g$ is mapped to $x_i$.
	\item An edge $e$ with extremities labeled $i_e^-$ and $i_e^+$ and length $\ell_e$ is mapped to the path from $x_{i_e^-}$ to $x_{i_e^+}$ that passes $\ell_e$ times over $0\in\RR/\ZZ$:
	$$t\in [x_{i_e^-}; \ell_e+x_{i_e^+}]\longmapsto t\in\RR/\ZZ.$$
	\end{itemize}
The map $\pi_\Dfk$ is a tropical cover in the sense of \cite[Definition 3.1]{bohm2020counts}: it is a graph map that satisfies the balancing condition. Tropical covers are maps between graphs that satisfy a balancing condition. They were studied in \cite{caporaso2014gonality} and \cite{cavalieri2010tropical}. We now define the genus of a pearl diagram. As this quantity is already hidden in Definition \ref{def-pearl-diagram}, we give it inside a lemma rather than a definition.

\begin{lem}\label{lem-genus-pearl-diagram}
Let $\Dfk$ be a pearl diagram. The genus of the pearl diagram is defined to be $g(\Dfk)=b_1(\Dfk)+|V_\square(\Dfk)|$. It matches the total number of vertices $g$.
\end{lem}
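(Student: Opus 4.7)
The plan is to verify the equality $b_1(\Dfk) = |V_\circ(\Dfk)|$, which combined with the partition $g = |V_\circ(\Dfk)| + |V_\square(\Dfk)|$ immediately gives the claim. Since $\Dfk$ is connected by definition, $b_1(\Dfk) = |E(\Dfk)| - (|V_\circ| + |V_\square|) + 1$, so everything reduces to an edge count.

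The idea is to partition the edges of $\Dfk$ into three sets according to the types of their endpoints: $E_{\circ\circ}$ (between two flat vertices), $E_{\circ\square}$ (mixed), and $E_{\square\square}$ (between two pearls). First I would use the bivalence of flat vertices (condition (a)): summing the local degrees at flat vertices and counting each edge-end once, we get $2|V_\circ| = 2|E_{\circ\circ}| + |E_{\circ\square}|$.

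Next I would interpret condition (d) topologically: the complement $\Dfk \setminus V_\circ(\Dfk)$ is required to be connected and simply connected (acyclic). Removing a flat vertex $v$ detaches its two incident edges at $v$. Any edge in $E_{\circ\circ}$ thus becomes an isolated open interval — a separate connected component of the complement — so connectedness forces $|E_{\circ\circ}| = 0$. Once this is established, the remaining complement deformation retracts onto the induced subgraph on $V_\square$ with edge set $E_{\square\square}$ (each edge in $E_{\circ\square}$ becomes a half-open interval that retracts to its pearl endpoint). Since this subgraph inherits connectedness and acyclicity, it is a tree, giving $|E_{\square\square}| = |V_\square| - 1$.

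Combining these three identities yields $|E| = 2|V_\circ| + |V_\square| - 1$, and therefore
\[
b_1(\Dfk) = |E| - |V_\circ| - |V_\square| + 1 = |V_\circ|,
\]
so $g(\Dfk) = b_1(\Dfk) + |V_\square| = |V_\circ| + |V_\square| = g$. No real obstacle arises beyond making the topological interpretation of ``complement'' rigorous; the only subtle point is ensuring that edges in $E_{\circ\circ}$ are correctly identified as obstructions to connectedness of the complement, which is why the bivalence assumption on flat vertices is essential.
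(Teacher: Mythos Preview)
Your argument is correct and reaches the same key identity $b_1(\Dfk)=|V_\circ(\Dfk)|$ as the paper, but by a more hands-on route. The paper's proof is a three-line Euler-characteristic computation: removing a bivalent vertex from a graph raises $\chi$ by $1$, so $\chi(\Dfk\setminus V_\circ)=(1-b_1(\Dfk))+|V_\circ|$, and since the complement is connected and acyclic this equals $1$. Your edge-type partition $E=E_{\circ\circ}\sqcup E_{\circ\square}\sqcup E_{\square\square}$ unpacks the same information combinatorially; it is more explicit but longer, and the paper's version avoids the case analysis on $E_{\circ\circ}$ altogether.

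One small caveat: your step ``connectedness forces $|E_{\circ\circ}|=0$'' tacitly assumes $V_\square\neq\emptyset$. If there are no pearls at all, a single flat vertex carrying a loop is a valid pearl diagram whose complement is a single open interval---connected, yet with $|E_{\circ\circ}|=1$; your tree identity $|E_{\square\square}|=|V_\square|-1$ would then read $0=-1$. This degenerate $g=1$ situation never arises in the paper (one always has $d_2=\sum_\pfk d_\pfk>0$, hence at least one pearl), and the Euler-characteristic argument handles it uniformly, but for a self-contained proof you should add a one-line exclusion or treat it separately.
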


\begin{proof}
The Euler characteristic of $\Dfk$ is $1-b_1(\Dfk)$. By assumption, the complement of flat vertices is connected and without cycle. Thus, its Euler characteristic is $1$:
$$\left(1-b_1(\Dfk)\right)+|V_\circ(\Dfk)|=1,$$
using the fact that flat vertices are bivalent. We deduce that
$$b_1(\Dfk)+|V_\square(\Dfk)| = |V_\circ(\Dfk)|+|V_\square(\Dfk)|=|V(\Dfk)|=g.$$
\end{proof}

Let $\Dfk$ be a pearl diagram. Its bidegree $(d_1,d_2)$ is the following pair of integers:
	\begin{itemize}
	\item $d_1=\displaystyle\sum_{e\in E(\Dfk)} w_e\ell_e$ is the degree of the tropical cover $\pi_\Dfk$,
	\item $d_2=\displaystyle\sum_{\pfk\in V_\square(\Dfk)} d_\pfk$ is the sum of degrees of pearls.
	\end{itemize}
	We define the gcd $\delta_\Dfk$ of $\Dfk$ to be the gcd of edge weights and pearl degrees:
	$$\delta_\Dfk = \gcd\left( (w_e)_e,(d_\pfk)_\pfk \right).$$
	A diagram is said to be \textit{primitive} if its gcd is $1$. Otherwise, it is possible to write $\Dfk=\delta_\Dfk\cdot \widehat{\Dfk}$, where $\widehat{\Dfk}$ is a primitive diagram, and $\cdot$ denotes the action of $\NN^*$ over the set of diagrams, multiplying the weight of every edge and degree of every pearl by a common integer. Notice that the lengths $\ell_e$ are unchanged.

\begin{expl}
We depict several examples of pearl diagrams on Figure \ref{figure examples pearl diagrams}, up to the choice of $d_\pfk$ for square vertices. The orientation of the graphs goes counterclockwise, and the graph is displayed so that the map to $\RR/\ZZ$ is apparent. The weights are only marked if they are bigger than $2$. The lengths of the edges can be recovered as the number of times they intersect a half-line going away from the center and passing between the marked vertices $g$ and $1$.
	\begin{itemize}
	\item The first diagram has genus $2$, and the degree of the tropical cover is $2$. Both edges have length $1$.
	\item The second diagram has genus $4$ and the degree of te tropical cover is $3$. The edges between $4-1$ and $1-3$ have length $1$.
	\item The last diagram has genus $7$, the tropical cover has degree $3$, and only the edge $7-1$ has length $1$.
	\end{itemize}
\end{expl}

\begin{figure}
\begin{center}
\begin{tabular}{ccc}
\begin{tikzpicture}[line cap=round,line join=round,x=0.75cm,y=0.75cm]
	\pearl (1) at (-150:2) label=1;
	\flatpearl (2) at (-30:2) label=2;
	
	\draw (1) to[out=-60,in=0,looseness=2.5] (90:1) to[out=180,in=-120,looseness=2.5] (2) ;
	\draw (2) to[out=60,in=0] (90:2) to[out=180,in=-240] (1) ;
\end{tikzpicture} & 
\begin{tikzpicture}[line cap=round,line join=round,x=0.75cm,y=0.75cm]
	\pearl (1) at (180:2) label=1;
	\flatpearl (2) at (-90:2) label=2;
	\pearl (3) at (0:2) label=3;
	\flatpearl (4) at (90:2) label=4;
	
	\draw (1) to[out=-90,in=180] (2);
	\draw (2) to[out=0,in=-90] (3);
	\draw (3) to[out=90,in=0] node[midway,above right] {$2$} (4);
	\draw (1) to[out=90,in=180] node[midway,above left] {$2$} (4);
	\draw (1) to[out=-60,in=-90] (0:1.5) to[out=90,in=90] (180:1.5) to[out=-90,in=-120] (3);
	\end{tikzpicture}
&
\begin{tikzpicture}[line cap=round,line join=round,x=0.75cm,y=0.75cm]
	\pearl (1) at (120:3) label=1;
	\pearl (2) at (180:1.5) label=2;
	\flatpearl (3) at (-120:3) label=3;
	\flatpearl (4) at (-90:1.5) label=4;
	\pearl (5) at (-40:3) label=5;
	\pearl (6) at (0:3) label=6;
	\flatpearl (7) at (60:3) label=7;
	\draw (1) to[bend right] node[midway,right] {$2$} (2) ;
	\draw (1) to[out=-130,in=130] (3) ;
	\draw (3) to[bend right] (5) ;
	\draw (2) to[bend right] (4) ;
	\draw (4) to[bend right] (6) ;
	\draw (2) to[out=-90,in=180] (-90:2.3) to[out=0,in=180] (5) ;
	\draw (5) to[bend right]  node[midway,right] {$2$} (6) ;
	\draw (6) to[bend right] node[midway,above] {$3$} (7) ;
	\draw (7) to[bend right]  node[midway,above] {$3$} (1) ;
	\end{tikzpicture} \\
\\
$(a)$ & $(b)$ & $(c)$ \\
\end{tabular}
\caption{\label{figure examples pearl diagrams}Examples of pearl diagrams. The first is of respective genus $2$, $4$ and $7$.}
\end{center}
\end{figure}
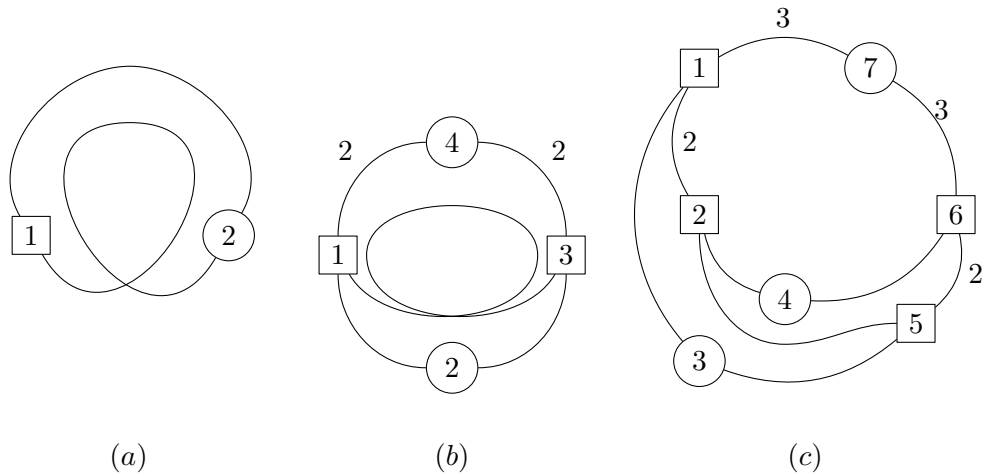

To deal with the invariants for curves in a fixed linear system (FLS), we need the following adaptation of the notion of diagram, which we call \textit{FLS-diagram}. It differs with Definition \ref{def-pearl-diagram} at the two points (a) and (d), replaced by the following:
	\begin{itemize}
	\item[(a')] Vertices are labeled by $\{1,\cdots,g-2,\infty\}$, with the marking $\infty$ lying on a pearl.
	\item[(d')] The complement of flat vertices is connected and contains a unique cycle $\gamma_\Dfk=\sum_e\epsilon_e e$, where $\epsilon_e=0,\pm 1$.
	\end{itemize}
	For a FLS-diagram, its genus $b_1(\Dfk)+|V_\square(\Dfk)|$ is equal to $g$. Let $\Lambda_\Dfk=\sum_e \epsilon_e\ell_e$ be the absolute value of the class realized by $\pi_\Dfk(\gamma_\Dfk)$ inside $\pi_1(\RR/\ZZ)\simeq\ZZ$.

\section{Tropical tori, tropical curves, enumeration and multiplicities}
\label{section tropical invariants}

This section is a reminder of the results proven in \cite{blomme2022abelian1,blomme2022abelian2}, to make the present paper self-contained.

\subsection{Tropical tori and tropical curves}

\subsubsection{Tropical tori.} Let $N$ be a rank $2$ lattice, which may be assumed to be $\ZZ^2$, and $N_\RR=N\otimes\RR$ the associated vector space. Let $\Lambda\subset N_\RR$ be a rank $2$ lattice, which amounts to the choice of a $2\times 2$ matrix, and $S:\Lambda\hookrightarrow N_\RR$ be the inclusion.

\begin{defi}
The quotient $\TT A=N_\RR/\Lambda$ is called a tropical torus.
\end{defi}

\subsubsection{Tropical curves.} An \textit{abstract tropical curve} is a metric graph $\Gamma$. It is \textit{irreducible} if $\Gamma$ is connected. The \textit{genus} of an irreducible abstract tropical curve is its first Betti number. The valency of a vertex is the number of adjacent edges.

\begin{defi}
Let $\TT A=N_\RR/\Lambda$ be a tropical torus. A parametrized tropical curve inside $\TT A$ is a map $h:\Gamma\to\TT A$ where $\Gamma$ is an abstract tropical curve such that:
\begin{itemize}
\item The map $h$ is affine with slope in $N$ on each edge of $\Gamma$. The slope $\partial_e h$ of $h$ along an edge $e$ takes the form $w_e u_e$ where $w_e\in\NN$ is called the weight of the edge, and $u_e\in N$ is some primitive vector.
\item The map $h$ is balanced: at each vertex $V$, one has $\displaystyle\sum_{e\ni V}w_eu_e=0$.
\end{itemize}
The genus of the parametrized curve is the genus of $\Gamma$.
\end{defi}

\begin{rem}
Some authors include the data of a genus at the vertices of the curve. We do not need vertex genus for tropical curves in this paper. The vertex genus was present for the pearls of a pearl diagram.
\end{rem}

Taking each edge $e$ with its slope $\partial_e h$, a parametrized tropical curve realizes a $1$-chain $\sum_e (\partial_e h)e$ in $\TT A$ with coefficients in $N$. Indeed, as the slope is reversed if the orientation is reversed, the quantity $(\partial_e h)e$ does not depend on the choice of orientation of the $e$. The balancing condition states that this $1$-chain is in fact a cycle. The class realized in $H_1(\TT A,N)$ (first homology group with coefficients in $N$) is called the \textit{degree} of the tropical curve. The homology group $H_1(\TT A,N)$ is in fact the tropical homology group $H_{1,1}(\TT A)$ as defined in \cite{itenberg2019tropical}.

Using universal coefficients, we have that $H_1(\TT A,N)\simeq \Lambda\otimes N$, so that the degree of a tropical curve can be seen as a linear map $\Lambda^*\to N$. Choosing basis of $\Lambda$ and $N$, we can consider its matrix $C$. We have a symmetric bilinear form on $\Lambda\otimes N$:
$$\lambda\otimes n,\lambda'\otimes n'\longmapsto \mathrm{det}_\Lambda (\lambda,\lambda')\mathrm{det}_N(n,n'),$$
where $\det_\Lambda$ (resp. $\det_N)$ is the unique skew-symmetric billinear form on $\Lambda$ (resp. $N$), well-defined up to sign, \textit{i.e.} choice of orientation. As $N$ and $\Lambda$ are both lattices in $N_\RR$, if their orientation is induced by the same orientation of $N_\RR$, the above bilinear form is well-defined, and is called the \textit{intersection form} on $H_1(\TT A,N)$. Given two transverse tropical curves in $\TT A$ of respective degrees $C,C'$, the intersection number $C\cdot C'$ is obtained by counting the intersection points $p$ with multiplicity $|\det_N(u_1(p),u_2(p))|$, where $u_1(p),u_2(p)$ are the slopes of the edges of the curves at $p$, as in the tropical Bezout theorem \cite[Section 3.1]{brugalle2014bit}. For an element of $\Lambda\otimes N$ represented by a matrix $C$, its self-intersection is in fact $C^2=|\det C|$.

Having chosen some bases, the image of $\Lambda\hookrightarrow N_\RR$ is spanned by the columns of a matrix $S$. The following propositions give a concrete way to compute the degree of a parametrized tropical curve, and give a condition on the existence of tropical curves of a given class $C$ in the tropical torus $\TT A$.

\begin{prop}
Let $h:\Gamma\to\TT A$ be a parametrized tropical curve. Its degree, viewed as a map $\Lambda^*\to N$ corresponds to the matrix that maps an element $\lambda^*\in\Lambda^*$ to the sum of the slopes of the edges oriented so that they intersect positively a path Poincar\'e dual to $\lambda^*$.
\end{prop}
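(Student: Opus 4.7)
The plan is to unravel what the degree means homologically and then compute it concretely by pairing against a one-dimensional class Poincaré dual to $\lambda^*$. The degree lives in $H_1(\TT A,N)\simeq\Lambda\otimes N$, and I would use that this group is canonically isomorphic to $\mathrm{Hom}(\Lambda^*,N)$ via $\lambda\otimes n\mapsto\bigl(\lambda^*\mapsto \lambda^*(\lambda)\,n\bigr)$. So proving the formula amounts to showing that, for every $\lambda^*\in\Lambda^*$, evaluating the homology class of the $1$-chain $\sum_e(\partial_eh)\,e$ at $\lambda^*$ reproduces the signed sum of slopes claimed in the statement.

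First I would recall that $\TT A$ is a real $2$-torus and that on such a torus we have the Poincaré duality isomorphism $H^1(\TT A,\ZZ)\simeq H_1(\TT A,\ZZ)\simeq\Lambda$. Dualizing gives $\Lambda^*\simeq H^1(\TT A,\ZZ)$, and each $\lambda^*$ is represented by a closed oriented loop $c_{\lambda^*}$ inside $\TT A$ (explicitly, one can lift $\lambda^*$ to a linear form $\ell:N_\RR\to\RR$ that takes integer values on $\Lambda$, and take $c_{\lambda^*}$ to be the image in $\TT A$ of the preimage of a regular value of $\ell$, oriented by the coorientation induced by $\ell$). With this setup, the natural pairing $H_1(\TT A,N)\otimes H^1(\TT A,\ZZ)\to N$ agrees with the pairing $(\lambda\otimes n)\otimes\lambda^*\mapsto\lambda^*(\lambda)\,n$ arising from the tensor decomposition.

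Next I would compute this pairing on the tropical $1$-chain $\sum_e(\partial_eh)\,e$. After a small perturbation of $c_{\lambda^*}$ ensuring transverse intersection with $h(\Gamma)$ (and avoiding the vertices), the pairing is a signed count of intersection points of $c_{\lambda^*}$ with $h(\Gamma)$, but since the coefficients of the chain live in $N$, each intersection with an edge $e$ contributes not $\pm 1$ but $\pm\partial_eh$, where the sign is $+$ when the edge crosses $c_{\lambda^*}$ positively with respect to its coorientation, and $-$ otherwise. Flipping the orientation of an edge reverses simultaneously the sign of the intersection and the sign of $\partial_eh$, so the contribution is invariant; this lets me rewrite the pairing as the sum of slopes $\partial_eh$ over edges oriented so as to cross $c_{\lambda^*}$ positively. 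Since $c_{\lambda^*}$ is precisely a path Poincaré dual to $\lambda^*$, this is exactly the asserted formula.

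The main subtlety I anticipate is the bookkeeping of signs and orientations: making sure that the conventions on $\det_\Lambda$, $\det_N$, the Poincaré duality isomorphism, and the intersection pairing are all compatible, and that the final answer is independent of the chosen representative of $c_{\lambda^*}$ and of the orientations of the edges of $\Gamma$. Independence of the representative follows from the fact that $\sum_e(\partial_eh)\,e$ is a cycle (balancing condition), so its pairing with $c_{\lambda^*}$ only depends on the cohomology class $\lambda^*$; independence of edge orientations is the symmetry noted above. Everything else is essentially a direct application of Poincaré duality on the real $2$-torus $\TT A$.
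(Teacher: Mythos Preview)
The paper does not actually prove this proposition; it is stated without proof, followed only by the remark that ``concretely, taking a fundamental domain for $\TT A$, the columns of the matrix $C$ are obtained by adding the slopes of the edges intersecting the right side and top side of the parallelogram respectively. This in fact amounts to compute the intersection numbers of $C$ with a basis of $H_1(\TT A,N)$.'' Your proposal is correct and supplies precisely the argument the paper omits: you identify the degree in $H_1(\TT A,N)\simeq\Lambda\otimes N\simeq\mathrm{Hom}(\Lambda^*,N)$, realize the evaluation at $\lambda^*\in\Lambda^*\simeq H^1(\TT A,\ZZ)$ as the $N$-valued intersection pairing with a Poincar\'e dual loop $c_{\lambda^*}$, and compute that intersection as the sum of slopes of edges oriented to cross $c_{\lambda^*}$ positively. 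The independence of the representative $c_{\lambda^*}$ (via the balancing condition making $\sum_e(\partial_e h)e$ a cycle) and of edge orientations is handled correctly. Your argument is the natural formalization of the paper's one-line explanation.
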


Concretely, taking a fundamental domain for $\TT A$, the columns of the matrix $C$ are obtained by adding the slopes of the edges intersecting the right side and top side of the parallelogram respectively. This in fact amounts to compute the intersection numbers of $C$ with a basis of $H_1(\TT A,N)$.

\begin{prop}
A class $C\in\Lambda\otimes N$ is realizable by curves in $\TT A$ if and only if the matrix $CS^T$ is symmetric.
\end{prop}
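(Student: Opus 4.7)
The plan is to prove both directions by lifting to the universal cover $N_\RR\twoheadrightarrow \TT A$ and interpreting tropical curves as corner loci of tropical theta functions on $N_\RR$.

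For the ``only if'' direction, given a parametrized tropical curve $h:\Gamma\to\TT A$ realizing the class $C$, lift to a $\Lambda$-equivariant map $\tilde h:\tilde\Gamma\to N_\RR$, where $\tilde\Gamma$ is the pullback of the universal cover. The image $\tilde h(\tilde\Gamma)\subset N_\RR$ is a weighted balanced rational graph and hence appears as the corner locus of a convex piecewise-affine function $\theta:N_\RR\to\RR$, unique up to an additive affine function. The $\Lambda$-periodicity of the corner locus forces $\theta$ to be quasi-periodic: for every $\lambda\in\Lambda$,
$$\theta(x+\lambda)-\theta(x)=\langle\ell(\lambda),x\rangle+c(\lambda),$$
with $\ell(\lambda)\in N^*$ and $c(\lambda)\in\RR$. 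Applying the cocycle identity $\theta((x+\lambda)+\mu)=\theta((x+\mu)+\lambda)$ and cancelling the affine parts in $x$ forces the bilinear pairing $(\lambda,\mu)\mapsto\langle\ell(\lambda),\mu\rangle$ to be symmetric.

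The next step is to identify the matrix of $\ell:\Lambda\to N^*$, in the chosen bases, with $CS^T$. This is done by computing $\ell(\lambda)$ directly from the slopes of $\tilde h$: integrating the slope-sum of $\theta$ along any path from $x$ to $x+\lambda$ transverse to $\tilde h(\tilde\Gamma)$ yields exactly the count used in the preceding proposition to define the column $C(\lambda^*)$ of $C$, and tensoring with the $N_\RR$-coordinates of $\lambda$ recovers the product $CS^T$. Hence symmetry of the pairing $\langle\ell(\lambda),\mu\rangle$ is exactly symmetry of the matrix $CS^T$.

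For the converse, assume $CS^T$ is symmetric. Then the bilinear form $(\lambda,\mu)\mapsto\langle CS^T\lambda,\mu\rangle$ on $\Lambda$ is symmetric and extends, possibly after a positive-definite completion, to a quadratic function $Q$ on $N_\RR$. One then constructs a convex piecewise-affine $\theta:N_\RR\to\RR$ as the tropical theta function
$$\theta(x)=\max_{\lambda\in\Lambda}\Bigl\{\langle C\lambda^*,x\rangle-\tfrac{1}{2}Q(\lambda)\Bigr\},$$
the symmetry of $CS^T$ being exactly what guarantees that the cocycle assembling the linear parts is well-defined. The corner locus of $\theta$ is a $\Lambda$-periodic weighted balanced rational graph in $N_\RR$; descending to $\TT A$ produces a parametrized tropical curve whose degree, computed via the preceding proposition, is $C$.

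The main technical obstacle is the explicit identification of the quasi-period map $\ell$ with the matrix $CS^T$; once the Poincar\'e-dual identifications $H_1(\TT A,N)\simeq\Lambda\otimes N$ and $\Lambda^*\simeq H^1(\TT A,\ZZ)$ have been set up with consistent sign conventions, both directions reduce to a formal cocycle computation.
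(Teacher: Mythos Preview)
The paper does not actually prove this proposition: Section~\ref{section tropical invariants} is explicitly a ``reminder of the results proven in \cite{blomme2022abelian1,blomme2022abelian2}'', and the statement is given without argument. So there is no in-paper proof to compare against; what can be said is whether your argument stands on its own and whether it matches the standard approach from those references.

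Your strategy via tropical theta functions is the natural one and is essentially what appears in \cite{mikhalkin2008tropical} and \cite{blomme2022abelian2}. The ``only if'' direction is sound: a $\Lambda$-periodic balanced weighted $1$-cycle in $N_\RR$ is the bend locus of a piecewise-linear function, quasi-periodicity follows, and the cocycle identity forces symmetry of the associated bilinear form. The identification of the quasi-period map with $CS^T$ is indeed the bookkeeping step; it is not difficult but your sketch leaves it at the level of a promise. One notational slip: in your theta formula $\langle C\lambda^*,x\rangle$ is ill-typed, since $C:\Lambda^*\to N$ while your index $\lambda$ runs over $\Lambda$; you mean the composite $\Lambda\to M=N^*$ dual to $C$ (equivalently, $\ell(\lambda)$ from the first half).

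There is a genuine gap in the ``if'' direction. Your theta function
\[
\theta(x)=\max_{\lambda\in\Lambda}\bigl\{\langle \ell(\lambda),x\rangle-\tfrac12 Q(\lambda)\bigr\}
\]
is only well-defined (the max is attained and the corner locus is locally finite) when $Q$ is positive definite on $\Lambda_\RR$. The phrase ``possibly after a positive-definite completion'' does not fix this: altering $Q$ alters the quasi-periodicity and hence the degree of the resulting curve, so you cannot freely perturb $Q$ and still land in the class $C$. In fact the convexity of $\theta$ in the ``only if'' direction already shows that realizable classes satisfy a positivity condition beyond mere symmetry of $CS^T$; either the proposition is tacitly restricted to effective (equivalently, positive semi-definite) classes, or you need a different construction---e.g.\ writing $C=C_+-C_-$ with both $C_\pm S^T$ symmetric positive definite and arguing with virtual curves, or giving a direct piecewise-linear construction that does not go through a global convex potential. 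As written, your converse only covers the positive case.
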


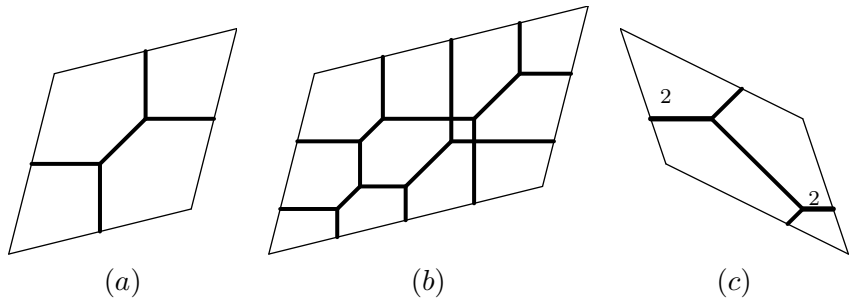
\begin{figure}
\begin{center}
\begin{tabular}{ccc}
\begin{tikzpicture}[line cap=round,line join=round,>=triangle 45,x=0.3cm,y=0.3cm]
\clip(0,0) rectangle (10,10);
\draw [line width=0.5pt] (0,0)-- (8,2);
\draw [line width=0.5pt] (0,0)-- (2,8);
\draw [line width=0.5pt] (8,2)-- (10,10);
\draw [line width=0.5pt] (2,8)-- (10,10);

\draw [line width=1.5pt] (4,1)-- (4,4);
\draw [line width=1.5pt] (1,4)-- (4,4);
\draw [line width=1.5pt] (4,4)-- (6,6);
\draw [line width=1.5pt] (6,6)-- (6,9);
\draw [line width=1.5pt] (6,6)-- (9,6);

\begin{scriptsize}

\end{scriptsize}
\end{tikzpicture}
&
\begin{tikzpicture}[line cap=round,line join=round,>=triangle 45,x=0.3cm,y=0.3cm]
\clip(0,0) rectangle (14,11);
\draw [line width=0.5pt] (0,0)-- (12,3);
\draw [line width=0.5pt] (0,0)-- (2,8);
\draw [line width=0.5pt] (2,8)-- (14,11);
\draw [line width=0.5pt] (12,3)-- (14,11);

\draw [line width=1.5pt] (3,2)-- (4,3);
\draw [line width=1.5pt] (4,3)-- (4,5);
\draw [line width=1.5pt] (4,3)-- (6,3);
\draw [line width=1.5pt] (6,3)-- (8,5);
\draw [line width=1.5pt] (0.5,2)-- (3,2);
\draw [line width=1.5pt] (3,0.75)-- (3,2);
\draw [line width=1.5pt] (4,5)-- (5,6);
\draw [line width=1.5pt] (1.25,5)-- (4,5);
\draw [line width=1.5pt] (5,6)-- (5,8.75);
\draw [line width=1.5pt] (5,6)-- (9,6);
\draw [line width=1.5pt] (9,6)-- (11,8);
\draw [line width=1.5pt] (11,8)-- (13.25,8);
\draw [line width=1.5pt] (11,8)-- (11,10.25);
\draw [line width=1.5pt] (8,5)-- (8,9.5);
\draw [line width=1.5pt] (8,5)-- (12.5,5);
\draw [line width=1.5pt] (6,3)-- (6,1.5);
\draw [line width=1.5pt] (9,6)-- (9,2.25);

\begin{scriptsize}

\end{scriptsize}
\end{tikzpicture}
&
\begin{tikzpicture}[line cap=round,line join=round,>=triangle 45,x=0.3cm,y=0.3cm]
\clip(0,0) rectangle (10,10);
\draw [line width=0.5pt] (2,4)-- (0,10);
\draw [line width=0.5pt] (2,4)-- (10,0);
\draw [line width=0.5pt] (0,10)-- (8,6);
\draw [line width=0.5pt] (10,0)-- (8,6);

\draw [line width=2pt] (1.33333333333,6)-- (4,6);
\draw [line width=1.5pt] (4,6)-- (8,2);
\draw [line width=1.5pt] (4,6)-- (5.333333333,7.33333333);
\draw [line width=1.5pt] (8,2)-- (7.333333333,1.33333333);
\draw [line width=2pt] (8,2)-- (9.3333333333,2);

\begin{scriptsize}
\draw (2,7) node {$2$};
\draw (8.5,2.5) node {$2$};
\end{scriptsize}
\end{tikzpicture}
\\
$(a)$ & $(b)$ & $(c)$\\
\end{tabular}

\caption[truc]{\label{figure example tropical curves}Three examples of tropical curves in tropical tori of respective degrees $\left(\begin{smallmatrix} 1 & 0 \\ 0 & 1 \\ \end{smallmatrix}\right)$, $\left(\begin{smallmatrix} 2 & 0 \\ 0 & 3 \\ \end{smallmatrix}\right)$ and $\left(\begin{smallmatrix} 2 & 1 \\ 0 & 1 \\ \end{smallmatrix}\right)$ .}
\end{center}
\end{figure}

\begin{expl}
On Figure \ref{figure example tropical curves} we depict three tropical curves along with their respective degree.
\end{expl}

\subsubsection{Simple tropical curves.} The enumeration of curves solution to the enumerative problems that we consider only involves \textit{simple} tropical curves, whose definition we now recall.

\begin{defi}
A parametrized tropical curve $h:\Gamma\to\TT A$ is \textit{simple} if $\Gamma$ is trivalent and $h$ is an immersion.
\end{defi}

\begin{prop}
The dimension of the deformation space of a simple tropical curve of genus $g$ is $g$. If the curve is not simple but $h$ is still an immersion, the dimension is strictly less.
\end{prop}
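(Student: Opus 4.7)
The plan is to stratify the moduli of parametrized genus-$g$ tropical curves in $\TT A$ by combinatorial type and to carry out an elementary linear-algebra dimension count on each stratum. A combinatorial type consists of the abstract graph $\Gamma$, a primitive slope $u_e$ and an integer weight $w_e$ for every edge, together with a choice of homology class $[e]\in H_1(\TT A,\ZZ)=\Lambda$ realized by every edge. Inside a fixed stratum, a parametrized curve is uniquely determined by the position of a chosen root vertex in $\TT A$ together with the edge lengths $\ell_e>0$; balancing at every vertex is built into the combinatorial type, and the only genuine constraints are the $g$ cycle-closing equations: for every independent cycle $\gamma=\sum_e\epsilon_e e$ of $\Gamma$ one must have
\[
\sum_e \epsilon_e\,\ell_e\,w_e\,u_e \;=\; \sum_e \epsilon_e[e] \in N_\RR,
\]
contributing two scalar equations per cycle.

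In the simple case, trivalence combined with $|V(\Gamma)| - |E(\Gamma)| = 1-g$ and $3|V(\Gamma)| = 2|E(\Gamma)|$ gives $|V(\Gamma)| = 2(g-1)$ and $|E(\Gamma)| = 3(g-1)$. The stratum is then cut out of an affine space of parameters by the $2g$ cycle-closing equations. The immersion hypothesis, which forces any two slopes incident to a common trivalent vertex to be non-proportional, is what controls the rank of this linear system: a spanning-tree argument propagating around $\Gamma$ shows that the rank of the cycle-closing map is exactly what is needed to produce a stratum of dimension $g$. Varying over combinatorial types and taking the union recovers the full simple locus.

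For the non-simple case with $h$ still an immersion, some vertex of $\Gamma$ has valency $k\geq 4$. Re-running the Euler-characteristic identities with an excess-valency correction gives $|E(\Gamma)| = 3(g-1) - \sum_V(\mathrm{val}(V)-3) < 3(g-1)$, while the cycle-closing system retains the same rank estimate under the immersion hypothesis. The dimension of the stratum therefore drops by exactly the total excess valency $\sum_V(\mathrm{val}(V)-3) \geq 1$, so any non-simple immersive stratum has dimension strictly less than $g$.

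The main obstacle is the rank calculation in Step 2: the pointwise immersion condition must be promoted to a global statement about the cycle-closing map. I would handle this inductively along a spanning tree of $\Gamma$: each non-tree edge creates exactly one new independent cycle, and at the trivalent vertex where the edge is incident the immersion condition supplies two slopes that span $N_\RR$, which is what one needs to increase the rank of the length-to-cycle-class map by the appropriate amount at each step. Once this rank assertion is in place, both the simple and the non-simple immersive case reduce to bookkeeping with $|V(\Gamma)|$ and $|E(\Gamma)|$.
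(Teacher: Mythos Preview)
The paper does not give a proof of this proposition; it is stated in Section~\ref{section tropical invariants} as a recall from \cite{blomme2022abelian1,blomme2022abelian2}. So there is no proof in the paper to compare against, and one can only assess your argument on its own merits.

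Your setup is the standard one and is fine, but the core rank assertion has a genuine gap. With parameters $2+|E(\Gamma)|=3g-1$ (root position plus edge lengths) and $2g$ scalar cycle-closing equations, independence of the constraints would give dimension $g-1$, not $g$. So there must be exactly one linear dependency among the $2g$ equations, and your spanning-tree sketch does not produce it: as written, ``the immersion condition supplies two slopes that span $N_\RR$'' at each non-tree edge suggests the rank jumps by $2$ at every step, landing at $2g$ and hence dimension $g-1$. The missing dependency is in fact the balancing condition. This is most transparent in the vertex-position model: the deformation space is the kernel of
\[
N_\RR^{|V|}\longrightarrow \RR^{|E|},\qquad (V_v)_v\longmapsto \big(u_e^\perp\cdot(V_{e_+}-V_{e_-})\big)_e,
\]
and the transpose $\RR^{|E|}\to N_\RR^{|V|}$, $(a_e)\mapsto(\sum_{e\ni v}\pm a_e u_e)_v$, has kernel exactly the one-dimensional line $\RR\cdot(w_e)_e$ when $\Gamma$ is trivalent and $h$ is an immersion (at each trivalent vertex the relation $\sum a_eu_e=0$ forces $a_e/w_e$ to be constant, and connectedness propagates the constant). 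This gives $\mathrm{rank}=|E|-1=3g-4$ and deformation dimension $2|V|-(3g-4)=g$.

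For the non-simple part, your claim that ``the cycle-closing system retains the same rank estimate'' is not justified and is not always innocuous: at a vertex of valency $k\geq 4$ the local space of dependencies $\{(a_e):\sum a_eu_e=0\}$ has dimension $k-2>1$, so the global dependency space can grow and offset the drop in $|E|$. One needs to argue carefully that the growth of the dependency space does not fully compensate the loss of edges; stating that the dimension ``drops by exactly the total excess valency'' is stronger than what is needed and does not follow from your sketch.
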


Let $C$ be a class with $C^2\neq 0$ and let $\TT A$ be a generic tropical abelian surface with curves in the class $C$. As in the classical case, any tropical curve in $\TT A$ of degree $\TT A$ has genus at least $2$: rational curves are mapped to a constant, and elliptic curves only occur for non-generic choices of $\TT A$.

\subsubsection{Multiplicities.} Let $h:\Gamma\to \TT A$ be a simple tropical curve:
\begin{itemize}
\item The gcd of the curve $\delta_\Gamma$ is defined to be the gcd of the weights of its edges: $\delta_\Gamma=\mathrm{gcd}_e w_e$. If $\delta_\Gamma=1$, the curve is called \textit{primitive}. If $\delta_\Gamma\neq 1$, the curve can be written as a $\delta_\Gamma$ times a primitive curve. In particular, the set of tropical curves is endowed with a $\NN^*$ action and we can use definitions from Section \ref{section convolution}.

\item The \textit{usual multiplicity} of the curve is
$$m_\Gamma=\prod_V m_V,$$
where the product is over the trivalent vertices of $\Gamma$, and $m_V=|\det_N(a_V,b_V)|$ is the determinant of two out of the three outgoing edges.

\item The \textit{usual refined multiplicity} is
$$m_\Gamma^\refined=\prod_V (q^{m_V/2}-q^{-m_V/2})\in\ZZ[q^{\pm 1/2}],$$
obtained by replacing the vertex multiplicities by their quantum analog. It is a Laurent polynomial multiplicity, which is used in \cite{blomme2022abelian1,blomme2022abelian2}, up to a denominator $(q^{1/2}-q^{-1/2})^{2g-2}$.

\item The second refined multiplicity
$$M_\Gamma=\sum_{k|\delta_\Gamma}k^{2g-2}\varphi(k)m_\Gamma^\refined(q^{1/k}),$$
where $\varphi$ is the Euler function. This multiplicity is not used in \cite{blomme2022abelian1,blomme2022abelian2} but is going to be more useful. See Remark \ref{remark refined multiplicities definition} below.

\item If $\Sigma\subset\Gamma$ is a subgraph of genus $2$, let $\Lambda_\Gamma^\Sigma$ be the index of $H_1(\Sigma,\ZZ)$ inside $H_1(\TT A,\ZZ)\simeq\Lambda$, obtained by taking the determinant inside $\Lambda$ between two loops of $\Sigma$.
\end{itemize}

\begin{rem}\label{remark refined multiplicities definition}
In \cite{blomme2022abelian1,blomme2022abelian2}, up to the denominator $(q^{1/2}-q^{-1/2})^{2g-2}$, we use the usual refined multiplicity $m_\Gamma^\refined$, obtained by replacing the vertex multiplicities by their quantum analogs. Due to the lack of presence of $\delta_\Gamma$, this multiplicity does not specialize to the complex multiplicity $\delta_\Gamma m_\Gamma$ provided by the correspondence theorem. To get a multiplicity that refines the complex multiplicity, in \cite{blomme2022abelian1}, we take $\delta_\Gamma m_\Gamma^\refined$. For reasons due to the multiple cover formula, we prefer here to use the second refined multiplicity as a refinement of the complex multiplicity $\delta_\Gamma m_\Gamma$. Once we divide by $(q^{1/2}-q^{-1/2})^{2g-2}$, it indeed specializes to $\delta_\Gamma m_\Gamma$ when $q$ goes to $1$ since $\sum_{k|\delta}\varphi(k)=\delta$. Notice that both multiplicities coincide if the curve is primitive.
\end{rem}

\subsubsection{Tropical curves and linear systems.}\label{sec-curves-in-FLS} There exists a theory of tropical line bundles and divisors over tropical abelian surfaces (or varieties), related to the theory of tropical $\theta$-functions. We refer the interested reader to \cite[Section 4]{mikhalkin2008tropical} or to \cite[Section 2.2]{blomme2022abelian2}. For our purpose, we only a criterion to say when two tropical curves of the same degree belong to the same linear system. Such a criterion is the content of \cite[Theorem 2.21]{blomme2022abelian2}, and may be here taken as a definition.

For $i=1,2$, let $h_i:\Gamma_i\to\TT A$ be two tropical curves of the same degree $C$. There exists a covering map $\pi_i:\widetilde{\Gamma}_i\to\Gamma_i$ of $\Gamma_i$ such that $h_i\circ\pi_i$ can be lifted to $N_\RR$, yielding a periodic tropical curves in $N_\RR$. Choose a point $p_0$ in the complement of $\widetilde{h}_1(\widetilde{\Gamma}_1)\cup\widetilde{h}_2(\widetilde{\Gamma}_2)$, an element $\lambda\in\Lambda\subset N_\RR$, and a path $\gamma:p_0\rightsquigarrow p_0+\lambda$ transverse to $\widetilde{h}_1(\widetilde{\Gamma}_1)$ and $\widetilde{h}_2(\widetilde{\Gamma}_2)$. At each intersection point $p$ between $\gamma$ and $\widetilde{h}_1(\widetilde{\Gamma}_1)$ (resp. $\widetilde{h}_2(\widetilde{\Gamma}_2)$), let $u_p$ be the slope of the edge of $\widetilde{h}_1(\widetilde{\Gamma}_1)$ (resp. $\widetilde{h}_2(\widetilde{\Gamma}_2)$) oriented such that the intersection index with $\gamma$ is positive (resp. negative). We consider
$$K_{p_0}(\lambda)=\sum_{p\in\gamma\cap \left(\widetilde{h}_1(\widetilde{\Gamma}_1)\cup\widetilde{h}_2(\widetilde{\Gamma}_2)\right)} \det(p,u_p),$$
where for $\det(p,u_p)$, $p$ and $u_p$ are seen as element of the vector space $N_\RR$. The fact that the above quantity does not depend on the choice of $\gamma$ is shown in \cite{blomme2022abelian2}. The map $K_{p_0}:\Lambda\to\RR$ is linear, and thus defines an element of $\Lambda^*_\RR$. As we have an inclusion $\Lambda\subset N_\RR$, we also have $M=N^*\subset \Lambda^*_\RR$. The map $K_{p_0}$ does not depend on $p_0$ up to the addition of an element of $M$. The following Definition is \cite[Theorem 2.21]{blomme2022abelian2}, but we take it here as definition of linear equivalence.

\begin{defi}
The tropical curves $h_{1/2}:\Gamma_{1/2}\to\TT A$ of common degree $C$ belong to the same linear system if $K_{p_0}\in\Lambda^*_\RR$ belongs to the image of $M$, \textit{i.e.} takes integer values on elements of $N$.
\end{defi}

\subsubsection{A finiteness technical result.}
The following Lemma is similar to \cite[lemma 5.7]{blomme2021floor} and deals with the slopes of the edges of a tropical curve.

\begin{lem}\label{lem-finite-slopes}
Consider a parametrized tropical curve $h:\Gamma\to\TT A$ in a class $C$. The slope of the edges of $(\Gamma,h)$ can only take a finite number of values.
\end{lem}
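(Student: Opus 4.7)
The plan is to bound the slopes $w_e u_e \in N$ using tropical intersection theory with closed geodesics in $\TT A$. First, I would fix a basis $v_1, v_2$ of the lattice $\Lambda$ and consider the closed loops $L_i = \RR v_i / \ZZ v_i \subset \TT A$. Although $L_i$ need not be a tropical curve in the strict sense when $v_i \notin N_\QQ$, it is a smooth closed $1$-cycle whose class in $H_1(\TT A, N_\RR) \simeq \Lambda\otimes N_\RR$ pairs with $C$ via the intersection form. The key fact I would use is that, for any tropical curve $h:\Gamma\to\TT A$ of class $C$ and any translate $L_i^\tau$ of $L_i$ transverse to $h(\Gamma)$,
$$C\cdot[L_i] = \sum_{p\in h(\Gamma)\cap L_i^\tau} w_{e(p)}\,|\det_N(u_{e(p)},v_i)|,$$
with every summand non-negative, in accordance with the tropical intersection formula recalled in Section \ref{section tropical invariants}.

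Next, for any edge $e_0$ of $\Gamma$ with primitive direction $u_0 = u_{e_0}$ not parallel to $v_i$, I would choose a translate of $L_i$ passing transversely through an interior point of $h(e_0)$; this is possible for a generic translate. Isolating the contribution of that single intersection point in the non-negative sum above gives the inequality
$$w_{e_0}\,|\det_N(u_0, v_i)| \leqslant K_i := C\cdot[L_i],$$
where $K_i$ depends only on $C$ and on the fixed basis $v_1, v_2$.

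To conclude, I would combine the two inequalities. For any primitive $u_0 \in N$, at most one of $v_1, v_2$ can be $\RR$-parallel to $u_0$, since they are $\RR$-linearly independent. When neither is parallel, both inequalities apply; the linear map $x \mapsto (\det_N(x, v_1), \det_N(x, v_2))$ is an $\RR$-linear isomorphism $N_\RR \to \RR^2$, so the simultaneous bounds on $w_{e_0}\det_N(u_0, v_i)$ force $w_{e_0}u_0 \in N$ to lie in a bounded region, and hence in a finite subset of $N$. When instead $u_0 \parallel v_i$ for one $i$ (which requires $v_i \in N_\QQ$), $u_0$ must be $\pm$ the primitive vector of $N$ on that ray, giving only finitely many possible directions, and the non-degenerate inequality obtained from the other $v_j$ bounds $w_{e_0}$. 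In every case $w_{e_0}u_0$ lies in a bounded subset of $N$, so takes only finitely many values as $h$ and $e$ vary over all tropical curves of class $C$ and all their edges.

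The main delicacy is the use of the intersection pairing when $v_i$ is irrational, so that $L_i$ has irrational slope and is not tropical in the strict sense. One justifies the displayed formula either by extending the bilinear intersection form from $\Lambda\otimes N$ to $\Lambda\otimes N_\RR$ by $\RR$-linearity, or by approximating $v_i$ by rational vectors in $\Lambda_\QQ$, taking rational closed geodesics and passing to the limit; in either case the local contribution $w_e|\det_N(u_e, v_i)|$ and its non-negativity at a transverse intersection are unaffected.
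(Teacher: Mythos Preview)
Your proof is correct and follows essentially the same strategy as the paper: bound the coordinates of an edge slope $u$ by intersecting $\Gamma$ with two reference cycles whose directions span $N_\RR$, and observe that each local intersection multiplicity $w_e|\det(u_e,\cdot)|$ is a non-negative summand in a fixed intersection number.

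The only difference is the choice of reference cycles. The paper takes a genuine tropical curve $\Gamma_0$ in some class $C_0$, picks two of its edges with $N$-rational slopes $v_1,v_2$ spanning $N_\RR$, and translates $\Gamma_0$ so that the chosen edge of $\Gamma$ meets the edge of slope $v_j$; tropical B\'ezout then gives $|\det(v_j,u)|\leqslant C\cdot C_0$ directly, with no irrationality issue. You instead use the closed geodesics $L_i$ in the $\Lambda$-basis directions $v_i$, which may be irrational in $N_\RR$; this forces you to justify the intersection formula with absolute values in that case. Your justification is fine (it follows from the balancing condition at each vertex of $\Gamma$, which makes the sum $\sum_p w_p|\det(u_p,v_i)|$ invariant under translation of $L_i$ regardless of whether $v_i\in N_\QQ$), but the paper's choice of a tropical $\Gamma_0$ sidesteps this extra step entirely.
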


\begin{proof}
The idea is to use tropical Bezout theorem in $\TT A$ to bound the coordinates of the slopes. Tropical Bezout states that given two transverse tropical curves $h_{1/2}:\Gamma_{1/2}\to\TT A$ in the classes $C_{1/2}$, we have
$$C_1\cdot C_2=\sum_{h_1(p_1)=h_2(p_2)}|\det(u_1(p_1),u_2(p_2))|,$$
where $u_i(p_i)$ is the slope of $h_i$ at $p_i$. Consider a fixed preferred tropical curve $\Gamma_0$ in $\TT A$ in some class $C_0$ whose slope span $\RR^2$. Let $e_1,e_2$ be edges of $\Gamma_0$ and $v_1,v_2$ their slopes, which form a $\RR$-basis of $N_\RR$. Now, consider a curve $h:\Gamma\to\TT A$ in the class $C$ and let $u$ be the slope of one of its edges $e$. It is possible to find two different translates of $\Gamma_0$ such that $e$ and $e_1$ (resp. $e_2$) intersect. Therefore, the intersection number $C\cdot C_0$ is at least $|\det(v_1,u)|$ (resp. $|\det(v_2,u)|$). In particular, coordinates of $u$ are bounded by $C\cdot C_0$. Thus, $u\in N$ can only take a finite number of values.
\end{proof}

\subsection{Enumerative problems and invariants}

We now recall the enumerative problems and the corresponding invariants.

\subsubsection{Curves passing through $g$ points.} The first enumerative problem is to count genus $g$ curves of degree $C\in\Lambda\otimes N$ that pass through a generic configuration $\P$ of $g$ points inside $\TT A$. For a generic choice, \cite{blomme2022abelian1} states that the tropical curves passing through a generic configuration of points $\P$ are simple, so that they have well-defined multiplicities. Moreover, if $h:\Gamma\to\TT A$ is any solution, the complement of $\P$ in $\Gamma$ is connected without cycle.

We consider the following counts that, according to \cite{blomme2022abelian1}, do not depend on the choice of the point configuration $\P$ nor the abelian surface $\TT A$, as long as these choices are generic, and only depend on the class $C$ through its self-intersection $C^2$ and its divisibility:

$$\begin{array}{lr>{\displaystyle}lr>{\displaystyle}l}
\triangleright\text{ counts with fixed gcd: } & N_{g,C,k}=& \sum_{\substack{\Gamma\supset\P \\ \delta_\Gamma=k}}m_\Gamma, & BG_{g,C,k}= & \sum_{\substack{\Gamma\supset\P \\ \delta_\Gamma=k}}m_\Gamma^\refined, \\
\triangleright\text{ counts with true multiplicity: } & N_{g,C}= & \sum_{\Gamma\supset\P}\delta_\Gamma m_\Gamma, & R_{g,C}= & \sum_{\Gamma\supset\P} M_\Gamma, \\
\triangleright\text{ counts with auxiliary multiplicity: } & M_{g,C}= & \sum_{\Gamma\supset\P}m_\Gamma, & BG_{g,C}= & \sum_{\Gamma\supset\P}m_\Gamma^\refined. \\
\end{array}$$

The notation ``$N$" is for the ``N"umber of tropical curves counted with a multiplicity so that it coincides with the complex count. The ``$M$" was just close in the alphabet. The ``$BG$" stands for Block-G\"ottsche, who introduced the refined multiplicity, while the ``$R$"stands for ``R"efined as we use a refined multiplicity, chosen so that we get a multiple cover formula.

\begin{rem}
The definition of the refined count $R_{g,C}$ differs from the definition in \cite{blomme2022abelian1} since we use the second refined multiplicity $M_\Gamma$ as a refinement of the complex multiplicity rather than the naive refinement $\delta_\Gamma m_\Gamma^\refined$. For a fixed value of the gcd of the curve, the multiplicity $M_\Gamma$ depends linearly on $m_\Gamma^\refined$. Thus, the invariance for the counts of genus $g$ curves passing through $\P$ with multiplicity $M_\Gamma$ is implied by the invariance for counts of curves with a fixed gcd.
\end{rem}

We prove several relations between these invariants, seen as functions on the set of classes.

\begin{prop}\label{proposition relation invariants counts point case}
For any class we have
$$\begin{array}{>{\displaystyle}r>{\displaystyle}l>{\displaystyle}r>{\displaystyle}l}
N_{g,C,k} & =k^{4g-4}N_{g,C/k,1}, & BG_{g,C,k} & =BG_{g,C/k,1}(q^{k^2}). \\
\end{array}$$
Furthermore, we have the following identities:
$$\begin{array}{>{\displaystyle}r>{\displaystyle}l>{\displaystyle}r>{\displaystyle}l}
M_{g,C} & =\sum_{k|C}k^{4g-4}N_{g,C/k,1}, & BG_{g,C} & =\sum_{k|C} BG_{g,C/k,1}(q^{k^2}), \\
N_{g,C} & =\sum_{k|C}k^{4g-3}N_{g,C/k,1}, & R_{g,C} & =\sum_{lk|C}k^{2g-2}\varphi(k) BG_{g,C/kl,1}(q^{kl^2}). \\
\end{array}$$
\end{prop}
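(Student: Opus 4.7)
The plan is to exploit the natural $\NN^*$-action on simple tropical curves obtained by scaling all edge weights by a common integer, and to track how each multiplicity transforms under this action. The basic observation is that multiplying every edge weight of a primitive simple curve $\Gamma'$ of degree $C/k$ by $k$ yields a simple curve $k\cdot\Gamma'$ of degree $C$ with $\delta_{k\cdot\Gamma'}=k$ and the same underlying map to $\TT A$; conversely any $\Gamma$ of degree $C$ with $\delta_\Gamma=k$ arises uniquely this way by dividing through. Since the parametrization is unchanged, this yields a bijection between primitive genus $g$ simple curves of degree $C/k$ through $\P$ and genus $g$ simple curves of degree $C$ with gcd $k$ through $\P$.

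Next I would compute the scaling behavior of each multiplicity. A simple genus $g$ tropical curve in the compact surface $\TT A$ is trivalent with no unbounded ends, so Euler's formula forces it to have exactly $2g-2$ vertices. At each vertex $V$, the two slopes entering $m_V=|\det_N(a_V,b_V)|$ are each rescaled by $k$, so $m_V$ is multiplied by $k^2$, giving $m_{k\cdot\Gamma'}=k^{4g-4}m_{\Gamma'}$. The quantum factor $q^{m_V/2}-q^{-m_V/2}$ becomes $q^{k^2 m_V/2}-q^{-k^2 m_V/2}$, so $m^\refined_{k\cdot\Gamma'}(q)=m^\refined_{\Gamma'}(q^{k^2})$. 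Summing these scalings over the bijection immediately yields the two fixed-gcd identities
$$N_{g,C,k}=k^{4g-4}N_{g,C/k,1},\qquad BG_{g,C,k}(q)=BG_{g,C/k,1}(q^{k^2}).$$

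For the remaining identities, I would partition the set of all curves of degree $C$ through $\P$ according to the value $\delta_\Gamma=k$, which must divide $C$. This gives
$$M_{g,C}=\sum_{k|C}N_{g,C,k},\quad BG_{g,C}=\sum_{k|C}BG_{g,C,k},\quad N_{g,C}=\sum_{k|C}k\,N_{g,C,k},$$
and substituting the two fixed-gcd identities produces the three claimed formulas for $M_{g,C}$, $BG_{g,C}$ and $N_{g,C}$.

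The last identity requires a bit more care. Unfolding the definition of $M_\Gamma$ and exchanging sums yields
$$R_{g,C}=\sum_{l|C}\sum_{k|l}k^{2g-2}\varphi(k)\sum_{\delta_\Gamma=l}m^\refined_\Gamma(q^{1/k}).$$
Because $k|l$, the same scaling computation gives $m^\refined_{l\cdot\Gamma'}(q^{1/k})=m^\refined_{\Gamma'}(q^{l^2/k})$, so the inner sum equals $BG_{g,C/l,1}(q^{l^2/k})$. The reindexing $l=kn$ converts the conditions $k|l$ and $l|C$ into the single condition $kn|C$, with $l^2/k=kn^2$, producing the claimed
$$R_{g,C}=\sum_{kn|C}k^{2g-2}\varphi(k)BG_{g,C/kn,1}(q^{kn^2})$$
after renaming $n\mapsto l$. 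The only mildly delicate step is this final reindexing and exchange of summations; once the scaling lemma for $m_V$ and the quantum factors is in hand, everything else is bookkeeping.
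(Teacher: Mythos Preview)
Your proposal is correct and follows essentially the same approach as the paper: establish the bijection between curves of gcd $k$ in class $C$ and primitive curves in class $C/k$, compute the scaling $m_V\mapsto k^2 m_V$ at each of the $2g-2$ trivalent vertices to obtain the two fixed-gcd identities, then partition by gcd and sum. Your treatment of $R_{g,C}$---unfolding $M_\Gamma$, swapping sums, applying the refined scaling $m^\refined_{l\cdot\Gamma'}(q^{1/k})=m^\refined_{\Gamma'}(q^{l^2/k})$, and reindexing via $l=kn$---is exactly the computation the paper carries out (with $\delta$ in place of your $l$).
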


\begin{proof}
Given a tropical curve $h:\Gamma\to\TT A$ in the class $C$ with gcd $k$, we obtain a primitive tropical curve $\widehat{h}:\widehat{\Gamma}\to\TT A$ in the class $C/k$ which has the same image but the weight of every edge has been divided by $k$. The graph are the same except the lengths has been multiplied by $k$. At each of the $2g-2$ vertices, if $m_V$ and $\widehat{m}_V$ denote the multiplicity in $\Gamma$ and $\widehat{\Gamma}$, we have $m_V=k^2\widehat{m}_V$. Thus, we deduce that
$$m_\Gamma=k^{4g-4}m_{\widehat{\Gamma}} \text{ and } m_\Gamma^\refined(q)=m_{\widehat{\Gamma}}^\refined(q^{k^2}).$$
There is a bijection between the set of genus $g$ tropical curves having gcd $k|C$ in the class $C$ passing through $\P$, and the set of genus $g$ tropical curve of gcd $1$ in the class $C/k$: the images are the same, only the weights (and thus lengths) are changed. We deduce equalities of the first row.

The remaining identities are obtained summing over all the divisors of $C$. Only the last is obtained differently:
\begin{align*}
R_{g,C} & = \sum_{\delta| C}\sum_{\Gamma:\delta_\Gamma=\delta}\sum_{k|\delta}k^{2g-2}\varphi(k)m^\refined_\Gamma(q^{1/k}) \\
& = \sum_{\delta| C}\sum_{\Gamma:\delta_\Gamma=1}\sum_{k|\delta}k^{2g-2}\varphi(k)m^\refined_\Gamma(q^{\delta^2/k}) \\
& = \sum_{\delta| C}\sum_{k|\delta}k^{2g-2}\varphi(k)BG_{g,C/\delta,1}(q^{\delta^2/k}) \\
& =\sum_{lk|C}k^{2g-2}\varphi(k) BG_{g,C/kl,1}(q^{kl^2}). \\
\end{align*}
\end{proof}

The last identities may be interpreted as convolution between the following functions over the set of degrees:
$$\begin{array}{rccl}
N_g: & C & \longmapsto & N_{g,C} , \\
M_g: & C & \longmapsto & M_{g,C} , \\
N^\prim_g: & C & \longmapsto & N_{g,C,1} . \\
\end{array}$$
So that the end of Proposition \ref{proposition relation invariants counts point case} rewrites
$$\begin{array}{rcl}
M_g &=& \epsilon_{4g-4}\ast N_g^\prim ,\\
N_g &=& \epsilon_{4g-3}\ast N_g^\prim .\\
\end{array}$$

\subsubsection{Curves in a fixed linear system.} The second enumerative problem is to count genus $g$ curves in a class in a fixed linear system that pass through a generic configuration $\P$ of $g-2$ points inside $\TT A$. According to \cite{blomme2022abelian2}, tropical curves solution are simple and the complement of the marked points retracts on a subgraph of genus $2$ that we denote by $\Sigma$. We have the following counts that, according to \cite{blomme2022abelian2}, do not depend on the choice of the point configuration $\P$, the linear system, nor the abelian surface $\TT A$, as long as these choices are generic:

$$\begin{array}{lr>{\displaystyle}lr>{\displaystyle}l}
\triangleright \text{ count with fixed gcd: } & N^{FLS}_{g,C,k}= & \sum_{\substack{\Gamma\supset\P \\ \delta_\Gamma=k}}\Lambda_\Gamma^\Sigma m_\Gamma, & BG^{FLS}_{g,C,k}= & \sum_{\substack{\Gamma\supset\P \\ \delta_\Gamma=k}} \Lambda_\Gamma^\Sigma m_\Gamma^\refined, \\
\triangleright \text{ count with true multiplicity: } & N^{FLS}_{g,C}= & \sum_{\Gamma\supset\P} \delta_\Gamma \Lambda_\Gamma^\Sigma m_\Gamma, & R^{FLS}_{g,C}= & \sum_{\Gamma\supset\P} \Lambda_\Gamma^\Sigma M_\Gamma, \\
\triangleright \text{ count with auxiliary multiplicity: } & M^{FLS}_{g,C}= & \sum_{\Gamma\supset\P}\Lambda_\Gamma^\Sigma m_\Gamma, & BG^{FLS}_{g,C}= & \sum_{\Gamma\supset\P}\Lambda_\Gamma^\Sigma m_\Gamma^\refined. \\
\end{array}$$

As in the case of curves passing through $g$ points, we use the second refined multiplicity $\Lambda_\Gamma^\Sigma M_\Gamma$ as a refinement of the complex multiplicity $\delta_\Gamma\Lambda_\Gamma^\Sigma m_\Gamma$ rather than the naive refinement $\delta_\Gamma\Lambda_\Gamma^\Sigma m_\Gamma^\refined$. The invariance using the second refined multiplicity follows from its relation to the usual one.

\begin{prop}
For any $k|C$, we have
$$\begin{array}{>{\displaystyle}r>{\displaystyle}l>{\displaystyle}r>{\displaystyle}l}
N^{FLS}_{g,C,k} & =k^{4g-2} N_{g,C/k,1}, & BG^{FLS}_{g,C,k} & =k^{2} BG_{g,C/k,1}(q^{k^2}). \\
\end{array}$$
Furthermore, we have the following identities:
$$\begin{array}{>{\displaystyle}r>{\displaystyle}l>{\displaystyle}r>{\displaystyle}l}
M^{FLS}_{g,C} & =\sum_{k|C}k^{4g-2}N^{FLS}_{g,C/k,1}, & BG^{FLS}_{g,C} & =\sum_{k|C} k^2 BG^{FLS}_{g,C/k,1}(q^{k^2}), \\
N^{FLS}_{g,C} & =\sum_{k|C}k^{4g-1}N^{FLS}_{g,C/k,1}, & R^{FLS}_{g,C} & =\sum_{kl|C} k^{2g}\varphi(k)BG^{FLS}_{g,C/kl,1}(q^{kl^2}). \\
\end{array}$$
\end{prop}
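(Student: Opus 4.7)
The proof mirrors the template of the preceding Proposition for the point case. One reuses the same bijection between simple tropical curves $h:\Gamma\to\TT A$ in class $C$ with gcd $k$ passing through the $(g-2)$-point configuration $\P$, and primitive tropical curves $\widehat{h}:\widehat{\Gamma}\to\TT A$ of class $C/k$ through $\P$: the images coincide in $\TT A$, the weights of every edge are divided by $k$, and the lengths are multiplied by $k$. The scalings $m_\Gamma=k^{4g-4}m_{\widehat{\Gamma}}$ and $m_\Gamma^{\refined}(q)=m_{\widehat{\Gamma}}^{\refined}(q^{k^2})$ are then immediate from the vertex-by-vertex identity $m_V=k^{2}\widehat{m}_V$ on each of the $2g-2$ trivalent vertices of $\Gamma$.

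The new ingredient compared to the point case is the interaction of the factor $\Lambda_\Gamma^\Sigma$ with the fixed linear system constraint. The genus-$2$ subgraph $\Sigma$ is the same topological subgraph in both $\Gamma$ and $\widehat{\Gamma}$, and its image in $\TT A$ is unchanged by the rescaling since the displacement $\sum_e \epsilon_e \ell_e w_e u_e$ of each cycle of $\Sigma$ is preserved, so the index of $H_1(\Sigma,\ZZ)$ in $H_1(\TT A,\ZZ)$ is itself invariant. The extra factor $k^{2}$ arises from the linear system: the multiplication-by-$k$ map on $\mathrm{Pic}^{0}(\TT A)$, which is itself a $2$-dimensional tropical torus, has kernel of cardinality $k^{2}$, so a fixed generic linear system $L$ of class $C$ admits exactly $k^{2}$ preimages $L'$ of class $C/k$ under multiplication by $k$ on $\mathrm{Pic}$. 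The set of curves in $L$ of class $C$ with gcd $k$ is then in bijection with the disjoint union over these $k^{2}$ lifts of primitive curves of class $C/k$ belonging to $L'$. Combining with the invariance of $N^{FLS}_{g,C/k,1}$ and $BG^{FLS}_{g,C/k,1}$ over the choice of lift, one deduces
\[
N^{FLS}_{g,C,k}=k^{2}\cdot k^{4g-4}\cdot N^{FLS}_{g,C/k,1}=k^{4g-2}N^{FLS}_{g,C/k,1}, \qquad BG^{FLS}_{g,C,k}=k^{2}\,BG^{FLS}_{g,C/k,1}(q^{k^{2}}).
\]

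The remaining four identities follow by summing the gcd-refined counts over divisors $k|C$: the formulas for $M^{FLS}_{g,C}$ and $BG^{FLS}_{g,C}$ are immediate summations; the formula for $N^{FLS}_{g,C}$ incorporates the extra factor $\delta_\Gamma=k$ from the true complex multiplicity, yielding the exponent $4g-1$; and the formula for $R^{FLS}_{g,C}$ requires the double-sum rewriting already used in the point-case proof, namely expanding $\Lambda_\Gamma^{\Sigma}M_\Gamma=\sum_{k|\delta_\Gamma}k^{2g-2}\varphi(k)\Lambda_\Gamma^\Sigma m_\Gamma^{\refined}(q^{1/k})$, partitioning the sum over $\Gamma$ by $\delta_\Gamma=\delta$, substituting the scalings, and finally reindexing via $\delta=kl$. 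The main technical obstacle is the bookkeeping around the tropical Picard group: one must verify rigorously that the $k^{2}$ lifts of a generic linear system remain generic, so that the invariance results from \cite{blomme2022abelian2} apply uniformly across all $k^2$ lifts, and that the disjoint union of primitive curves over these lifts exhausts exactly the set of gcd-$k$ curves in $L$. Once this is established, the remaining arithmetic manipulations are entirely parallel to the point case.
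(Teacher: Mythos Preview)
Your proposal is correct and follows essentially the same approach as the paper: both argue via the bijection $\Gamma\leftrightarrow\widehat{\Gamma}$, observe that $\Lambda_\Gamma^\Sigma$ is unchanged under rescaling, and account for the extra $k^2$ via the $k$-torsion of the tropical Picard group $\mathrm{Pic}(\TT A)=\Lambda^*_\RR/M$. Your write-up is in fact more explicit than the paper's on two points the paper leaves implicit---the invariance of $\Lambda_\Gamma^\Sigma$ and the genericity of the $k^2$ lifted linear systems---but the underlying argument is the same.
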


\begin{proof}
The proof is similar to Proposition \ref{proposition relation invariants counts point case} with some small differences. A curve $\Gamma$ of genus $g$ in the class $C$ and gcd $k$ is obtained multiplying by $k$ a curve of gcd $1$ in the class $C/k$. However, if $\Gamma$ belongs to a fixed linear system $\L$, the curve $\Gamma/k$ belongs to one of the $k^2$ linear system $\L_0$ such that $\L_0^{\otimes k}=\L$. This is due to the torsion of the tropical Picard group $\mathrm{Pic}(\TT A)=\Lambda^*_\RR/M$. This accounts for the additional $k^2$ for identities in the first row. Remaining identities are obtained by adding the homogeneity relations of the first row for $k|C$ as in Proposition \ref{proposition relation invariants counts point case}.
\end{proof}

\subsection{Stretching in abelian surfaces}

\subsubsection{Complex setting for primitive classes.} In \cite{bryan1999generating}, the authors compute the complex invariants $\N_{g,C}$ for a primitive class $C$. As the answer does not depend on the chosen abelian surface, they choose an abelian surface that is a product of two generic elliptic curves $E_1\times E_2$, and then compute the invariants for the class $[E_1]+n[E_2]$, corresponding to the matrix $C=\left(\begin{smallmatrix}
1 & 0 \\
0 & n \\
\end{smallmatrix}\right)$ in our setting. The computation is made possible by the fact that stable maps $h:\CCC\to E_1\times E_2$ of genus $g$ that passing through $g$ points have a very special form: the projection on the first factor gives a degree $1$ map from $\CCC$ to $E_1$. Thus, $\CCC$ is reducible and is the union of a fiber of the projection to $E_2$ (that projects one to one onto $E_1$), and several covers of the fibers of the projection to $E_1$. The degree $n$ has to be split among the various components. We get
\begin{align*}
\N_{g,\left(\begin{smallmatrix}
1 & 0 \\
0 & n \\
\end{smallmatrix}\right)} & =g\sum_{a_1+\cdots+a_{g-1}=n}\prod_{i=1}^{g-1}a_i\sigma_1(a_i), \\
\N^{FLS}_{g,\left(\begin{smallmatrix}
1 & 0 \\
0 & n \\
\end{smallmatrix}\right)} & =\sum_{a_1+\cdots+a_{g-2}+a_\infty=n} a_\infty^2\sigma_1(a_\infty)\prod_{i=1}^{g-2}a_i\sigma_1(a_i), \\
\end{align*}
where $\sigma_1(a)=\sum_{d|a}d$. However, the method does not apply for the classes $d_1[E_1]+d_2[E_2]$ since solutions are not necessarily a union of elliptic curves.

\subsubsection{Tropical rectangular abelian surfaces.} Tropically, looking at a product of tropical elliptic curves means that we look at the ``rectangular" abelian surface
$$\TT A_{L,l}=\RR^2/\left\langle Le_1,le_2\right\rangle = \RR/L\ZZ\times\RR/l\ZZ.$$
In other words, the map $S:\Lambda\to \RR^2\simeq N_\RR$ is diagonal in the canonical basis of $\RR^2$: $\left(\begin{smallmatrix}
L & 0 \\
0 & l \\
\end{smallmatrix}\right)$. The computation of \cite{bryan1999generating} does not translate well to the tropical setting since the curves in the class $\left(\begin{smallmatrix}
1 & 0 \\
0 & n \\
\end{smallmatrix}\right)$ are of the form depicted on Figure \ref{figure product of elliptic curve} $(a)$: one has a horizontal component and several vertical components with a splitting of $n$ among the different fibers. In particular, the curves are not simple and lack a well-defined multiplicity.

\begin{figure}[h]
\begin{center}
\begin{tabular}{c}
\begin{tikzpicture}[line cap=round,line join=round,>=triangle 45,x=0.5cm,y=0.5cm]
\clip(-0.5,-0.5) rectangle (21,5);

\draw [line width=1pt] (0,0)-- ++(20,0) -- ++(0,4)-- ++(-20,0) --++(0,-4);

\draw [line width=1.5pt] (0,1.5)-- ++(20,0);

\draw [line width=1.5pt] (4,0)--++ (0,4);
\draw [line width=2pt] (9,0)--++ (0,4);

\draw [line width=2pt] (16,0)--++ (0,4);

\draw (16,3) node[right] {$2$};
\draw (9,3) node[right] {$3$};

\end{tikzpicture} \\
(a) \\
\begin{tikzpicture}[line cap=round,line join=round,>=triangle 45,x=0.5cm,y=0.5cm]
\clip(-0.5,-0.5) rectangle (21,5);

\draw [line width=1pt] (0,0) node{$\bullet$} -- ++(19.8,0) node{$\bullet$}  --++ (0.2,1.2) node{$\bullet$} -- ++(0,2.8) node{$\bullet$} -- ++(-19.8,0) node{$\bullet$}  --++ (-0.2,-1.2) node{$\bullet$} -- ++(0,-2.8);

\draw [line width=1.5pt] (0,1.5)-- (4,1.5)-- ++(0.2,0.2)-- (9,1.7)-- ++(0.6,0.6)-- (16,2.3)-- ++(0.2,0.4)-- (20,2.7);

\draw [line width=1.5pt] (4,0)-- (4,1.5) ++(0.2,0.2)-- (4.2,4);
\draw [line width=1.5pt] (9,0)-- (9,1.7) ++(0.6,0.6)-- (9.6,4);
\draw [line width=1.5pt] (9.2,0)-- ++(0,4);
\draw [line width=1.5pt] (9.4,0)-- ++(0,4);
\draw [line width=2pt] (16,0)-- (16,2.3) ++(0.2,0.4)-- (16.2,4);

\draw (16.2,3.5) node[right] {$2$};

\end{tikzpicture} \\
(b) \\
\end{tabular}
\caption{\label{figure product of elliptic curve}A tropical curve realizing the class $(1,6)$ admitting a pearl decomposition in a ``rectangular" abelian surface (a) and inside a small deformation of it (b).}
\end{center}
\end{figure}
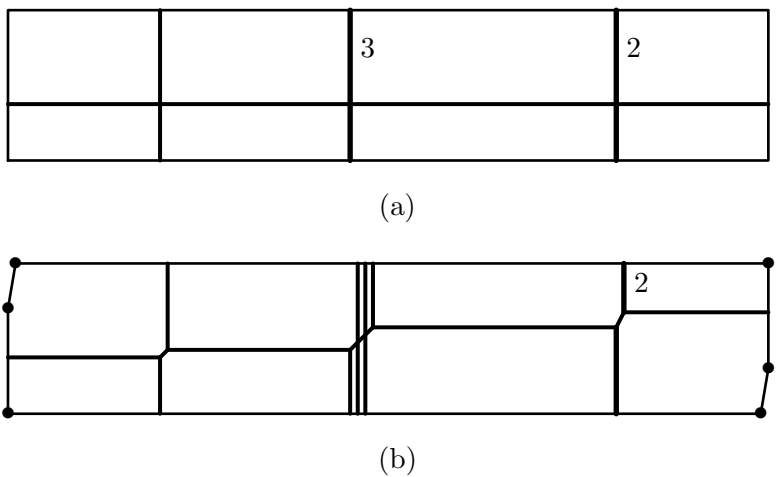

\subsubsection{Stretched tropical abelian surfaces.} The problem of ``rectangular" abelian surfaces is that they are not generic, and thus not suited for the tropical computations. Already in the above primitive case, it is not possible to compute the refined invariant since the curves are not trivalent. One way to handle this problem is to slightly deform the tropical abelian surface, so that the considered class stays realizable. Take a basis of $\Lambda$ and $N$ and choose $\Lambda\subset N_\RR$ to be the image of $\left(\begin{smallmatrix}
L & d_1\varepsilon \\
d_2\varepsilon & l \\
\end{smallmatrix}\right)$, with $L,l,\varepsilon\in\RR$. It contains curves in the class $\left(\begin{smallmatrix}
d_1 & 0 \\
0 & d_2 \\
\end{smallmatrix}\right)$ A fundamental domain in $\RR^2$ of the torus $\TT A$ can be taken to be an hexagon with pairs of opposite sides glued together, as depicted on Figure \ref{figure product of elliptic curve} $(b)$. We see that the curve inside is now trivalent.

\medskip

Consider instead the class $\left(\begin{smallmatrix}
d_1 & a \\
0 & d_2 \\
\end{smallmatrix}\right)$ for some non-zero top-right coefficient. We later care specifically for the cases $a=1$. The matrices $S$ satisfying $CS^T$ being symmetric are of the form
$$\begin{pmatrix}
L & l a/d_2 \\
0 & l \\
\end{pmatrix} +\varepsilon \begin{pmatrix}
0 & d_1 \\
d_2 & 0 \\
\end{pmatrix},\text{ with }\varepsilon,l,L\in\RR.$$

If $\varepsilon=0$, the corresponding abelian surface can be seen as the gluing of a rectangle as depicted on Figure \ref{figure product of elliptic curve deformed}(a). Changing $\varepsilon$ means slightly deforming the rectangle into an hexagon whose opposite edges are glued together, as can be seen on Figure \ref{figure product of elliptic curve deformed}(b). We now come to the stretching.

\begin{figure}
\begin{center}
\begin{tabular}{cc}
\begin{tikzpicture}[line cap=round,line join=round,>=triangle 45,x=0.5cm,y=0.5cm]
\clip(-0.5,-0.5) rectangle (10.5,4.5);

\draw [line width=1pt] (0,0) node{$\bullet$} -- node{$>$} ++(8,0) node{$\bullet$} -- node{$\gg$} ++(2,0) node{$\bullet$} -- node[sloped]{$>|$} ++(0,4) node{$\bullet$};
\draw [line width=1pt] (0,0) -- node[sloped]{$>|$} ++(0,4) node{$\bullet$} -- node{$\gg$} ++(2,0) node{$\bullet$} -- node{$>$} ++(8,0) ;
\end{tikzpicture} &
\begin{tikzpicture}[line cap=round,line join=round,>=triangle 45,x=0.5cm,y=0.5cm]
\clip(-0.5,-0.5) rectangle (10.5,4.8);

\draw [line width=1pt] (0,0) node{$\bullet$} -- node{$>$} ++(8,0) node{$\bullet$} -- node[sloped]{$\gg$} ++(2,0.4) node{$\bullet$} -- node[sloped]{$>|$} ++(0,4) node{$\bullet$};
\draw [line width=1pt] (0,0) -- node[sloped]{$>|$} ++(0,4) node{$\bullet$} -- node[sloped]{$\gg$} ++(2,0.4) node{$\bullet$} -- node{$>$} ++(8,0) ;
\end{tikzpicture} \\
(a) & (b) \\
\end{tabular}
\caption[product of elliptic curve deformed]{\label{figure product of elliptic curve deformed}A tropical torus associated to the matrix $\left(\begin{smallmatrix}
L & la/d_2 \\
0 & l \\
\end{smallmatrix}\right)$ and a small deformation.}
\end{center}
\end{figure}
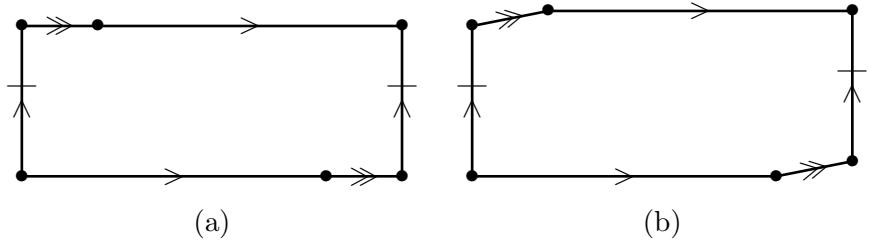

\begin{defi}
Let $C=\left(\begin{smallmatrix}
d_1 & a \\
0 & d_2 \\
\end{smallmatrix}\right)$ be a fixed class.
\begin{itemize}
\item A tropical abelian surface with a given curve class $C$ is \textit{stretched} if $S:\Lambda\to N_\RR$ is of the form $\left(\begin{smallmatrix}
L & la/d_2 + d_1\varepsilon \\
d_2\varepsilon & l \\
\end{smallmatrix}\right),$ with $L\gg l\gg\varepsilon$.
\item A point configuration $\P$ of $g$ points inside a stretched tropical abelian surface is \textit{stretched} if the difference between the horizontal coordinates in the fundamental domain is far bigger than $l$: let $(x_i,y_i)$ be the coordinates inside the hexagon, it means
$$L,|x_i-x_j| \gg l\gg\varepsilon.$$
\item A point configuration $\P$ of $g-2$ points inside a stretched tropical abelian surface is \textit{doubly stretched} if the difference between the horizontal coordinates in the fundamental domain is far bigger than $l$, and far smaller than $L$:
$$L\gg |x_i-x_j| \gg l\gg\varepsilon.$$
\end{itemize}
\end{defi}


\subsubsection{Bounded lengths in stretched abelian surfaces.} We finish this section with a lemma concerning the lengths of the edges of tropical curves in a stretched abelian surface. It plays a role similar to Lemma \cite[Lemma 5.8]{blomme2021floor}. Before going to the Lemma, notice that in the above stretched tropical abelian surfaces, the boundary of the hexagonal fundamental domain is in fact the image of a genus $2$ tropical curve: the slopes of the edges are respectively $\left(\begin{smallmatrix}
d_1 \\
0 \\
\end{smallmatrix}\right),\left(\begin{smallmatrix}
0 \\
d_2 \\
\end{smallmatrix}\right)$ and $\left(\begin{smallmatrix}
d_1 \\
d_2 \\
\end{smallmatrix}\right)$. We call this curve $\Gamma_0$.

\begin{lem}\label{lem-finite-length}
Let $h:\Gamma\to\TT A$ be a parametrized tropical curve realizing the class $C$ in the stretched abelian surface $\TT A$. The length of a non-horizontal edge is bounded by a constant that only depends on $C,l$, not on $L$ nor the curve.
\end{lem}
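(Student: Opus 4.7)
The plan is to apply tropical Bezout to $\Gamma$ against the reference genus-$2$ curve $\Gamma_0$ (whose image is the boundary of the hexagonal fundamental domain, realizing the class $C$) and then to convert the resulting global intersection bound into a bound on how many times the edge $e$ wraps vertically. First, Lemma~\ref{lem-finite-slopes} guarantees that the slope $u_e = (a_e, b_e)\in N$ lies in a finite set depending only on $C$; since $e$ is non-horizontal, $b_e\in\ZZ\setminus\{0\}$, so $|b_e|\geq 1$ and $|a_e|, |b_e|$ are bounded above by a constant depending only on $C$. After translating $\Gamma_0$ by a generic small vector to achieve transversality with $\Gamma$, tropical Bezout gives $\Gamma\cdot\Gamma_0 = C^2$, a quantity depending only on $C$.

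I would then use that, in the universal cover $\RR^2$, the $\Lambda$-translates of the image of $\Gamma_0$ tile the plane by copies of the hexagonal fundamental domain, each of vertical extent $l + O(\varepsilon)$. Lifting $e$ to a straight segment $\tilde{e}\subset\RR^2$ of length $\ell_e$ in direction $u_e$, its vertical span is $\ell_e|b_e|$, so $\tilde{e}$ must leave at least $\lfloor \ell_e|b_e|/l\rfloor - O(1)$ distinct hexagonal cells of the tessellation. Each such cell exit is a transverse crossing of $\tilde{e}$ with a lift of one of the three edges of $\Gamma_0$, of slopes $(d_1,0)$, $(0,d_2)$ and $(d_1,d_2)$, and contributes weight $|\det(u_e, u_{\Gamma_0})|\geq 1$ to $\Gamma\cdot\Gamma_0$. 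If $u_e$ happens to be proportional to one of these three slopes, that family produces no transverse crossings, but the two remaining families still account for every cell exit with weight at least $1$; the hypothesis $b_e\neq 0$ is precisely what ensures that at least one of the three determinants remains nonzero on $u_e$. Combining these estimates yields
$$\frac{\ell_e|b_e|}{l}\leq C^2 + O(1),$$
and hence a bound $\ell_e\leq R(C,l)$ independent of $L$ and of the particular curve.

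The main obstacle, parallel to \cite[Lemma 5.8]{blomme2021floor}, will be the combinatorial lower bound on the number of cell exits: verifying that a straight segment of vertical span $\ell_e|b_e|$ in $\RR^2$ really must leave at least $\lfloor \ell_e|b_e|/l\rfloor - O(1)$ hexagonal cells, with an error term controlled only in terms of $C$. I plan to establish this by projecting onto the cylinder $\RR^2/\langle\lambda_1\rangle$, whose vertical period is exactly $l$, and observing that each full vertical period of $\tilde{e}$ forces it to cross at least one boundary of the induced tessellation; the degenerate cases where $u_e$ is proportional to one of the slopes of $\Gamma_0$ will need a brief case analysis to confirm that the remaining two families of cell boundaries still supply sufficiently many transverse crossings of weight at least $1$.
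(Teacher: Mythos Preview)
Your approach is correct and coincides with the paper's: both intersect the lifted edge $\tilde{e}$ with the honeycomb lattice obtained by lifting $\Gamma_0$, and use tropical Bezout to bound the number of crossings by $C^2$. The paper's version is slightly leaner because it counts only intersections with the \emph{horizontal} edges of the honeycomb: since $b_e\neq 0$, the slope $u_e$ is never proportional to $(d_1,0)$, so this single family already gives at least $|b_e|\ell_e/l$ transverse crossings (using $L\gg l$ so the horizontal edges span the full width), and your anticipated case analysis for $u_e$ proportional to one of the three slopes becomes unnecessary.
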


\begin{proof}
Consider the genus $2$ curve $\Gamma_0$ in the class $C$. It has an horizontal edge $e_0$. The preimage of $h_0(\Gamma_0)$ by the covering map $N_\RR\to\TT A$ is a honeycomb lattice. By Lemma \ref{lem-finite-slopes}, we know that the slopes may take a finite number of values. Let $h:\Gamma\to\TT A$ be a curve in the class $C$ and let $e$ be an edge of $\Gamma$ with slope $\binom{a}{b}$, $b\neq 0$, and length $\ell_e$. Consider a lift $\widetilde{e}$ of $e$ by the covering map $N_\RR\to\TT A$. By stretching, we have that $L\gg l$, and hence $\frac{L}{l}$ is big enough in front of the finitely possible slopes that can take the edges. Therefore, we have at least $\frac{|b|\ell_e}{l}$ intersection points between $\widetilde{e}$ and hrizontal edges in the honeycomb lattice, as can be seen on Figure \ref{fig-honeycomb}, and each one yields an intersection point between $e$ and $h_0(\Gamma_0)$. As $|b|\geqslant 1$ and the intersection index is at least one, we have $\ell_e\leqslant l C^2$.
\end{proof}

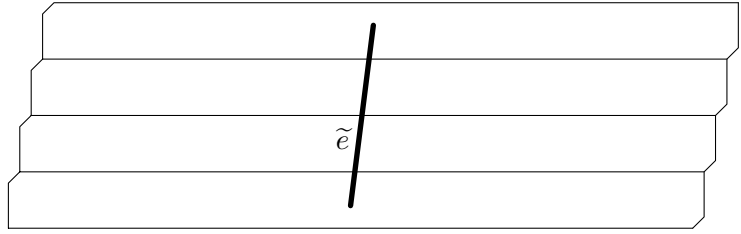
\begin{figure}
\centering
\begin{tikzpicture}[line cap=round,line join=round,>=triangle 45,x=0.3cm,y=0.3cm]
\draw (0,0)--++(30,0)++(-30,0)--++(0,2)--++(0.5,0.5)--++(30,0)++(-30,0)--++(0,2)--++(0.5,0.5)--++(30,0)++(-30,0)--++(0,2)--++(0.5,0.5)--++(30,0)++(-30,0)--++(0,2)--++(0.5,0.5)--++(30,0)++(-30,0);
\draw (30,0)--++(0.5,0.5)--++(0,2)--++(0.5,0.5)--++(0,2)--++(0.5,0.5)--++(0,2)--++(0.5,0.5)--++(0,2);

\draw[line width=2pt] (15,1)--++(1,8) node[midway,below left] {$\widetilde{e}$};
\end{tikzpicture}
\caption{\label{fig-honeycomb}Intersection between a genus $2$ curve and the lift of an edge of some tropical curve in Lemma \ref{lem-finite-length}.}
\end{figure}

\section{Pearl diagrams for curves passing through $g$ points}

This section is comprised of four steps. The first section is how to get a pearl diagram from a tropical curve in a stretched abelian surface. the second part is especially technical and consists in recovering the set of tropical curves encoded in a fixed pearl diagram. With this data, in the next section, we derive different multiplicities for the pearl diagrams, so that their count using  these multiplicities give the desired invariants. Last, we prove the multiple cover formulas for the punctual case.

\subsection{From curves to pearl diagrams}

Consider a stretched tropical abelian surafce $\TT A$. Its first homology group $\Lambda$ is endowed with a natural basis $(\lambda_1,\lambda_2)$, represented by paths going in the horizontal and vertical direction. Let $h:\Gamma\to\TT A$ be a genus $g$ parametrized tropical curve in the class $C$ passing through a stretched configuration of $g$ points $\P=\{p_1,\cdots,p_g\}$ in $\TT A$. Let $\gamma\subset\TT A$ be a cycle in the class $\lambda_2$ that passes between the marked points $p_g$ and $p_1$. We now describe how to get a diagram from $(\Gamma,h)$. 
\begin{itemize}
\item The edges of $\Gamma$ with horizontal slope are called \textit{horizontal edges}. A component of the complement of the horizontal edges is called a \textit{pearl}.
\item The \textit{diagram} $\Dfk$ of $\Gamma$ is the quotient graph of $\Gamma$ where each pearl has been contracted to a single vertex. It has two kinds of vertices: those coming from contraction of components (also called pearls), and marked points lying on horizontal edges.
\item Two vertices of $\Dfk$ are linked if the corresponding parts of $\Gamma$ are adjacent to the same horizontal edge.
\item The weight $w_e$ of an edge in $\Dfk$ is its weight as an edge of $\Gamma$, \textit{i.e.} the lattice length of the slope of $h$ on it. Edges are oriented such that their slope has a positive coordinate in $\ZZ^2$.
\item The length $\ell_e$ of an edge $e$ is the number of times its image intersects the cycle $\gamma\subset\TT A$.
\item The degree $d_\pfk$ of a pearl is obtained as follows: take a path $\alpha$ in the class $\lambda_1$ transverse to $\pfk$, orient the edges of $\pfk$ so that they intersect positively and add their slopes. The result depend on the choice of $\alpha$ but its vertical coordinate does not by balancing condition. This vertical coordinate is $d_\pfk$.
\item The marked points $\{p_1,\dots,p_g\}$ induce a labeling of vertices of $\Dfk$ by $\{1,\dots,g\}$, although a priori, a pearl may receive several labels or none.
\end{itemize}

\begin{expl}
On Figure \ref{figure example curve to pearl} we can see a tropical curve admitting a pearl decomposition and the corresponding pearl diagram. The degree of the pearls labeled 1,3,4 are respectively $d_1=1$, $d_3=3$ and $d_4=2$.
\end{expl}

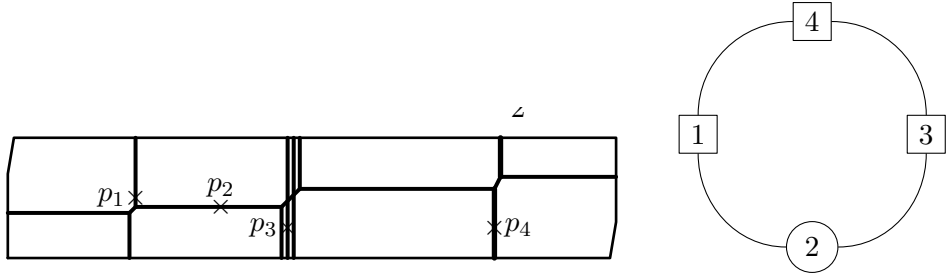
\begin{figure}
\begin{center}
\begin{tabular}{cc}
\begin{tikzpicture}[line cap=round,line join=round,>=triangle 45,x=0.4cm,y=0.4cm]
\clip(-0.5,-0.5) rectangle (21,5);

\draw [line width=1pt] (0,0)-- ++(19.8,0) --++ (0.2,1.2)-- ++(0,2.8)-- ++(-19.8,0) --++ (-0.2,-1.2)-- ++(0,-2.8);

\draw [line width=1.5pt] (0,1.5)-- (4,1.5)-- ++(0.2,0.2)-- (9,1.7)-- ++(0.6,0.6)-- (16,2.3)-- ++(0.2,0.4)-- (20,2.7);

\draw [line width=1.5pt] (4,0)-- (4,1.5) ++(0.2,0.2)-- (4.2,4);
\draw [line width=1.5pt] (9,0)-- (9,1.7) ++(0.6,0.6)-- (9.6,4);
\draw [line width=1.5pt] (9.2,0)-- ++(0,4);
\draw [line width=1.5pt] (9.4,0)-- ++(0,4);
\draw [line width=2pt] (16,0)-- (16,2.3) ++(0.2,0.4)-- (16.2,4);

\draw (16.2,5) node[right] {$2$};
\draw (4.2,2) node {$\times$} node[left] {$p_1$};
\draw (7,1.7) node {$\times$} node[above] {$p_2$};
\draw (9.2,1) node {$\times$} node[left] {$p_3$};
\draw (16,1) node {$\times$} node[right] {$p_4$};

\end{tikzpicture}
&
\begin{tikzpicture}[line cap=round,line join=round,x=0.75cm,y=0.75cm]
	\pearl (1) at (180:2) label=1;
	\flatpearl (2) at (-90:2) label=2;
	\pearl (3) at (0:2) label=3;
	\pearl (4) at (90:2) label=4;
	
	\draw (1) to[out=-90,in=180] (2);
	\draw (2) to[out=0,in=-90] (3);
	\draw (3) to[out=90,in=0] (4);
	\draw (1) to[out=90,in=180] (4);

	\end{tikzpicture}
\\
\end{tabular}
\caption{\label{figure example curve to pearl}Example of a tropical curve and the associated pearl diagram.}
\end{center}
\end{figure}

We now prove that the diagram obtained from a tropical curve passing through $\P$ is indeed a pearl diagram of the right genus and degree.

\begin{prop}\label{proposition p marked diagram}
Let $h:\Gamma\to\TT A$ be a genus $g$ parametrized tropical curve in the class $\left(\begin{smallmatrix} d_1 & a \\
0 & d_2\\
\end{smallmatrix}\right)$ passing through the stretched configuration $\P$ of $g$ points. Let $\Dfk$ be the diagram obtained from $h:\Gamma\to\TT A$. Then $\Dfk$ is a pearl diagram of genus $g$ and bidegree $(d_1,d_2)$. 
Furthermore, each pearl $\pfk\subset\Gamma$ is homeomorphic to a circle.
\end{prop}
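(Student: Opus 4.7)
The plan is to first establish the ``furthermore'' part (each pearl is a circle), since this is the structural core of the proof; the remaining claims of Definition \ref{def-pearl-diagram} together with the bidegree and genus then follow by bookkeeping. The key inputs for the core are Lemma \ref{lem-finite-length}, which bounds the length of any non-horizontal edge by a constant depending only on $C$ and $l$, and the stretching of $\P$, which forces the horizontal distances between marked points to be $\gg l$. Since each pearl $\pfk$ is a connected graph of bounded combinatorial complexity (for fixed genus $g$ and class $C$) whose edges are all non-horizontal and therefore short, its horizontal extent is $O(l)$, so it contains at most one marked point. On the other hand, simplicity of $(\Gamma,h)$ prevents a trivalent vertex from having three incident horizontal edges (their slopes would all lie in $\RR e_1$ and fail to span $N_\RR$), so every vertex of $\pfk$ has pearl-valency $2$ or $3$. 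Writing $a_\pfk$ and $b_\pfk$ for the respective counts of such vertices, the identity $2|E(\pfk)|=2a_\pfk+3b_\pfk$ together with $|V(\pfk)|=a_\pfk+b_\pfk$ gives
\[
b_1(\pfk) \;=\; E(\pfk) - V(\pfk) + 1 \;=\; \tfrac{b_\pfk}{2} + 1 \;\geqslant\; 1.
\]
Combined with the fact that $\Gamma\setminus\P$ is a tree (recalled in Section \ref{section tropical invariants} from \cite{blomme2022abelian1}), which forces each cycle of $\Gamma$ to contain a marked point, one obtains $b_1(\pfk)\leqslant 1$. The resulting pinch $b_1(\pfk)=1$, $b_\pfk=0$ forces $\pfk$ to be a topological circle, each of whose vertices has exactly one horizontal edge attached, and to contain exactly one marked point.

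Given this, I would then check the items of Definition \ref{def-pearl-diagram}. Condition (a) reduces to $|V_\circ|+|V_\square|=g$, which is now immediate. Weights $w_e\in\ZZ_{>0}$ in (b) are inherited from $\Gamma$, and the balancing of $\Dfk$ at a pearl vertex follows by summing the balancing of $\Gamma$ around the pearl cycle: non-horizontal slopes telescope, and one is left with the equality between the total weights of incoming and outgoing horizontal edges. The integer length $\ell_e\geqslant 0$ in (c) counts intersections with the vertical cycle $\gamma$, and the sign condition is a translation of the cyclic ordering $0<x_1<\dots<x_g<1$ together with the ``positive horizontal slope'' orientation of horizontal edges. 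For (d), among the $g$ independent cycles of $\Gamma$ exactly $|V_\square|$ are pearl cycles (broken by the pearl marked points), so the remaining $|V_\circ|$ diagram-level cycles must each contain a flat vertex; contracting pearls and removing the flat vertices therefore yields a connected acyclic graph.

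For the bidegree, choosing $\gamma$ in the gap between $p_g$ and $p_1$ (of horizontal width much larger than the pearl extent) keeps $\gamma$ disjoint from every pearl, so only horizontal edges cross $\gamma$, giving $\sum_e w_e\ell_e = d_1$ via the computation of the first column of $C$. The identity $\sum_\pfk d_\pfk = d_2$ is obtained analogously by intersecting $h(\Gamma)$ with a horizontal cycle and extracting the vertical component. Positivity $d_\pfk>0$ follows from the orientation convention in its definition: once the pearl edges crossing $\alpha$ are oriented to cross positively, each of them has strictly positive vertical slope coordinate. The genus identity $b_1(\Dfk)+|V_\square(\Dfk)|=g$ is then Lemma \ref{lem-genus-pearl-diagram}. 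The only genuinely nontrivial step is the pinch argument in the first paragraph: the combinatorial lower bound $b_1(\pfk)\geqslant 1$ forces each pearl to carry a cycle, while the geometric upper bound from the stretching limits it to just one, and it is this two-sided squeeze that yields the circle structure.
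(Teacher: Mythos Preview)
Your argument is essentially correct and tracks the paper's outline, but the core step---showing each pearl is a circle carrying exactly one marked point---is handled differently, and one claim is not justified by the paper's definitions.

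You deduce pearl-valency $\in\{2,3\}$ from the assertion that simplicity forbids a trivalent vertex with three horizontal edges, because ``their slopes would fail to span $N_\RR$''. With the paper's definition (simple $=$ trivalent $+$ $h$ an immersion, the latter meaning nonzero slope on each edge), a vertex with slopes $3e_1,-e_1,-2e_1$ is not excluded; it has $m_V=0$ but is still ``simple''. You also do not say why pearl-valency $1$ is impossible (it is, by balancing: two horizontal outgoing slopes force the third to be horizontal). The paper avoids the valency count entirely. It first shows every pearl carries a marked point by a deformation argument: an unmarked pearl can be translated horizontally, producing a one-parameter family through $\P$ and contradicting genericity of $\P$. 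It then shows a pearl has no disconnecting edge: summing the balancing conditions over all vertices on one side of a putative bridge forces that bridge to have horizontal slope, a contradiction. Together with $b_1(\pfk)\leqslant 1$ (obtained, as in your proof, from $\Gamma\setminus\P$ being a tree), this forces $\pfk$ to be a circle. Your Euler-characteristic pinch is a clean alternative once pearl-valency $\geqslant 2$ is established, but in this paper's setup that lower bound is really a consequence of the deformation/no-bridge arguments rather than of simplicity.

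For (d), you assert connectedness of $\Dfk\setminus V_\circ$ without argument; it follows because $\Gamma\setminus\P$ is connected and surjects onto $\Dfk\setminus V_\circ$ after contracting pearls. For acyclicity your cycle count is correct in spirit, but the clean statement (used in the paper) is that a cycle in $\Dfk\setminus V_\circ$ lifts to a cycle in $\Gamma\setminus\P$ by choosing, at each pearl it traverses, the arc avoiding the pearl's marked point. The remaining verifications (balancing of $\Dfk$, the length/orientation convention, bidegree, genus) match the paper.
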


\begin{proof}
The proof is similar to the proof of \cite[Proposition 5.10]{blomme2021floor}. We need to show that $\Dfk$ satisfies the constraints from Definition \ref{def-pearl-diagram}.
\begin{enumerate}[label=(\alph*)]
\item Vertices are already split into pearls and flat vertices, which are bivalent. We need to show that each pearl has a unique label.
	\begin{itemize}
	\item Assume a pearl has more than two labels. This means there is a pearl in $\Gamma$ containing two marked points, and thus we have a path between $p_i\neq p_j$ consisting only of non-horizontal edges. By Lemmas \ref{lem-finite-slopes} and \ref{lem-finite-length}, there is only a finite number of possible slopes and the lengths are bounded. As the point configuration $\P$ is sretched, it is not possible to find such a path. Thus, there is at most one label per pearl.
	\item Assume a pearl $\pfk$ has no label. Then, it is possible to deform $\Gamma$ and get a $1$-parameter family of curves passing through $\P$ by changing the lengths of the horizontal edges adjacent to $\pfk$, which amounts to slightly translate the image of the edges inside the pearl. This contradicts the genericity of $\P$ for which we have a finite number of solutions. Thus, a pearl has at least one label.
	\end{itemize}
	
\item We need to show the balancing condition, which is automatically satisfied at the flat vertices. For a pearl $\pfk$, at every vertex $V\in\pfk\subset \Gamma$, we have the balancing condition in $N$. Adding the balancing conditions over all vertices yields the balancing condition for the horizontal edges adjacent to $\pfk$.

\item The orientation on the edges ensures that an edge $e$ going from a vertex with label $i$ to a vertex with label $j$ satisfies $\ell_e\geqslant 0$, and strict inequality if $i>j$.

\item To conclude, we need to show that the complement of flat vertices is connected and without cycle. To prove it, we use the fact that the complement of $\P$ in $\Gamma$ is connected without cycle. Consider a pearl $\pfk\subset\Gamma$.
	\begin{itemize}
	\item As a subgraph of $\Gamma$, $\pfk$ has genus at most $1$: it contains a unique point of $\P$ by (a), and the complement is without cycle since the complement of $\P$ in $\Gamma$ is without cycle.
	\item Furthermore, the subgraph $\pfk$ is homeomorphic to a circle: no edge is disconnecting. To see that, assume there is a disconnecting edge $e\subset\pfk$. Adding the balancing conditions for vertices on one side of $e$, the slope of $e$ needs to be horizontal, which is impossible. Thus, no edge is disconnecting. As the genus of $\pfk$ is at most $1$, it is a circle.
	\end{itemize}
	This proves the last part of the proposition. As the complement of $\P$ in $\Gamma$ is connected, so is the complement of flat vertices in $\Dfk$. If there was a cycle in the complement of flat vertices, there would be one in the complement of $\P$ in $\Gamma$ as well. As the genus of each pearl is $1$, $\Dfk$ has also genus $g$.
\end{enumerate}

To conclude, we need to show that the bidegree of $\Dfk$ is $(d_1,d_2)$.
\begin{itemize}
\item The sum of slopes of edges intersecting $\gamma$ is equal to $\left(\sum_e w_e\ell_e\right)e_1$, but is also by definition equal to $\left(\begin{smallmatrix} d_1 \\ 0 \\ \end{smallmatrix}\right)$.
\item By definition of $d_\pfk$, $\sum_\pfk d_\pfk$ is the vertical coordinate of the sum of slopes of edges intersecting a path in the class $\lambda_1$. As the latter is equal to $\left(\begin{smallmatrix} a \\ d_2 \\ \end{smallmatrix}\right)$, we get $\sum_\pfk d_\pfk=d_2$. 
\end{itemize}
\end{proof}

Notice that when getting a pearl diagram from a tropical curve in the class $\left(\begin{smallmatrix}
d_1 & a \\
0 & d_2 \\
\end{smallmatrix}\right)$, $a$ disappears. In particular, even if a curve is primitive, its pearl diagram may not be primitive.

\subsection{From pearl diagrams back to tropical curves}

We now explain how to recover the tropical curves in a class $\left(\begin{smallmatrix}
d_1 & a \\
0 & d_2 \\
\end{smallmatrix}\right)$ encoded by a pearl diagram $\Dfk$. This is achieved by the main (and most technical) following result.

\begin{prop}\label{proposition curves in diagram point case}
Let $\Dfk$ be a fixed pearl diagram. Let $(k_\pfk)$ be a family such that $k_\pfk|d_\pfk$ for each pearl $\pfk$. Each such choice is called \textit{loop data}. Let
$$\mathrm{gcd}(w_e,k_\pfk)=\mathrm{gcd}(\{w_e\}_{e}\cup\{k_\pfk\}_\pfk).$$
We set $k_v=1$ if $v\in V_\circ(\Dfk)$ is a flat vertex. If $\gcd(w_e,k_\pfk)$ divides $a$, the diagram gives a number of genus $g$ tropical curves in the class $\left(\begin{smallmatrix}
d_1 & a \\
0 & d_2 \\
\end{smallmatrix}\right)$ and passing through the stretched configuration $\P$ equal to
$$\mathrm{gcd}(w_e,k_\pfk)\prod_\pfk k_\pfk^{\val\pfk-1}\prod_{E_\square(\Dfk)}w_e,$$
where $E_\square(\Dfk)$ is the set of edges between two pearls. Moreover, the usual and refined multiplicities $m_\Gamma$ and $m_\Gamma^\refined$ of these curves are all the same and their respective values are
$$m_\Gamma=\prod_\pfk \left(\frac{d_\pfk}{k_\pfk}\right)^{\val\pfk}\prod_{e} w_e^{|\{\pfk\in e\}|}
\text{ and }
m_\Gamma^\refined=\prod_\pfk\prod_{e\ni\pfk}\left[ \frac{d_\pfk}{k_\pfk}w_e \right]_-,$$
where we have set $[\alpha]_-=q^{\alpha/2}-q^{-\alpha/2}$. If $\mathrm{gcd}(w_e,k_\pfk)$ does not divide $a$, the number of curves is $0$.
\end{prop}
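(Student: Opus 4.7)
The plan is to treat this as a combinatorial reconstruction: given $\Dfk$ and the loop data $(k_\pfk)$, recover the vertical data missing from the pearl diagram, namely the vertical positions of horizontal edges and the precise shape of each pearl in the narrow vertical direction of the stretched torus $\TT A$. We then count the reconstructions and compute their multiplicities.

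First, I would interpret the loop data. By Proposition \ref{proposition p marked diagram} each pearl $\pfk \subset \Gamma$ is a circle, and since $\TT A$ is stretched ($L \gg l \gg \varepsilon$), its image lies in a thin vertical strip, so the class realized by the pearl loop in $H_1(\TT A, \ZZ)$ is vertical. At each trivalent vertex inside $\pfk$ the balancing condition pins the vertical components of the two pearl edges, and by Lemma \ref{lem-finite-slopes} only a finite number of slopes are possible; combining these, one verifies that the gcd of the weights of the edges of the pearl loop is exactly $k_\pfk$ and that the loop wraps $d_\pfk/k_\pfk$ times around the vertical direction. With $k_\pfk$ fixed, the shape of $\pfk$ is rigid up to vertical translation.

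Next, I would count placements. For each edge $e \in E_\square(\Dfk)$ connecting two pearls, a slope-$(w_e,0)$ edge covers the small vertical circle $w_e$-to-one, contributing a factor $w_e$ to the number of lifts. For each pearl $\pfk$, a vertical translation by $l/k_\pfk$ is a symmetry of its image, so relative to its attachment points the pearl has $k_\pfk^{\val\pfk-1}$ translational choices (one $k_\pfk$ per attachment, minus one absorbed by a global translation). Finally, going once around the horizontal circle must pick up the vertical shift $a$, which comes from the off-diagonal entry $d_1\varepsilon$ of $S$ paired with the $a$-entry of the class matrix. This closing-up condition becomes a single linear congruence in $\ZZ/l\ZZ$ whose left-hand side ranges over $\gcd(w_e,k_\pfk)\,\ZZ$ and whose right-hand side is prescribed by $a$. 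Thus the system is solvable precisely when $\gcd(w_e,k_\pfk) \mid a$, and when it is, the overall vertical position provides exactly $\gcd(w_e, k_\pfk)$ further solutions, producing the announced count.

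Finally, the multiplicities are immediate once the curves are identified: every trivalent vertex lies inside some pearl and is adjacent to two pearl edges and one horizontal edge $e$. The vertical component of the primitive slope of each pearl edge at such a vertex is $d_\pfk/k_\pfk$, and the horizontal edge has slope $(w_e,0)$, so $m_V = (d_\pfk/k_\pfk)\cdot w_e$. Multiplying over the $\val\pfk$ vertices of each pearl yields the stated formula for $m_\Gamma$ (an edge in $E_\square(\Dfk)$ borders two pearls and hence contributes $w_e^2$) and for $m_\Gamma^\refined$ (replacing $m_V$ by $\left[ m_V \right]_-$). The main obstacle will be Steps 2 and 3: carefully disentangling the local degrees of freedom per edge and per pearl from the one global degree of freedom, and tracking how they combine to yield precisely $\gcd(w_e,k_\pfk)\prod_\pfk k_\pfk^{\val\pfk-1}\prod_{E_\square(\Dfk)}w_e$ together with the congruence $\gcd(w_e,k_\pfk)\mid a$. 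The stretching of both $\TT A$ and $\P$, combined with Lemmas \ref{lem-finite-slopes} and \ref{lem-finite-length}, is essential to guarantee that this finite description is exhaustive.
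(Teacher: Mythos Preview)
Your outline follows the paper's own ``idea of proof'' sketch quite closely: reconstruct each pearl as a cycle of a given wrapping number, count lift choices for horizontal edges and pearls, and impose one global closing-up congruence. However, the paper explicitly flags that this intuitive pruning picture does not close cleanly, and replaces it by a formal computation with a torus map $\Phi_{\RR/\ZZ}$ whose domain is indexed by flags of $\Dfk$, whose codomain encodes the edge-gluing, point-evaluation, and Menelaus constraints, and whose degree is extracted as the gcd of the cofactors of the underlying lattice map. Your proposal has two concrete gaps relative to this.

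First, the factorization of the count into $\prod_{E_\square}w_e$, $\prod_\pfk k_\pfk^{\val\pfk-1}$, and a single residual congruence is asserted, not derived. The constraints are not a priori independent: the Menelaus relation at a pearl couples all adjacent flags, and the gluing conditions couple the two flags on each edge. The paper's resolution is to compute the lattice index directly (its Step~C), pruning leaves and lonely flags to evaluate every cofactor and then take their gcd; this is where $\gcd(w_e,k_\pfk)$ genuinely emerges. Your ``single linear congruence in $\ZZ/l\ZZ$'' is the analogue of the paper's map $\widehat{\psi}_{\RR/\ZZ}$, but you would still need to show that $\mathrm{Im}\,\Phi_{\RR/\ZZ}=\ker\widehat{\psi}_{\RR/\ZZ}$ (not merely $\subset$), which in the paper requires the injectivity established by the cofactor computation.

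Second, two details are garbled. In your interpretation of the loop data you have the roles of $k_\pfk$ and $d_\pfk/k_\pfk$ reversed: the pearl wraps $k_\pfk$ times (it is a degree-$k_\pfk$ cover of the vertical circle), so the vertical weight on each pearl edge is $d_\pfk/k_\pfk$; your Step~3 then uses the correct value $d_\pfk/k_\pfk$ for $m_V$, inconsistent with your Step~1. Also, a horizontal edge of slope $(w_e,0)$ does not ``cover the small vertical circle $w_e$-to-one''; the factor $w_e$ for $e\in E_\square(\Dfk)$ arises instead because the Menelaus relation at a pearl determines the vertical position of the last adjacent edge only modulo $w_e$-torsion. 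Fixing these and carrying out the lattice-index computation carefully would bring your argument in line with the paper's.
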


The loop data $k_\pfk$ corresponds to the number of turns that the pearl $\pfk$ makes around the vertical direction. It divides $\delta_\Dfk=\mathrm{gcd}(w_e,d_\pfk)$ but may not be equal to it, for instance if some $k_\pfk=1$.

The pearl diagram approach does not allow to easily recover the gcd of the tropical curves encoded in the diagram. Some of the curves might be primitive, but maybe not all of them. This a priori justifies the use of the usual refined multiplicity $m_\Gamma^\refined$ since we do not have a grasp on the gcd of the curves. The count of solutions using the complex multiplicity $\delta_\Gamma m_\Gamma$ or the second refined multiplicity $M_\Gamma$ can be recovered by making a disjunction over the value of gcd. It is the content of the next section.

\begin{proof}[Idea of proof]
To recover tropical curves from a pearl diagram $\Dfk$, one proceeds as follows.
\begin{itemize}[label=$\circ$]
\item First, following Proposition \ref{proposition p marked diagram}, the subgraph associated to a pearl is a circle mapped around the vertical direction of $\TT A$, realizing some class $k_\pfk\lambda_2$, where $k_\pfk|d_\pfk$. It can thus be lifted under a degree $k_\pfk$ cover that makes only one turn around the vertical direction.
\item The complement of the flat vertices inside $\Dfk$ is a tree. The position of each flat vertex is fixed by a point constraint. We then prune this tree to reconstruct the curve inductively. As a neighbourhood of a pearl $\pfk$ in the abelian surface is exactly an open set in a cylinder, this step is analog to what happens in \cite{blomme2021floor}, where we can use the \textit{Menelaus relation} \cite[Theorem 3.18]{blomme2021floor}. The latter is a relation between the positions of the ends of a tropical curve on a cylinder, which here become the horizontal edges adjacent to $\pfk$.
\item We now explain the pruning: for a pearl $\pfk$ all of whose adjacent edges but one are fixed, the pearl is recovered as follows. One chooses lifts of the edges under the covering map of degree $k_\pfk$ that makes the pearl do only one turn around the vertical direction, as in \cite{blomme2021floor}. Using the \textit{Menelaus relation} between the edges, this determines the position of the remaining edge $e$ up to $w_e$-torsion. Accounting for deck transformations and lift of the marked point on the pearl, there are $k_\pfk^{\val\pfk-1}$ lifting possibilities. The $\prod_{E_\square(\Dfk)}w_e$ is for these torsion choices.
\item Depending on how we prune the branches, we end up at one of the following possibilities:
	\begin{itemize}[label=$\blacktriangleright$]
		\item Either we have an edge both whose adjacent pearls have been pruned. The position of the edge has thus been determined in two different ways that have to agree. The fact that $C$ is realizable ensures that the positions agree, but only up to torsion. Hence, it might not be possible to glue.
		\item Or, we have to find a pearl all of whose adjacent edges are fixed. The fact that the class $C$ is realizable ensures that it is possible to find a pearl completing the curve, ensuring that the Menelaus condition is satisfied mod $l$. However, when choosing lifts under the degree $k_\pfk$-cover, the Menelaus relation has to be satisfied mod $k_\pfk l$, so the lifts might not be chosen independently, and it might not be possible to choose them at all.
	\end{itemize}
\end{itemize}

The last steps of the algorithm suggest that all the lifts should be chosen simultaneously. Such an information is contained in a map between real tori that we consider at the beginning of the proof.
\end{proof}

\begin{proof}
\textbf{Step A: formulation of the problem.} For an oriented edge $e$, let $e_+$ and $e_-$ be the vertices it contains, and for a flat vertex $v\in V_\circ(\Dfk)$, let $e_v^\pm$ be the edges adjacent to it. Recovering the tropical curves is done using the following map:
$$\Phi_{\RR/\ZZ}:\left|\begin{array}{>{\displaystyle}cc>{\displaystyle}l}
 \bigoplus_{E(\Dfk)} \RR/k_{e_-}l\ZZ\oplus\RR/k_{e_+}l\ZZ & \longrightarrow & \bigoplus_{E(\Dfk)} \RR/l\ZZ \oplus \bigoplus_{V_\circ(\Dfk)} (\RR/l\ZZ)^2 \oplus \bigoplus_{V_\square(\Dfk)} \RR/ k_\pfk l\ZZ \\
 (x_e^-,x_e^+)_e & \longmapsto & \left( (x_e^+-x_e^-), (x_{e_v^-}^-,x_{e_v^+}^+)_v , \left( \sum_{e\ni\pfk} \pm w_ex_e^\pm \right)_\pfk  \right) \\
\end{array} \right. .$$
The domain has rank equal to the number of \textit{flags}, which are pairs of $(v,e)\in V(\Dfk)\times E(\Dfk)$ with $v\in e$. It measures the vertical coordinate of the horizontal edge of the flag in the degree $k_\pfk$ cover of $\RR/l\ZZ$. The codomain is comprised of three terms:
\begin{itemize}
\item The first term is indexed by edges and is the difference between vertical position of its flags in $\RR/l\ZZ$. For the gluing to be possible, we need:
$$x_e^+-x_e^-=\ell_e\cdot d_2\varepsilon.$$
If $\ell_e=0$, this amounts to say that the vertical coordinate agree. If $\ell_e>0$, there is a monodromy of $d_2\varepsilon$ in the vertical coordinate when going around the loop $\lambda_1$, yielding the $\ell_ed_2\varepsilon$.
\item The second term is the vertical position of the flags adjacent to a flat vertex. Both are fixed by the point constraints, so that we need to impose $x_{e_v^-}^-=x_{e_v^+}^+=y_v$.
\item The last term is the Menelaus condition at the pearl $\pfk$: there is a unique local tropical curve doing $k_\pfk$ turns around the direction $\lambda_2$ with the chosen vertical positions of edges in $\RR/k_\pfk l\ZZ$ if and only if we have
$$\sum_{e\ni\pfk} \pm w_e x_e^\pm \equiv d_\pfk\left(\frac{a}{d_2}l+d_1\varepsilon\right) \text{ modulo } k_\pfk l .$$
\end{itemize}

The coordinate corresponding to a flag $v\in e$ is involved for the summands of the codomain corresponding to $v$ and $e$, where the terms of $V_\circ(\Dfk)$ have been doubled for each of its flags.

The number of tropical curves encoded in the diagram is equal to the number of preimages of a point $\left((\ell_e d_2\varepsilon)_e,(y_v,y_v)_v,(d_\pfk \left(\frac{a}{d_2}l+d_1\varepsilon\right))_\pfk\right)$, where $(y_v)_v$ are chosen generically. To do this, we have to determine if this point lies in the image of $\Phi_{\RR/\ZZ}$, and what is the number of preimages of a point in the image.

\textbf{Step B: Image of the map $\Phi_{\RR/\ZZ}$ between tori.} The map $\Phi_{\RR/\ZZ}$ can be written $\Phi\otimes\RR/\ZZ$ where $\Phi$ is the following lattice map
$$\Phi:\left| \begin{array}{>{\displaystyle}cc>{\displaystyle}l}
(\ZZ\oplus\ZZ)^{E(\Dfk)} & \longrightarrow & \ZZ^{E(\Dfk)} \oplus \ZZ^{2V_\circ(\Dfk)} \oplus \ZZ^{V_\square(\Dfk)} \\
 (x_e^-,x_e^+)_e & \longmapsto & \left( (k_{e_+}x_e^+ - k_{e_-}x_e^-)_e, (x^-_{e_v^-},x^+_{e_v^+})_v , \left( \sum_{e\ni\pfk} \pm w_ex_e^\pm \right)_\pfk  \right) \\
\end{array} \right. .$$
In fact, $\Phi_{\RR/\ZZ}$ is a map between two real tori, and $\Phi=H_1(\Phi_{\RR/\ZZ})$ is the map between their first homology groups. The domain has rank $2|E(\Dfk)|$ while the codomain has rank $|E(\Dfk)|+2|V_\circ(\Dfk)|+|V_\square(\Dfk)| $. The difference between ranks of codomain and domain is
$$(|V(\Dfk)|-|E(\Dfk)|)+|V_\circ(\Dfk)|=1.$$
It means that this map cannot be surjective. There is indeed a relation between the first and third coordinates of the image. Let $\left( (y_e)_e,(y_v,y'_v)_v,(\mu_\pfk)_\pfk\right)$ belong to the codomain of $\Phi_{\RR/\ZZ}$, and set
$$\psi_{\RR/\ZZ}\left( (y_e)_e,(y_v,y'_v)_v,(\mu_\pfk)_\pfk\right)= \sum_\pfk \mu_\pfk -\sum_e w_ey_e \in\RR/l\ZZ ,$$
where $\mu_\pfk\in\RR/k_\pfk l\ZZ$ is reduced in $\RR/l\ZZ$ before making the sum. We have that $\mathrm{Im}\ \Phi_{\RR/\ZZ}\subset\ker\psi_{\RR/\ZZ}$. Geometrically, it means the following: interpreting $y_e$ as flows on the graph, the sum of the flows on the edges is equal to the sum of the divergence at the pearls.

The torus map $\psi_{\RR/\ZZ}$ may be expressed as $\psi\otimes\RR/\ZZ$ where $\psi=H_1(\psi_{\RR/\ZZ})$ is the following lattice map:
$$\psi\left( (y_e)_e,(y_v,y'_v)_v,(\mu_\pfk)_\pfk\right)=\sum_\pfk k_\pfk\mu_\pfk - \sum_e w_e y_e.$$
This way, we see that $\psi$ may not be primitive since $w_e$ and $k_\pfk$ may not be coprime. If $\delta=\gcd(w_e,k_\pfk)$, $\ker\psi_{\RR/\ZZ}$ has in fact $\delta$ components. As $\mathrm{Im}\ \Phi_{\RR/\ZZ}$ is connected, we rather consider $\widehat{\psi}=\frac{1}{\delta}\psi$, and $\widehat{\psi}_{\RR/\ZZ}=\widehat{\psi}\otimes\RR/\ZZ$, whose kernel is now connected:
$$\widehat{\psi}_{\RR/\ZZ}\left( (y_e)_e,(y_v,y'_v)_v,(\mu_\pfk)_\pfk\right)=\sum_\pfk \frac{1}{\delta}\mu_\pfk-\sum_e \frac{w_e}{\delta}y_e,$$
where $\frac{1}{\delta}\mu_\pfk$ is well-defined in $\RR/(k_\pfk/\delta)l\ZZ$. We still have $\mathrm{Im}\ \Phi_{\RR/\ZZ}\subset\ker\widehat{\psi}_{\RR/\ZZ}$. Thus, assuming $\Phi$ is injective (which follows from the computation of index in Step C), both have the same dimension and we have equality. To determine if $\left((\ell_e d_2\varepsilon)_e,(y_v,y'_v)_v,\left(d_\pfk \left(\frac{a}{d_2}l+d_1\varepsilon\right)\right)_\pfk\right)$ belongs to $\mathrm{Im}\ \Phi_{\RR/\ZZ}$, we just have to check that its image under $\widehat{\psi}_{\RR/\ZZ}$ is $0$:
\begin{align*}
\widehat{\psi}_{\RR/\ZZ}\left((\ell_e d_2\varepsilon)_e,(y_v,y'_v)_v,\left(d_\pfk \left(\frac{a}{d_2}l+d_1\varepsilon\right)\right)_\pfk\right) \equiv & \sum_\pfk \frac{1}{\delta}d_\pfk\left(\frac{a}{d_2}l+d_1\varepsilon\right) - \sum_e \frac{w_e}{\delta}d_e d_2 \varepsilon  \text{ mod }l \  \\
 \equiv & \frac{1}{\delta}d_2\left(\frac{a}{d_2}l+d_1\varepsilon\right) -\frac{d_1d_2\varepsilon}{\delta}  \text{ mod }l\\
 \equiv & \frac{a}{\delta}l \text{ mod }l.\\
\end{align*}
Thus, there are curves in the class $\left(\begin{smallmatrix}
d_1 & a \\
0 & d_2 \\
\end{smallmatrix}\right)$ if and only if $\mathrm{gcd}(w_e,k_\pfk)|a$.

\textbf{Step C: degree of $\Phi_{\RR/\ZZ}$.} Let $n=2|E(\Dfk)|$ so that $n+1=|E(\Dfk)|+2|V_\circ(\Dfk)|+|V_\square(\Dfk)|$. The map $\Phi_{\RR/\ZZ}$ is a group morphism between real tori coming from the (soon to be injective) lattice map
$$\Phi:\ZZ^n\to \ZZ^{n+1}.$$
Therefore, it is a covering map over its image. Choosing bases where it is in diagonal form, we see that the number of preimages of an element (or degree of the covering) is equal to the lattice index of $\mathrm{Im}\ \Phi$ inside the primitive lattice that contains it. This index can also be computed as the integral length of the generator of the image of
$$\Lambda^n\Phi:\Lambda^n\ZZ^n\simeq\ZZ\to\Lambda^n\ZZ^{n+1}\simeq\ZZ^{n+1}.$$
The coefficients of this linear map are the cofactors of $\Phi$, so that we take their gcd, which we now compute. If this gcd is non-zero, one cofactor is non-zero, proving the injectivity of $\Phi$.

We start by formally splitting the flat vertices into two, since they have two adjacent flags. The complement of the flat vertices is a tree. The minors are obtained by deleting a row, thus corresponding to an edge, flag of a flat vertex, or a pearl. However, minors corresponding to a flag of a flat vertex are $0$ since we still have the relation between rows given by $\psi$. Thus, choose an edge or a pearl and the associated minor. We now explain the computation of the minors, as outlined before the proof. We compute the minors inductively with the two following steps. Each pruning step corresponds to an determinant expansion with respect to a row.
\begin{enumerate}[label=(\arabic*)]
\item \textbf{Pruning of leafs:} If a flag is the only one adjacent to a given vertex, the coefficient in the row corresponding to the vertex is the only non-zero coefficient of the row, since there is a unique flag. We say that we have a \textit{leaf}. Thus, we may expand with respect to the row, and delete the corresponding row/column. This coefficient is $1$ for the flags adjacent to flat vertices, and $w_e$ for flags adjacent to a pearl. On the tree, we delete the flag.

\item \textbf{Pruning of lonely flags:} After iterations of the first step, there are flags $(\pfk,e)$ for which the other flag containing $e$ has been deleted, called \textit{lonely flag}. For such a flag, the only non-zero element in the row corresponding to the edge $e$ is the one coming from $(\pfk,e)$, since the other has been deleted. We can thus expand with respect to the row, whose coefficient is $k_\pfk$, and delete the flag from the graph.
\end{enumerate}
We alternate the pruning of leafs and lonely flags. The process ends as follows:
\begin{itemize}
\item For the minor associated to a pearl $\pfk_0$, we end by pruning the adjacent flags, that at that point have become lonely flags, and get
$$k_{\pfk_0}^{\val\pfk_0}\prod_{\pfk\neq\pfk_0} k_\pfk^{\val\pfk-1}\prod_{e\in E_\square(\Dfk)}w_e.$$
\item For the minor associated to an edge $e_0\in E_\square(\Dfk)$, we end by pruning both adjacent flags, that at that point have become leafs, yielding
$$w_{e_0}^2\prod_{\pfk} k_\pfk^{\val\pfk-1}\prod_{\substack{e\in E_\square(\Dfk) \\ e\neq e_0}}w_e.$$
\item For the minor associated to an edge $e_0$ adjacent to a flat vertex, we end by pruning the flag adjacent to a pearl, that has become a leaf at that point. We get
$$w_{e_0}\prod_{\pfk} k_\pfk^{\val\pfk-1}\prod_{e\in E_\square(\Dfk)}w_e.$$
\end{itemize}
Taking the gcd, we get
$$\mathrm{gcd}(w_e,k_\pfk)\prod_\pfk k_\pfk^{\val\pfk-1}\prod_{E_\square(\Dfk)} w_e.$$
This number is non-zero, so that $\Phi$ is indeed injective. To get the number of tropical curves, one has to divide by the deck transformations at each pearl since translation by a torsion element yields the same tropical curve, but we also have to account for the position of the marked point located on each pearl, that can have $k_\pfk$ different values. Both cancel each other. Therefore, the above formula gives the right result.

\textbf{Step D: multiplicities of the tropical curves.} To finish, we deal with the multiplicities of the tropical curve encoded by the diagram. Vertices of tropical curve are in bijection with flags $(\pfk,e)$ of the diagram, with $\pfk\in V_\square(\Dfk)$ a pearl. If the pearl is a cover of degree $k_\pfk$, the vertex has multiplicity $m_V=\frac{d_\pfk}{k_\pfk}w_e$. The multiplicity $m_\Gamma$ and $m_\Gamma^\refined$ follow.
\end{proof}

\begin{rem}
For readers of \cite{blomme2022abelian1} or \cite{blomme2022abelian2}, the computation of the number of tropical curves encoded by a diagram may look like the computation of the complex multiplicity of a tropical curve inside an abelian surface: in both cases we take the gcd of the minors of a matrix.

More precisely, the gcd is just a practical method to compute a lattice index: in each situation, we have a lattice map given by a matrix $\phi:\ZZ^n\to\ZZ^{n+1}$, with natural bases indexed by edges or vertices, and the image is known to lie in a hyperplane with some explicit equation. Such a lattice map appears because in both situation we are trying to put \textit{phases} (\textit{i.e.} elements of a group $G$ with many torsion elements: $\RR/\ZZ$, $\CC^*$ or $S^1\subset\CC^*$) on the edges of a graph, with some conditions: evaluation of some phases, or balancing condition such as the Menelaus relation. In any case, we wish to determine the number of preimages of a given point, \textit{i.e.} the kernel of the group morphism $\phi\otimes G$, which in our case is the lattice index of $\mathrm{Im}\phi$ inside the primitive lattice it spans.
	\begin{itemize}[label=$\triangleright$]
	\item In the tropical curve setting, phases are complex numbers that encode the position of the nodes dual to the edges of the tropical curve.
	\item In the case of pearl diagrams, phases correspond to the second coordinates inside the tropical abelian surface.
	\end{itemize}
\end{rem}

\subsection{Recovering the invariants}
\label{sec-diagram-mult}

\subsubsection{First diagram multiplicities.} For a diagram $\Dfk$, let $m_a(\Dfk)$ and $M_a(\Dfk)$ be the counts of curves in the class $\left(\begin{smallmatrix}
d_1 & a \\
0 & d_2 \\
\end{smallmatrix}\right)$ encoded in $\Dfk$ with multiplicities $m_\Gamma$ and $m_\Gamma^\refined$ respectively. To simplify notations, we set $\mathds{1}_\square(e)$ to be $1$ if $e\in E_\square(\Dfk)$ and $0$ else, and consider the product of the number of curves for a fixed loop data with their (refined) multiplicity, up to the gcd term:
\begin{align*}
\xi_\Dfk(k_\pfk)= & \prod_\pfk d_\pfk^{\val\pfk-1}\frac{d_\pfk}{k_\pfk}\prod_e w_e^{|\{\pfk\in e\}|+\mathds{1}_\square(e)}, \\
\Xi_\Dfk(k_\pfk)= & \prod_\pfk k_\pfk^{\val\pfk-1}\left(\prod_{e\ni\pfk} \left[\frac{d_\pfk}{k_\pfk}w_e\right]_-\right)\prod_{E_\square(\Dfk)} w_e, \\
\end{align*}
which are the product of the number of tropical curves encoded by a diagram for a given loop data, and their common (refined) multiplicity. The term $|\{\pfk\in e\}|=1+\mathds{1}_\square(e)$ is the number of pearls adjacent to $e$. Thus, in $\xi_\Dfk(k_\pfk)$, the exponent is either $1$ or $3$ depending on whether $e$ is adjacent to a flat vertex or not. Following Proposition \ref{proposition curves in diagram point case}, we have the following. 

\begin{coro}
One has
$$m_a(\Dfk) = \sum_{\substack{k_\pfk|d_\pfk \\ \mathrm{gcd}(w_e,k_\pfk)|a }}\mathrm{gcd}(k_\pfk,w_e)\xi_\Dfk(k_\pfk), \text{ and }
M_a(\Dfk)   = \sum_{\substack{k_\pfk|d_\pfk \\ \mathrm{gcd}(w_e,k_\pfk)|a }} \mathrm{gcd}(k_\pfk,w_e)\Xi_\Dfk(k_\pfk).$$
In particular, for $a=0$, the sum is over $k_\pfk|d_\pfk$ without gcd condition, and for $a=1$, the condition is for the gcd to be $1$.
\end{coro}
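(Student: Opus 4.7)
The plan is to derive both identities as a direct bookkeeping consequence of Proposition \ref{proposition curves in diagram point case}. The key observation is that every tropical curve $h:\Gamma\to\TT A$ encoded by the diagram $\Dfk$ gives rise to canonical \textit{loop data}: each pearl $\pfk\subset\Gamma$ is (by Proposition \ref{proposition p marked diagram}) a circle whose image realises a class $k_\pfk\lambda_2$ with $k_\pfk\mid d_\pfk$, and this assignment $(k_\pfk)_\pfk$ partitions the set of curves encoded by $\Dfk$. So I would first write
\[
m_a(\Dfk)=\sum_{(k_\pfk)}N(k_\pfk)\cdot m_\Gamma(k_\pfk),\qquad M_a(\Dfk)=\sum_{(k_\pfk)}N(k_\pfk)\cdot m_\Gamma^\refined(k_\pfk),
\]
where the sum is over loop data $k_\pfk\mid d_\pfk$, and $N(k_\pfk)$ is the number of curves with that loop data (all sharing common $m_\Gamma$ and $m_\Gamma^\refined$ by the proposition).

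Next I invoke Proposition \ref{proposition curves in diagram point case}: whenever $\gcd(w_e,k_\pfk)\mid a$, one has
\[
N(k_\pfk)=\gcd(w_e,k_\pfk)\prod_\pfk k_\pfk^{\val\pfk-1}\prod_{E_\square(\Dfk)}w_e,
\]
and otherwise $N(k_\pfk)=0$; the summation is therefore restricted to loop data satisfying the divisibility condition $\gcd(w_e,k_\pfk)\mid a$, which matches the range indicated in the corollary. It remains to carry out the algebraic simplification.

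For the refined count, multiplying $N(k_\pfk)$ by $m_\Gamma^\refined=\prod_\pfk\prod_{e\ni\pfk}[\tfrac{d_\pfk}{k_\pfk}w_e]_-$ immediately produces $\gcd(w_e,k_\pfk)\,\Xi_\Dfk(k_\pfk)$ by construction of $\Xi_\Dfk$. For the unrefined count, I collect the pearl factors
\[
k_\pfk^{\val\pfk-1}\cdot\left(\tfrac{d_\pfk}{k_\pfk}\right)^{\val\pfk}=\frac{d_\pfk^{\val\pfk}}{k_\pfk}=d_\pfk^{\val\pfk-1}\cdot\frac{d_\pfk}{k_\pfk},
\]
and the edge factors $\prod_{E_\square}w_e\cdot\prod_e w_e^{|\{\pfk\in e\}|}=\prod_e w_e^{|\{\pfk\in e\}|+\mathds{1}_\square(e)}$, recognising $\xi_\Dfk(k_\pfk)$. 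Summing over admissible loop data yields the desired formulas. There is no real obstacle here: once Proposition \ref{proposition curves in diagram point case} is in hand, the corollary is essentially a matter of grouping exponents correctly, the only subtlety being to keep track of the factor $\mathds{1}_\square(e)$ which records whether an edge contributes $w_e$ from the count of curves or only from the vertex multiplicities.
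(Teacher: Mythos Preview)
Your proposal is correct and follows essentially the same approach as the paper: partition the curves encoded by $\Dfk$ according to their loop data, invoke Proposition \ref{proposition curves in diagram point case} for both the count and the common multiplicity, and observe that the product is precisely $\gcd(w_e,k_\pfk)\,\xi_\Dfk(k_\pfk)$ (resp.\ $\gcd(w_e,k_\pfk)\,\Xi_\Dfk(k_\pfk)$). The paper's own proof states this in two sentences without spelling out the algebraic regrouping of the pearl and edge factors that you carry out explicitly, so your version is simply a more detailed rendering of the same argument.
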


\begin{proof}
The formula for $m_a$ (resp. $M_a$) is obtained by making the sum over all loop data $k_\pfk|d_\pfk$ that contribute solutions of the product between the common multiplicity $m_\Gamma$ (resp. $m_\Gamma^\refined$) of the tropical curves and their number. Following Proposition \ref{proposition curves in diagram point case}, one needs $\mathrm{gcd}(w_e,k_\pfk)|a$ for the loop data to contribute.
\end{proof}

\subsubsection{Auxiliary multiplicities.} The presence of a gcd makes the computation of the sums in the above proposition quite difficult. To help us give compact expressions of these multiplicities, we introduce the following auxiliary multiplicities where we fix the value of $\mathrm{gcd}(w_e,k_\pfk)$. These expressions do not have an enumerative interpretation in terms of counts of tropical curves encoded in the diagram $\Dfk$. Only $\omega_1=m_1$ and $\Omega_1=M_1$ correspond to the counts of curve in the class $\left(\begin{smallmatrix}
d_1 & 1 \\
0 & d_2 \\
\end{smallmatrix}\right)$:
\begin{align*}
\omega(\Dfk) & =\sum_{k_\pfk|d_\pfk}\xi_\Dfk(k_\pfk), &
\Omega(\Dfk) & =\sum_{k_\pfk|d_\pfk}\Xi_\Dfk(k_\pfk), \\
\omega_k(\Dfk) & =\sum_{\substack{k_\pfk|d_\pfk \\ \mathrm{gcd}(w_e,k_\pfk)=k}}\xi_\Dfk(k_\pfk), &
\Omega_k(\Dfk) & =\sum_{\substack{k_\pfk|d_\pfk \\ \mathrm{gcd}(w_e,k_\pfk)=k}}\Xi_\Dfk(k_\pfk).\\
\end{align*}

The following proposition relates these various multiplicities through the arithmetic convolution.

\begin{prop}
We have the following expression:
$$
\omega(\Dfk)  =\prod_\pfk d_\pfk^{\val\pfk-1}\sigma_1(d_\pfk)\prod_e w_e^{|\{\pfk\in e\}|+\mathds{1}_\square(e)} ,\ \Omega(\Dfk)  =\prod_\pfk\left(\sum_{k|d_\pfk}k^{\val\pfk-1}\prod_{e\ni\pfk}\left[\frac{d_\pfk}{k}w_e\right]_-\right)\prod_{E_\square(\Dfk)}w_e\ ,$$
and identities:
\begin{align*}
\omega_k(\Dfk) & = k^{4g-5}\omega_1(\Dfk/k) ,& \Omega_k(\Dfk) & = k^{2g-3}\Omega_1(\Dfk/k)(q^k) ,\\
\omega & =\epsilon_{4g-5}\ast\omega_1 , & \Omega& = \epsilon_{2g-3}\ast\Omega_1 ,\\
m_0 & =\epsilon_{4g-4}\ast\omega_1 , & M_0 & = \epsilon_{2g-2}\ast\Omega_1 ,\\
\end{align*}
where the last two rows are equalities between functions on the set of diagrams, and the convolution is made over the divisors of a diagram, as explained in Section \ref{section convolution}.
\end{prop}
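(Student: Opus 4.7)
The closed forms for $\omega(\Dfk)$ and $\Omega(\Dfk)$ will follow by factorising the sums over pearls. In $\xi_\Dfk(k_\pfk)$ the loop data dependence is concentrated in $\prod_\pfk(d_\pfk/k_\pfk)$, so the sum over $(k_\pfk)$ factorises and reduces to $\prod_\pfk\sigma_1(d_\pfk)$, while the prefactor $\prod_\pfk d_\pfk^{\val\pfk-1}\prod_e w_e^{|\{\pfk\in e\}|+\mathds{1}_\square(e)}$ is inert under the sum. For $\Xi_\Dfk$ the same principle applies: the factor $\prod_{E_\square}w_e$ comes out, and the remaining expression factorises pearl-by-pearl, so the sum over $(k_\pfk)$ splits into a product of independent per-pearl sums, yielding the claimed formula.

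Next I would establish the homogeneity identities $\omega_k(\Dfk)=k^{4g-5}\omega_1(\Dfk/k)$ and $\Omega_k(\Dfk)=k^{2g-3}\Omega_1(\Dfk/k)(q^k)$. Writing $k_\pfk=kk'_\pfk$ puts loop data on $\Dfk$ with $\gcd(w_e,k_\pfk)=k$ in bijection with primitive loop data on $\Dfk/k$. A direct substitution gives $\xi_\Dfk(kk')=k^{N_\xi}\xi_{\Dfk/k}(k')$ with
$$N_\xi=\sum_\pfk(\val\pfk-1)+\sum_e\bigl(|\{\pfk\in e\}|+\mathds{1}_\square(e)\bigr),$$
and the double count $\sum_\pfk\val\pfk=\sum_e|\{\pfk\in e\}|=2|E_\square|+|E_{\square\circ}|$ rewrites this as $5|E_\square|+2|E_{\square\circ}|-|V_\square|$. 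The decisive structural input is clause (d) of Definition \ref{def-pearl-diagram}: the vertex-induced subgraph on pearls is a tree, so $|E_\square|=|V_\square|-1$. Combined with bivalence of flat vertices and the Euler relation $|E|=2g-1-|V_\square|$ coming from $b_1(\Dfk)+|V_\square|=g$, this yields $|E_{\square\circ}|=2g-2|V_\square|$, and substituting collapses $N_\xi$ to the universal value $4g-5$. The same manipulation for $\Xi_\Dfk$ gives $N_\Xi=3|E_\square|+|E_{\square\circ}|-|V_\square|=2g-3$, and since $[k\alpha]_-(q)=[\alpha]_-(q^k)$, the evaluation at $q^k$ appears automatically.

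The convolution statements will then follow by partitioning loop data according to the value of the gcd. Since $\gcd(w_e,k_\pfk)=k$ forces $k\mid\delta_\Dfk$, one has $\omega(\Dfk)=\sum_{k\mid\delta_\Dfk}\omega_k(\Dfk)=\sum_{k\mid\Dfk}k^{4g-5}\omega_1(\Dfk/k)=(\epsilon_{4g-5}\ast\omega_1)(\Dfk)$, and analogously $\Omega=\epsilon_{2g-3}\ast\Omega_1$ under the deformed convolution. For $m_0$ and $M_0$, the choice $a=0$ trivialises the constraint $\gcd(w_e,k_\pfk)\mid a$, so the sum runs over all loop data; each summand additionally carries the weight $\gcd(w_e,k_\pfk)=k$, contributing one extra power of $k$ and giving $m_0=\epsilon_{4g-4}\ast\omega_1$ and $M_0=\epsilon_{2g-2}\ast\Omega_1$.

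The main obstacle is the combinatorial collapse of the exponents $N_\xi$ and $N_\Xi$: a priori both depend on several distinct incidence counts of $\Dfk$, and it is the tree structure of $(V_\square,E_\square)$ imposed by clause (d) of the pearl diagram definition—a genuinely structural constraint rather than a routine manipulation—that makes everything reduce to a universal function of the genus $g$.
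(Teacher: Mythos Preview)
Your proposal is correct and follows essentially the same route as the paper's proof: factorise $\omega,\Omega$ over pearls, obtain the homogeneity relations by the substitution $k_\pfk=k\widehat{k}_\pfk$, $w_e=k\widehat{w}_e$, $d_\pfk=k\widehat{d}_\pfk$, reduce the resulting exponents using the combinatorial identities on $\Dfk$, and then stratify by the value of $\gcd(w_e,k_\pfk)$ to get the convolution formulas. The only cosmetic difference is that you introduce $|E_{\square\circ}|$ and phrase clause~(d) as ``$(V_\square,E_\square)$ is a tree'', whereas the paper works directly with the Euler-characteristic relations $\sum_\pfk\val\pfk=2g-2$ and $|E_\square|-|V_\square|=-1$; these are equivalent formulations of the same computation (your tree statement being the retraction of $\Dfk\setminus V_\circ$ onto $(V_\square,E_\square)$, which also implicitly uses that no edge joins two flat vertices).
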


\begin{proof}
\begin{itemize}
\item As both $\xi$ and $\Xi$ express as a product over the pearls, we can factor the $\sum_{k_\pfk|d_\pfk}$ into a product over all the pearls $\pfk$, which yields the first two identities.

\item The homogeneity relations are obtained as follows: assume $\gcd(w_e,k_\pfk)=k$, then we can write $k_\pfk=k\widehat{k_\pfk}$, $d_\pfk=k\widehat{d_\pfk}$ and $w_e=k\widehat{w_e}$, and we now have $\mathrm{gcd}(\widehat{w_e},\widehat{k_\pfk})=1$. As we have all divided by $k$, the sum is over the loop data with gcd $1$ for the diagram $\Dfk/k$. We just have to determine the power of $k$ that comes out of the sum. Replacing the above in the expressions for $\xi$ and $\Xi$, we have
\begin{align*}
\xi_\Dfk(k_\pfk)= & k^{\sum_\pfk(\val\pfk-1)+\sum_e |\{\pfk\in e\}|+|E_\square(\Dfk)|}\xi_{\Dfk/k}(\widehat{k_\pfk}) , \\
\Xi_\Dfk(k_\pfk)= & k^{\sum_\pfk (\val\pfk-1)+|E_\square(\Dfk)|}\xi_{\Dfk/k}(\widehat{k_\pfk})(q^k). \\
\end{align*}
To conclude, we only need to see that
\begin{align*}
\sum_\pfk\val\pfk = \sum_e |\{\pfk\in e\}| = & 2g-2,\\
|E_\square(\Dfk)|-|V_\square(\Dfk)| = & 1 .\\
\end{align*}
Those easily come from the following relations for the graph $\Dfk$:
$$\begin{array}{lrcl}
\blacktriangleright\text{ label of vertices: } & |V_\square|+|V_\circ|=|V| &=& g , \\[8pt]
\blacktriangleright\text{ flat vertices are bivalent: } & |E|-|E_\square| &=& 2|V_\circ| \\
 & \Rightarrow |E|&=& |E_\square|+2|V_\circ|, \\[8pt]
\blacktriangleright\text{ complement without cycle: } & |V|-|E|+|V_\circ| &=& 1  \\
 & \Rightarrow |E_\square|+|V_\circ| &=& g-1 \\
 & \text{ and } |E_\square|-|V_\square| &=& -1 , \\[8pt]
\blacktriangleright\text{ count of flags: } & \sum_\pfk\val\pfk + 2|V_\circ| &=& 2|E| \\
 & \Rightarrow \sum\val\pfk &=& 2|E_\square|+2|V_\circ|=2g-2. \\
\end{array}$$
\item Identities on the last two rows follow from a writing of the sums of $\omega$, $m_0$, $\Omega$ and $M_0$ in terms of the value of $\mathrm{gcd}(w_e,k_\pfk)$ and the homogeneity relation proven above:
\begin{align*}
\omega(\Dfk) & =\sum_{k|\Dfk}\omega_k(\Dfk)=\sum_{k|\Dfk}k^{4g-5}\omega_1(\Dfk/k)=\epsilon_{4g-5}\ast\omega_1(\Dfk), \\
m_0(\Dfk) & =\sum_{k|\Dfk}k\omega_k(\Dfk)=\sum_{k|\Dfk}k^{4g-4}\omega_1(\Dfk/k)=\epsilon_{4g-4}\ast\omega_1(\Dfk), \\
\Omega(\Dfk) & =\sum_{k|\Dfk}\Omega_k(\Dfk)=\sum_{k|\Dfk}k^{2g-3}\Omega_1(\Dfk/k)(q^k)=\epsilon_{2g-3}\ast\Omega_1(\Dfk),\\
M_0(\Dfk) & =\sum_{k|\Dfk}k\Omega_k(\Dfk)=\sum_{k|\Dfk}k^{2g-2}\Omega_1(\Dfk/k)(q^k)=\epsilon_{2g-2}\ast\Omega_1(\Dfk).\\
\end{align*}
\end{itemize}
\end{proof}

\subsubsection{Final multiplicities.} Using the homogeneity of the curve multiplicities, we have already seen the following convolution relations between the invariants counts of tropical curves, where the invariants are seen as functions on the set of curve classes, and we use the convolution from Section \ref{section convolution}:
$$\begin{array}{ccc}
M_g=\epsilon_{4g-4}\ast N_g^\prim, & & N_g=\epsilon_{4g-3}\ast N_g^\prim.\\
\end{array}$$
As these relations come from a bijection between tropical curves, they can be passed to the level of diagrams, and be extended to the refined case. Let $\mu(\Dfk)$ (resp. $\Upsilon(\Dfk)$) be the count of curves in the class $\left(\begin{smallmatrix} d & 0 \\ 0 & dn \\ \end{smallmatrix}\right)$ encoded in the diagram $\Dfk$  using multiplicity $\delta_\Gamma m_\Gamma$ (resp. $M_\Gamma$). Similarly, let $\mu_1(\Dfk)$ (resp. $\Upsilon_1(\Dfk)$) be the count of \textit{primitive} curves in the class $\left(\begin{smallmatrix} d & 0 \\ 0 & dn \\ \end{smallmatrix}\right)$ encoded in the diagram $\Dfk$ using multiplicity $\delta_\Gamma m_\Gamma(=m_\Gamma)$ (resp. $M_\Gamma(=m_\Gamma^\refined)$).

\begin{prop}\label{proposition final multiplicities point case}
We have the following equalities between functions on the space of diagrams:
$$\begin{array}{>{\displaystyle}r>{\displaystyle}lc>{\displaystyle}r>{\displaystyle}l}
m_0 & = \epsilon_{4g-4}\ast\mu_1 , & & M_0(\Dfk) & =\sum_{k|\Dfk} \Upsilon_1(\Dfk/k)(q^{k^2}), \\
\mu & = \epsilon_{4g-3}\ast\mu_1 , & & \Upsilon & = (\epsilon_{2g-2}\varphi)\ast M_0. \\
\end{array}$$
\end{prop}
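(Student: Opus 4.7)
The plan is to reduce the four identities to the bijection between tropical curves $\Gamma$ encoded in $\Dfk$ with gcd $\delta_\Gamma = k$ and primitive curves $\widehat{\Gamma}$ encoded in $\Dfk/k$, which is the diagram-level analog of the bijection exploited in the proof of Proposition \ref{proposition relation invariants counts point case}. The assignment $\Gamma \mapsto \Gamma/k$ preserves the image in $\TT A$ but divides every edge weight by $k$; since contracting pearls commutes with this operation, and since $k \mid \delta_\Gamma$ implies that $k$ divides every edge weight and every pearl degree (the latter being vertical components of sums of slopes, see Proposition \ref{proposition p marked diagram}), the resulting primitive curve $\widehat{\Gamma}$ is indeed encoded in $\Dfk/k$. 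Under this correspondence, at each of the $2g-2$ trivalent vertices one has $m_V = k^2 \widehat{m}_V$, hence $m_\Gamma = k^{4g-4}m_{\widehat{\Gamma}}$ and $m_\Gamma^\refined(q) = m_{\widehat{\Gamma}}^\refined(q^{k^2})$.

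With this bijection in hand, the first three identities follow by splitting the defining sums according to the value of $\delta_\Gamma$ and factoring out powers of $k$. For example:
\begin{align*}
m_0(\Dfk) &= \sum_{k \mid \Dfk}\sum_{\Gamma : \delta_\Gamma = k} m_\Gamma = \sum_{k \mid \Dfk} k^{4g-4}\,\mu_1(\Dfk/k) = (\epsilon_{4g-4}\ast\mu_1)(\Dfk), \\
\mu(\Dfk) &= \sum_{k \mid \Dfk} k\cdot k^{4g-4}\,\mu_1(\Dfk/k) = (\epsilon_{4g-3}\ast\mu_1)(\Dfk).
\end{align*}
The third identity $M_0(\Dfk) = \sum_{k \mid \Dfk} \Upsilon_1(\Dfk/k)(q^{k^2})$ is the refined analog, obtained directly from $m_\Gamma^\refined(q) = m_{\widehat{\Gamma}}^\refined(q^{k^2})$.

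The last identity requires unfolding the definition of $M_\Gamma$ and performing the same change of variable $\delta = kl$ as in the proof of Proposition \ref{proposition relation invariants counts point case}(iv). Splitting by $\delta_\Gamma = \delta$ and using the homogeneity $m_\Gamma^\refined(q^{1/k}) = m_{\widehat{\Gamma}}^\refined(q^{\delta^2/k})$ gives
\begin{align*}
\Upsilon(\Dfk) = \sum_{\delta \mid \Dfk}\sum_{k \mid \delta} k^{2g-2}\varphi(k)\,\Upsilon_1(\Dfk/\delta)(q^{\delta^2/k}).
\end{align*}
Substituting $l = \delta/k$ rewrites the right-hand side as $\sum_{kl \mid \Dfk} k^{2g-2}\varphi(k)\,\Upsilon_1(\Dfk/(kl))(q^{kl^2})$, and inserting the already-proven third identity inside $(\epsilon_{2g-2}\varphi) \ast M_0$ recognises this expression as $((\epsilon_{2g-2}\varphi)\ast M_0)(\Dfk)$.

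No step should be a genuine obstacle: all the substantive tropical work has been done upstream, both in Proposition \ref{proposition curves in diagram point case} (which describes the curves encoded by a given diagram) and in Proposition \ref{proposition relation invariants counts point case} (where the analogous identities are established at the level of curve classes). The argument simply transports that reasoning to the diagram level, with the only nontrivial verification being the compatibility of divisibility on the curve side with divisibility on the diagram side, which is transparent from the construction in Proposition \ref{proposition p marked diagram}.
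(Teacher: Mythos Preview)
Your proof is correct and follows essentially the same approach as the paper: both split the count of curves encoded in $\Dfk$ according to the value of $\delta_\Gamma$, use the bijection $\CCC_k(\Dfk)\leftrightarrow\CCC_1(\Dfk/k)$ together with the homogeneity relations $m_\Gamma=k^{4g-4}m_{\widehat\Gamma}$ and $m_\Gamma^\refined(q)=m_{\widehat\Gamma}^\refined(q^{k^2})$, and handle the last identity via the same substitution $\delta=kl$. The only cosmetic difference is that the paper computes $(\epsilon_{2g-2}\varphi)\ast M_0$ and shows it equals $\Upsilon$, whereas you start from $\Upsilon$ and recognise the result as $(\epsilon_{2g-2}\varphi)\ast M_0$; the intermediate expression $\sum_{kl|\Dfk}k^{2g-2}\varphi(k)\,\Upsilon_1(\Dfk/(kl))(q^{kl^2})$ is the same in both.
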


\begin{proof}
A marked diagram $\Dfk$ encodes a set $\CCC(\Dfk)$ of tropical curves in the class $\left(\begin{smallmatrix} d & 0 \\ 0 & dn \\ \end{smallmatrix}\right)$. The count of these curves using the multiplicities $m_\Gamma$ is by definition $m_0(\Dfk)$, while their count using multiplicity $m_\Gamma^\refined$ gives $M_0(\Dfk)$. We can group together the tropical curves according to the value of their gcd: if $\CCC_k(\Dfk)$ denotes the subset of curves with gcd $k$, we have
\begin{align*}
m_0(\Dfk)  = \sum_{k|\Dfk}\sum_{\Gamma\in \CCC_k(\Dfk)}m_\Gamma, & & M_0(\Dfk)  =  \sum_{k|\Dfk}\sum_{\Gamma\in \CCC_k(\Dfk)}m_\Gamma^\refined .\\
\end{align*}
Using the fact that $m_{k\Gamma}=k^{4g-4}m_\Gamma$ and $m^\refined_{k\Gamma}=m_\Gamma^\refined(q^{k^2})$, we get
\begin{align*}
m_0(\Dfk)  = \sum_{k|\Dfk}k^{4g-4}\sum_{\Gamma\in \CCC_1(\Dfk/k)}m_\Gamma , & & M_0(\Dfk)  =  \sum_{k|\Dfk} \sum_{\Gamma\in \CCC_1(\Dfk/k)}m_\Gamma^\refined(q^{k^2}),\\
\end{align*}
which give identities on the first row. Multiplying by $k$ the multiplicity, we get similarly $\mu=\epsilon_{4g-3}\ast\mu_1$. Concerning the last identity, one has:
\begin{align*}
(\epsilon_{2g-2}\varphi)\ast M_0(\Dfk) & = \sum_{k|\Dfk}k^{2g-2}\varphi(k)\left(\sum_{\Gamma\in\CCC(\Dfk/k)}m_\Gamma^\refined\right)(q^k) & \\
& = \sum_{k|\Dfk}k^{2g-2}\varphi(k)\left(\sum_{l|\Dfk/k}\sum_{\Gamma\in\CCC_l(\Dfk/k)}m_\Gamma^\refined\right)(q^k) & \\
& = \sum_{k|\Dfk}k^{2g-2}\varphi(k)\sum_{l|\Dfk/k}\sum_{\Gamma\in\CCC_1(\Dfk/kl)}m_\Gamma^\refined(q^{kl^2}) & \\
& = \sum_{\delta|\Dfk}\sum_{k|\delta}k^{2g-2}\varphi(k)\sum_{\Gamma\in\CCC_1(\Dfk/\delta)}m_\Gamma^\refined(q^{\delta^2/k}) & \text{ with }\delta=kl \\
& = \sum_{\delta|\Dfk}\sum_{\Gamma\in\CCC_\delta(\Dfk)}\sum_{k|\delta}k^{2g-2}\varphi(k)m_\Gamma^\refined(q^{1/k}) & \\
 & = \sum_{\Gamma\in\CCC(\Dfk)} M_\Gamma. & \\
\end{align*}
\end{proof}

Making the sum over the genus $g$ bidegree $(d,dn)$ diagrams, we get the following.

\begin{coro}
One has:
\begin{align*}
M_g=\epsilon_{4g-4}\ast N_g^\prim, & & BG_{g,C}=\sum_{k|C}BG_{g,C/k,1}(q^{k^2}) , \\
N_g=\epsilon_{4g-3}\ast N_g^\prim, & & R_{g,C}=(\epsilon_{2g-2}\varphi)\ast BG_{g,C}. \\
\end{align*}
\end{coro}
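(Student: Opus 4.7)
The plan is to obtain the four identities of the Corollary by summing, over all genus $g$ pearl diagrams of bidegree $(d,dn)$, the four diagram-level identities proved in Proposition~\ref{proposition final multiplicities point case}. The two ingredients needed are the identification of such sums with tropical invariants, and the compatibility of the $\NN^*$-action on diagrams with that on curve classes; both have already been prepared.

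For the first ingredient, I would invoke Propositions~\ref{proposition p marked diagram} and~\ref{proposition curves in diagram point case}: every genus $g$ tropical curve in the class $\left(\begin{smallmatrix} d & 0 \\ 0 & dn \end{smallmatrix}\right)$ passing through the stretched configuration $\P$ is encoded by a unique bidegree $(d,dn)$ pearl diagram $\Dfk$, and each $\Dfk$ encodes the set of curves counted (with the appropriate multiplicity) by $m_0,\mu,M_0,\Upsilon,\mu_1,\Upsilon_1$. Summing over all genus $g$ bidegree $(d,dn)$ diagrams therefore gives
\begin{align*}
\sum_{\Dfk} m_0(\Dfk) &= M_{g,C}, & \sum_{\Dfk} \mu(\Dfk) &= N_{g,C}, \\
\sum_{\Dfk} M_0(\Dfk) &= BG_{g,C}, & \sum_{\Dfk} \Upsilon(\Dfk) &= R_{g,C}, \\
\sum_{\Dfk} \mu_1(\Dfk) &= N^{\prim}_{g,C}, & \sum_{\Dfk} \Upsilon_1(\Dfk) &= BG_{g,C,1}.
\end{align*}

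For the second ingredient, the scaling $\Dfk' \mapsto k\Dfk'$ from Section~\ref{section pearl diagrams definition} multiplies both components of the bidegree by $k$, yielding a bijection between genus $g$ bidegree $(d/k,dn/k)$ diagrams and the bidegree $(d,dn)$ diagrams that are divisible by $k$. Exchanging summations, for any arithmetic $u$ and any $F$ on diagrams,
$$\sum_{\Dfk \text{ bidegree }(d,dn)} \sum_{k | \Dfk} u(k)\, F(\Dfk/k) \;=\; \sum_{k | C} u(k) \sum_{\Dfk' \text{ bidegree }(d/k,dn/k)} F(\Dfk'),$$
and similarly in the deformed convolution when $F$ takes values in $\ZZ[q^{\pm 1/2}]$, the substitution $q\mapsto q^k$ being carried along.

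Applying this formula to the four identities of Proposition~\ref{proposition final multiplicities point case} then gives the Corollary: the first two rows are immediate from the definitions; for the refined cases one gets $\sum_\Dfk M_0(\Dfk)=\sum_{k|C}BG_{g,C/k,1}(q^{k^2})$ and $\sum_\Dfk \Upsilon(\Dfk)=\sum_{k|C}k^{2g-2}\varphi(k)\,BG_{g,C/k}(q^k)$, which is exactly $(\epsilon_{2g-2}\varphi)\ast BG_g$ evaluated at $C$. Since all the combinatorial content has been absorbed into the preceding propositions, the only mild care required is in tracking the exponents of $q$ in the two refined identities; this is routine and causes no obstacle.
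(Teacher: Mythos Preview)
Your proposal is correct and takes essentially the same approach as the paper: the paper's proof is the single sentence ``Making the sum over the genus $g$ bidegree $(d,dn)$ diagrams, we get the following'', and you have simply spelled out the two routine ingredients (identifying the diagram sums with the invariants via Propositions~\ref{proposition p marked diagram} and~\ref{proposition curves in diagram point case}, and exchanging the sum over $k|\Dfk$ with the sum over diagrams using the $\NN^*$-action). Note also that the three identities in the left column and top-right were already established directly at the level of tropical curves in Proposition~\ref{proposition relation invariants counts point case}, so only the last identity $R_{g,C}=(\epsilon_{2g-2}\varphi)\ast BG_{g,C}$ genuinely requires the diagram summation here.
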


These relations mean that in the classical case as well as in the refined case, the knowing of one family of invariants is sufficient to determine the others. From the above identities, we have simultaneously
$$m_0=\epsilon_{4g-4}\ast m_1 \text{ and }m_0=\epsilon_{4g-4}\ast\mu_1.$$
Thus, we get that $m_1=\mu_1$. This identity may appear as a miracle because making the sum over all diagrams, it asserts that the count of \textit{primitive} curves in an arbitrary class $\left(\begin{smallmatrix} d_1 & 0 \\ 0 & d_2 \\ \end{smallmatrix}\right)$ is equal to the number of curves in the primitive class having same square: $\left(\begin{smallmatrix} 1 & 0 \\ 0 & d_1 d_2 \\ \end{smallmatrix}\right)$. In particular, it only depends on $d_1 d_2$ and not on the divisibility of the class.

We now give the main statement concerning the enumeration of curves via the diagrams.

\begin{theo}\label{theorem diagram enumeration g points} We have the following table giving a correspondence between the multiplicity with which to count the pearl diagrams of genus $g$ and bidegree $(d_1,d_2)$, and the invariants that their count give:
\begin{center}
\begin{tabular}{cc}
\begin{tabular}{c|c}
Multiplicity & Invariant computed \\
\hline
$m_0$ & $M_{g,\left(\begin{smallmatrix}
d_1 & 0 \\
0 & d_2 \\
\end{smallmatrix}\right) }$ \\
$m_1(=\mu_1=\omega_1)$ & $N_{g,\left(\begin{smallmatrix}
d_1 & 1 \\
0 & d_2 \\
\end{smallmatrix}\right) }$\\
$m_a$ & $M_{g,\left(\begin{smallmatrix}
d_1 & a \\
0 & d_2 \\
\end{smallmatrix}\right) }$ \\
\hline
$\mu_1(=m_1=\omega_1)$ & $N_{g,\left(\begin{smallmatrix}
d_1 & 0 \\
0 & d_2 \\
\end{smallmatrix}\right),1 }$ \\
$\mu$ & $N_{g,\left(\begin{smallmatrix}
d_1 & 0 \\
0 & d_2 \\
\end{smallmatrix}\right) }$ \\
\end{tabular} &
\begin{tabular}{c|c}
Multiplicity & Invariant computed \\
\hline
$M_0$ & $BG_{g,\left(\begin{smallmatrix}
d_1 & 0 \\
0 & d_2 \\
\end{smallmatrix}\right) }$ \\
$M_1$ & $BG_{g,\left(\begin{smallmatrix}
d_1 & 1 \\
0 & d_2 \\
\end{smallmatrix}\right) }$\\
$M_a$ & $BG_{g,\left(\begin{smallmatrix}
d_1 & a \\
0 & d_2 \\
\end{smallmatrix}\right) }$ \\
\hline
$\Upsilon_1$ & $BG_{g,\left(\begin{smallmatrix}
d_1 & 0 \\
0 & d_2 \\
\end{smallmatrix}\right),1 }$ \\
$\Upsilon$ & $R_{g,\left(\begin{smallmatrix}
d_1 & 0 \\
0 & d_2 \\
\end{smallmatrix}\right) }$ \\
\end{tabular}
\end{tabular}
\end{center}
\end{theo}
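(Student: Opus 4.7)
The plan is to assemble the table entry by entry from the two key ingredients already established: Proposition \ref{proposition p marked diagram}, which produces a pearl diagram of genus $g$ and bidegree $(d_1,d_2)$ from any tropical curve in class $\left(\begin{smallmatrix} d_1 & a \\ 0 & d_2 \end{smallmatrix}\right)$ through $\P$, and Proposition \ref{proposition curves in diagram point case}, which gives the converse reconstruction, parametrized by a choice of loop data $(k_\pfk)$ with $\gcd(w_e,k_\pfk)\mid a$, together with an explicit count of curves and a common value of $m_\Gamma$ and $m_\Gamma^\refined$. Combining these two statements yields a partition of the set of tropical solutions in class $\left(\begin{smallmatrix} d_1 & a \\ 0 & d_2 \end{smallmatrix}\right)$ through $\P$ into the fibers over pearl diagrams, with the contribution of each fiber being precisely $m_a(\Dfk)$ in the classical case and $M_a(\Dfk)$ in the refined case. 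This immediately identifies the rows labeled by $m_a$ and $M_a$ as computing $M_{g,\left(\begin{smallmatrix} d_1 & a \\ 0 & d_2 \end{smallmatrix}\right)}$ and $BG_{g,\left(\begin{smallmatrix} d_1 & a \\ 0 & d_2 \end{smallmatrix}\right)}$ respectively, by definition of those invariants as sums of $m_\Gamma$ (resp. $m_\Gamma^\refined$) over tropical curves through $\P$.

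The specialization $a=0$ gives the entries for $m_0$ and $M_0$. For $a=1$ the class $\left(\begin{smallmatrix} d_1 & 1 \\ 0 & d_2 \end{smallmatrix}\right)$ is automatically primitive, hence $N_{g,\left(\begin{smallmatrix} d_1 & 1 \\ 0 & d_2 \end{smallmatrix}\right)}$ coincides with $M_{g,\left(\begin{smallmatrix} d_1 & 1 \\ 0 & d_2 \end{smallmatrix}\right)}$ because $\delta_\Gamma=1$ for every solution; this explains the $m_1$ and $M_1$ entries. Moreover, the condition $\gcd(w_e,k_\pfk)=1$ forced by $a=1$ is, after identifying $(\Dfk,k_\pfk)$ with a curve, exactly the primitivity condition $\delta_\Gamma=1$ in class $\left(\begin{smallmatrix} d_1 & 0 \\ 0 & d_2 \end{smallmatrix}\right)$. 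Therefore the same quantity $\omega_1(\Dfk)=m_1(\Dfk)=\mu_1(\Dfk)$ also computes the primitive-gcd count $N_{g,\left(\begin{smallmatrix} d_1 & 0 \\ 0 & d_2 \end{smallmatrix}\right),1}$, and similarly the refined primitive count $BG_{g,\left(\begin{smallmatrix} d_1 & 0 \\ 0 & d_2 \end{smallmatrix}\right),1}$ is $\Upsilon_1(\Dfk)=M_1(\Dfk)$. This establishes the identification of the primitive-count rows.

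For the remaining rows $\mu$ and $\Upsilon$, the strategy is to use the convolution identities of Proposition \ref{proposition final multiplicities point case}. By definition of $N_{g,C}$ and $R_{g,C}$ as sums over all (not necessarily primitive) curves through $\P$ with multiplicity $\delta_\Gamma m_\Gamma$ and $M_\Gamma$ respectively, the same bijection with (diagram, loop data) pairs, broken up according to the value $\delta_\Gamma$ of the gcd of the resulting curve, yields $N_{g,C}=\sum_\Dfk \mu(\Dfk)$ and $R_{g,C}=\sum_\Dfk \Upsilon(\Dfk)$. The convolution identities $\mu=\epsilon_{4g-3}\ast\mu_1$ and $\Upsilon=(\epsilon_{2g-2}\varphi)\ast M_0$ derived in Proposition \ref{proposition final multiplicities point case} then match precisely the expressions for $N_{g,C}$ and $R_{g,C}$ in Proposition \ref{proposition relation invariants counts point case} (and its FLS analogue), showing that these diagram multiplicities compute the advertised invariants.

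The main technical obstacle is bookkeeping: one must verify that for each value of $a$ the correspondence between loop data $(k_\pfk)$ satisfying $\gcd(w_e,k_\pfk)\mid a$ and tropical curves in class $\left(\begin{smallmatrix} d_1 & a \\ 0 & d_2 \end{smallmatrix}\right)$ through $\P$ is genuinely bijective (no double-counting coming from deck transformations or choices of marked-point lifts on a pearl), and that the gcd of the reconstructed curve equals $\gcd(w_e,k_\pfk)$ in the relevant cases so that the identification with the fixed-gcd invariants is correct. Both of these are already handled inside Proposition \ref{proposition curves in diagram point case}, so the proof of the theorem reduces to a row-by-row application of the above dictionary together with the convolution formulas; no further computation is required.
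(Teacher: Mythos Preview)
Your argument for most of the table is fine: the entries for $m_a$, $M_a$, $\mu$, $\mu_1$, $\Upsilon$, $\Upsilon_1$ are all literally the definitions of those diagram multiplicities, combined with the partition of the set of tropical solutions into fibers over pearl diagrams coming from Propositions~\ref{proposition p marked diagram} and~\ref{proposition curves in diagram point case}. The paper's own proof is in fact the single sentence ``the theorem follows from the definition of the diagram multiplicities'', and this is the content you are unpacking.

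There is, however, a genuine error in your treatment of the parenthetical equalities $m_1=\mu_1=\omega_1$. You claim that ``the condition $\gcd(w_e,k_\pfk)=1$ forced by $a=1$ is, after identifying $(\Dfk,k_\pfk)$ with a curve, exactly the primitivity condition $\delta_\Gamma=1$'', and that this is ``already handled inside Proposition~\ref{proposition curves in diagram point case}''. Both assertions are false. Proposition~\ref{proposition curves in diagram point case} says nothing about $\delta_\Gamma$; the paper explicitly warns immediately after that proposition that ``the pearl diagram approach does not allow to easily recover the gcd of the tropical curves encoded in the diagram. Some of the curves might be primitive, but maybe not all of them.'' The several tropical curves associated to a fixed $(\Dfk,(k_\pfk))$ share a common $m_\Gamma$ and $m_\Gamma^\refined$, but they need not share a common $\delta_\Gamma$, so $\gcd(w_e,k_\pfk)=1$ does not correspond bijectively to $\delta_\Gamma=1$.

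The equality $m_1=\mu_1$ is instead obtained \emph{indirectly} just before the theorem: one has $m_0=\epsilon_{4g-4}\ast\omega_1$ (from the auxiliary-multiplicity proposition) and $m_0=\epsilon_{4g-4}\ast\mu_1$ (from Proposition~\ref{proposition final multiplicities point case}), and convolution by $\epsilon_{4g-4}$ is invertible, so $\omega_1=\mu_1$; since $m_1=\omega_1$ trivially, all three coincide. Your proof should invoke this convolution argument rather than the nonexistent direct bijection. Once that correction is made, your write-up is an accurate (if more verbose) expansion of the paper's one-line proof.
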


\begin{proof}
The theorem follows from the definition of the diagram multiplicities.
\end{proof}

\subsection{The multiple cover formulas}

The diagram multiplicities give an algorithm to compute the invariants. However, the numerous relations between these multiplicities allow one to prove \textit{multiple cover formulas} which once proven provide a far more effective algorithm for the computation of the invariants. One can then forget about all the diagram considerations.

\begin{theo}\label{theorem multiple cover formulas point}
We have the following multiple cover formulas
$$\begin{array}{c>{\displaystyle}r>{\displaystyle}l}
(i) & M_{g,\left(\begin{smallmatrix}
d_1 & 0 \\
0 & d_2 \\
\end{smallmatrix}\right)} & =\sum_{k|\mathrm{gcd}(d_1,d_2)}k^{4g-4}N_{g,\left(\begin{smallmatrix}
1 & 0 \\
0 & d_1 d_2/k^2 \\
\end{smallmatrix}\right)} , \\
(ii) & N_{g,\left(\begin{smallmatrix}
d_1 & 0 \\
0 & d_2 \\
\end{smallmatrix}\right)} & =\sum_{k|\mathrm{gcd}(d_1,d_2)}k^{4g-3}N_{g,\left(\begin{smallmatrix}
1 & 0 \\
0 & d_1 d_2/k^2 \\
\end{smallmatrix}\right)} , \\
(iii) & BG_{g,\left(\begin{smallmatrix}
d_1 & 0 \\
0 & d_2 \\
\end{smallmatrix}\right)} & =\sum_{k|\mathrm{gcd}(d_1,d_2)}k^{2g-2}BG_{g,\left(\begin{smallmatrix}
1 & 0 \\
0 & d_1 d_2/k^2 \\
\end{smallmatrix}\right)}(q^k). \\
(iv) & R_{g,\left(\begin{smallmatrix}
d_1 & 0 \\
0 & d_2 \\
\end{smallmatrix}\right)} & =\sum_{k|\mathrm{gcd}(d_1,d_2)}k^{2g-1}R_{g,\left(\begin{smallmatrix}
1 & 0 \\
0 & d_1 d_2/k^2 \\
\end{smallmatrix}\right)}(q^k). \\
\end{array}$$
\end{theo}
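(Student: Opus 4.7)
The proof proceeds by unpacking each invariant via its pearl diagram description from Theorem \ref{theorem diagram enumeration g points} and then applying the convolution identities of Proposition \ref{proposition final multiplicities point case}. The key inputs are: the identifications $m_1=\mu_1$ and $M_1=\Omega_1$ (so the ``primitive'' multiplicity counts agree with the counts in the $a=1$ class); the $SL_2(\ZZ)$-equivalence between primitive classes of equal self-intersection, giving $N_{g,\left(\begin{smallmatrix} d_1/k & 1 \\ 0 & d_2/k \end{smallmatrix}\right)}=N_{g,\left(\begin{smallmatrix} 1 & 0 \\ 0 & d_1d_2/k^2 \end{smallmatrix}\right)}$ and analogously for $BG$; and the classical Euler-totient identity $\sum_{l\mid n}\varphi(l)=n$.

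For (i) and (ii), I would start from $M_{g,\left(\begin{smallmatrix} d_1 & 0 \\ 0 & d_2 \end{smallmatrix}\right)}=\sum_{\Dfk}m_0(\Dfk)$ and $N_{g,\left(\begin{smallmatrix} d_1 & 0 \\ 0 & d_2 \end{smallmatrix}\right)}=\sum_{\Dfk}\mu(\Dfk)$, summing over pearl diagrams of bidegree $(d_1,d_2)$, and then apply $m_0=\epsilon_{4g-4}\ast\mu_1$ and $\mu=\epsilon_{4g-3}\ast\mu_1$. Regrouping by the scaling factor $k$ (the diagrams of bidegree $(d_1,d_2)$ divisible by $k$ are in bijection with diagrams of bidegree $(d_1/k,d_2/k)$, which exists exactly when $k\mid\mathrm{gcd}(d_1,d_2)$) yields
$$M_{g,\left(\begin{smallmatrix} d_1 & 0 \\ 0 & d_2 \end{smallmatrix}\right)}=\sum_{k\mid\mathrm{gcd}(d_1,d_2)}k^{4g-4}N_{g,\left(\begin{smallmatrix} d_1/k & 1 \\ 0 & d_2/k \end{smallmatrix}\right)},$$
and analogously for $N$ with exponent $4g-3$. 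Passing to the $SL_2(\ZZ)$-representative of the primitive class closes (i) and (ii). Formula (iii) is obtained identically, but starting from the deformed convolution $M_0=\epsilon_{2g-2}\ast M_1$, in which the $q$-variable gets rescaled to $q^k$ at each divisor, thereby producing the $(q^k)$ in the statement.

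For (iv), I would combine (iii) with the already-proved relation $R_{g,C}=(\epsilon_{2g-2}\varphi)\ast BG_{g,C}$ and set $n=lk$:
\begin{align*}
R_{g,\left(\begin{smallmatrix} d_1 & 0 \\ 0 & d_2 \end{smallmatrix}\right)}
&=\sum_{l\mid\mathrm{gcd}(d_1,d_2)}l^{2g-2}\varphi(l)\sum_{k\mid\mathrm{gcd}(d_1,d_2)/l}k^{2g-2}BG_{g,\left(\begin{smallmatrix} 1 & 0 \\ 0 & d_1d_2/(lk)^2 \end{smallmatrix}\right)}(q^{lk}) \\
&=\sum_{n\mid\mathrm{gcd}(d_1,d_2)}n^{2g-2}\Bigl(\sum_{l\mid n}\varphi(l)\Bigr)BG_{g,\left(\begin{smallmatrix} 1 & 0 \\ 0 & d_1d_2/n^2 \end{smallmatrix}\right)}(q^n).
\end{align*}
Applying $\sum_{l\mid n}\varphi(l)=n$ together with the observation that for a primitive class one has $R=BG$ (every tropical curve in such a class has $\delta_\Gamma=1$, so $M_\Gamma$ reduces to $m_\Gamma^{\refined}$) produces (iv).

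The main obstacle is not in these convolution manipulations, which are essentially formal once the preceding setup is in place, but rather in the two identities $m_0=\epsilon_{4g-4}\ast\mu_1$ and $M_0=\epsilon_{2g-2}\ast M_1$ that drive everything. This is where the pearl diagram machinery pays off: the $\mathrm{gcd}(w_e,k_\pfk)$ factor appearing in the reconstruction count of Proposition \ref{proposition curves in diagram point case} is precisely what forces the homogeneity relations $\omega_k=k^{4g-5}\omega_1$ and $\Omega_k=k^{2g-3}\Omega_1(q^k)$, and hence the clean multiple-cover convolution structure that the theorem records.
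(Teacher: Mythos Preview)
Your proposal is correct and follows essentially the same route as the paper's proof: for (i)--(iii) you sum the diagram-level convolution identities $m_0=\epsilon_{4g-4}\ast m_1$, $\mu=\epsilon_{4g-3}\ast\mu_1$, $M_0=\epsilon_{2g-2}\ast M_1$ over all bidegree $(d_1,d_2)$ pearl diagrams and then pass to the primitive representative, exactly as the paper does. The only cosmetic difference is in (iv): the paper works at the diagram level, writing $\Upsilon=(\epsilon_{2g-2}\varphi)\ast\epsilon_{2g-2}\ast M_1$ and computing the arithmetic convolution $(\epsilon_{2g-2}\varphi)\ast\epsilon_{2g-2}=\epsilon_{2g-1}$ directly, whereas you work at the invariant level, composing the corollary $R_{g,C}=(\epsilon_{2g-2}\varphi)\ast BG_{g,C}$ with the already-established (iii) and then invoking $R=BG$ on primitive classes; the two computations are the same identity $\sum_{l\mid n}\varphi(l)=n$ read in two equivalent ways.
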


\begin{proof}
\begin{enumerate}[label=(\roman*)]
\item The first formula is obtained by taking the sum over all diagrams of the identity $m_0=\epsilon_{4g-4}\ast m_1$, which is true since $m_1=\omega_1$. On the left side we get $M_{g,\left(\begin{smallmatrix}
d_1 & 0 \\
0 & d_2 \\
\end{smallmatrix}\right)}$ and on the right side $N_{g,\left(\begin{smallmatrix}
d_1/k & 1 \\
0 & d_2/k \\
\end{smallmatrix}\right)}=N_{g,\left(\begin{smallmatrix}
1 & 0 \\
0 & d_1d_2/k^2 \\
\end{smallmatrix}\right)}$ for each summand in the convolution.
\item The second formula follows from the identity $\mu_1=m_1$, which are both equal to $\omega_1$. The number of primitive curves $N_{g,\left(\begin{smallmatrix}
d_1 & 0 \\
0 & d_2 \\
\end{smallmatrix}\right),1 }$ in the class $\left(\begin{smallmatrix}
d_1 & 0 \\
0 & d_2 \\
\end{smallmatrix}\right)$ is thus equal to $N_{g,\left(\begin{smallmatrix}
d_1 & 1 \\
0 & d_2 \\
\end{smallmatrix}\right)}=N_{g,\left(\begin{smallmatrix}
1 & 0 \\
0 & d_1d_2 \\
\end{smallmatrix}\right)}$. The identity $N_g=\epsilon_{4g-3}\ast N_g^\prim$ yields the desired relation.
\item As $M_1=\Omega_1$, the relation $M_0=\epsilon_{2g-2}\ast\Omega_1$ summed over all the pearl diagrams can already be interpreted as a multiple cover formula.
\item Last, following Proposition \ref{proposition final multiplicities point case}, one has the relation $\Upsilon=(\epsilon_{2g-2}\varphi)\ast M_0$ among diagram multiplicities. Thus, one has
$$\Upsilon=(\epsilon_{2g-2}\varphi)\ast\epsilon_{2g-2}\ast M_1.$$
Moreover,
\begin{align*}
(\epsilon_{2g-2}\varphi)\ast\epsilon_{2g-2}(d) & =\sum_{k|d}k^{2g-2}\varphi(k)\left(\frac{d}{k}\right)^{2g-2} \\
 & = d^{2g-2}\sum_{k|d}\varphi(k) \\
 & = d^{2g-1}.\\
\end{align*}
As multiplicity $\Upsilon$ yields $R_{g,C}$ and $M_1$ yields the count for the primitive class, the sum over all pearl diagrams gives the second refined multiple cover formula.
\end{enumerate}

\end{proof}

\begin{rem}
The refined invariants introduced in \cite{blomme2022abelian1} using the refined multiplicity $\delta_\Gamma m_\Gamma^\refined$ can also be recovered by convoluting the previous functions. However, they do not seem to satisfy such a cover formula.
\end{rem}

Using the correspondence theorem from \cite{nishinou2020realization} stating $N_{g,C}=\N_{g,C}$, we immediately deduce the following particular case of Conjecture \ref{conj oberdieck formula} from G. Oberdieck.

\begin{coro}
The complex invariant $\N_{g,C}$ satisfy the multiple cover formula
$$\N_{g,C}=\sum_{k|C}k^{4g-3}\N_{g,\varphi_k(C/k)},$$
where $\varphi_k(C/k)$ is a primitive class having the same square as $C/k$.
\end{coro}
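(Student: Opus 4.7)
The plan is to read off the corollary directly from part (ii) of Theorem \ref{theorem multiple cover formulas point} combined with the correspondence theorem of \cite{nishinou2020realization}, together with the general principle that the invariants $\N_{g,C}$ and $N_{g,C}$ depend only on the equivalence class of $C$ under $SL_2(\ZZ)\times SL_2(\ZZ)$, i.e.\ on the pair $(C^2,\delta_C)$ where $\delta_C$ is the divisibility of $C$.

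First I would reduce the arbitrary curve class $C$ to a diagonal representative. Up to a change of basis of $N$ and of $\Lambda$, any class $C\in\Lambda\otimes N$ with non-zero self-intersection can be brought to Smith normal form $\left(\begin{smallmatrix} d_1 & 0 \\ 0 & d_2\end{smallmatrix}\right)$ with $d_1\mid d_2$, and in that case $\gcd(d_1,d_2)=d_1$ is the divisibility of $C$ while $C^2=2d_1d_2$. Since both the complex invariant $\N_{g,C}$ and the tropical invariant $N_{g,C}$ depend only on this pair, and since the divisor set $\{k:k\mid C\}$ is exactly $\{k:k\mid \gcd(d_1,d_2)\}$, the statement to be proven reduces to
\[
\N_{g,\left(\begin{smallmatrix} d_1 & 0 \\ 0 & d_2\end{smallmatrix}\right)}=\sum_{k\mid\gcd(d_1,d_2)}k^{4g-3}\,\N_{g,\left(\begin{smallmatrix} 1 & 0 \\ 0 & d_1d_2/k^2\end{smallmatrix}\right)},
\]
which matches the required form once one observes that the class $\left(\begin{smallmatrix} 1 & 0 \\ 0 & d_1d_2/k^2\end{smallmatrix}\right)$ is a primitive class whose self-intersection equals $(C/k)^2=2d_1d_2/k^2$, so it plays the role of $\varphi_k(C/k)$.

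Second, I would invoke Nishinou's correspondence theorem from \cite{nishinou2020realization}, refined with the multiplicity formula $\delta_\Gamma m_\Gamma$ from \cite{blomme2022abelian1}, which gives the equality $\N_{g,C}=N_{g,C}$ for any class $C$ with $C^2\neq 0$. Applying this identification on both sides of part (ii) of Theorem \ref{theorem multiple cover formulas point} turns the tropical multiple cover formula into the claimed complex one. The one place requiring care is to match the primitive tropical count on the right-hand side with $\N_{g,\varphi_k(C/k)}$: this again uses invariance of $\N_{g,\bullet}$ under $SL_2(\ZZ)\times SL_2(\ZZ)$-equivalence, applied to the primitive diagonal class $\left(\begin{smallmatrix} 1 & 0 \\ 0 & d_1d_2/k^2\end{smallmatrix}\right)$.

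The only mild obstacle is bookkeeping: one must make sure that the divisibility used in Oberdieck's statement (``$k\mid C$'' in $\Lambda\otimes N$) and the divisibility $k\mid\gcd(d_1,d_2)$ appearing in Theorem \ref{theorem multiple cover formulas point} really coincide, and that the notation $\varphi_k(C/k)$ reduces in our rank-two setting to ``the primitive class with the same self-intersection as $C/k$''. Both are standard and follow from the classification of elements of $\Lambda\otimes N$ up to lattice automorphisms; once this identification is fixed, the corollary is immediate.
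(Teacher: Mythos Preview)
Your proposal is correct and follows exactly the paper's approach: the corollary is deduced immediately from part (ii) of Theorem \ref{theorem multiple cover formulas point} via the correspondence theorem $\N_{g,C}=N_{g,C}$ of \cite{nishinou2020realization}. The paper in fact gives no separate proof beyond that one sentence, so your additional bookkeeping (Smith normal form, matching divisibilities, identifying $\varphi_k(C/k)$ with the primitive diagonal class) simply spells out details the paper leaves implicit.
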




\section{FLS-diagrams for curves in a linear system}

We now adapt the setting from the previous section to work for the invariants dealing with curves in a fixed linear system. We follow the exact same steps. Thus, we skip some of the details and refer to the previous section for more.

\subsection{From tropical curves to FLS-diagrams}

We consider a stretched abelian surface $\TT A$ with the corresponding curve class $C=\left(\begin{smallmatrix} d_1 & a \\
0 & d_2\\
\end{smallmatrix}\right)$ and an hexagonal chosen fundamental domain. Choose a doubly stretched configuration $\P$ of $g-2$ points: points are spread horizontally, but not in a uniform way as they stay close to the left of the fundamental domain. Let $0<l\ll x_1<\cdots<x_{g-2}=x\ll L$ be their horizontal coordinate in the fundamental domain, so that $|x_i-x_j|\gg l$ and $L\gg x$. Let $\gamma\subset\TT A$ be the vertical loop in the class $\lambda_2$ that consists of the two left side of the hexagon. Recall that the boundary of the hexagon is the image of a genus $2$ tropical curve $\Gamma_0$ in the class $C$. These choices are recapped on Figure \ref{fig-FLS-choice}. We call the \textit{point zone} the points in the fundamental domain with horizontal coordinate in $[x_1/2;2x]$, the \textit{zero-zone} those with first coordinate in $[0;x_1/2]$, and \textit{far zone} those with horizontal coordinate in $[2x;L]$.

\begin{figure}
\centering
\begin{tikzpicture}[line cap=round,line join=round,>=triangle 45,x=0.4cm,y=0.4cm]
\draw (0,0)--++(25,0);
\draw[line width=2pt] (0,0)--++(0,2)--++(0.5,0.5);
\draw (0.5,2.5)--++(25,0);
\draw (0,1) node[left] {$\gamma$};
\draw (25,0)--++(0.5,0.5)--++(0,2);
\draw (3,1.25) node {$\times$} node[below] {$p_1$};
\draw (6,1.25) node {$\cdots$};
\draw (9,1.25) node {$\times$} node[below] {$p_{g-2}$};
\draw[<->] (0,-1)--(1.5,-1) node[midway,below] {0-zone};
\draw[<->] (1.5,-1)--(12,-1) node[midway,below] {point-zone};
\draw[<->] (12,-1)--(25,-1) node[midway,below] {far-zone};
\end{tikzpicture}
\caption{\label{fig-FLS-choice}Choice of constraints for the fixed linear system problem}
\end{figure}
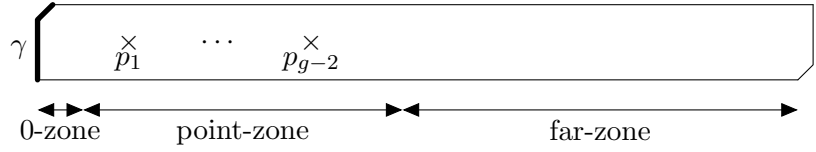

If $h:\Gamma\to\TT A$ is a genus $g$ tropical curve in the class $C$ passing through $\P$ and belonging to the linear system of $\Gamma_0$, we get a diagram $\Dfk$ out of $\Gamma$ exactly as in the point case. The only difference is that the labeling of vertices is indexed by $\P=\{p_1,\dots,p_{g-2}\}$, and a pearl may receive several labels or none. The following proposition is analog to Proposition \ref{proposition p marked diagram} for the case of curves in a linear system.

\begin{prop}
Let $h:\Gamma\to\TT A$ be a genus $g$ parametrized tropical curve in the class $\left(\begin{smallmatrix} d_1 & a \\
0 & d_2\\
\end{smallmatrix}\right)$ and linear system of $\Gamma_0$ passing through the doubly stretched configuration $\P$ of $g-2$ points. Let $\Dfk$ be the diagram obtained from $h:\Gamma\to\TT A$. Then there is a unique pearl without label in the far zone or the zero zone. We label it with $\infty$. with this labeling, $\Dfk$ is a FLS-diagram of genus $g$ and bidegree $(d_1,d_2)$. Furthermore, each pearl is homeomorphic to a circle.
\end{prop}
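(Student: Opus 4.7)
The proof follows the template of Proposition \ref{proposition p marked diagram}, with one new ingredient from \cite{blomme2022abelian2}: the complement of $\P$ in $\Gamma$ deformation retracts onto a connected subgraph $\Sigma$ of genus $2$, rather than onto a tree.

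First I would check that each pearl $\pfk\subset\Gamma$ is a circle by the same disconnecting-edge argument as in Proposition \ref{proposition p marked diagram}: summing the balancing condition on one side of a disconnecting edge would force its slope to be horizontal, contradicting $\pfk$ being non-horizontal. Next, the stretching argument of Proposition \ref{proposition p marked diagram} applies verbatim: Lemmas \ref{lem-finite-slopes} and \ref{lem-finite-length} bound slopes and lengths of non-horizontal subpaths by constants depending only on $C$ and $l$, whereas distinct marked points satisfy $|x_i-x_j|\gg l$, so no pearl can contain two marked points.

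I would then extract the FLS-diagram structure by a counting argument. Since $b_1(\Gamma)=g$ and the connected retract $\Sigma$ has $b_1=2$, removing the $g-2$ marked points from $\Gamma$ must decrease $b_1$ by exactly $g-2$ without disconnecting; in particular no marked point lies on a bridge horizontal edge, and each removed point either opens a pearl or cuts a horizontal cycle. Since each pearl is a circle carrying at most one marked point, the bookkeeping forces $|V_\square(\Dfk)|=g-1$, $b_1(\Dfk)=1$, and exactly one pearl to be unlabeled. Attaching $\infty$ to this pearl yields (a'), while (d') follows from $b_1(\Dfk)=1$; axioms (b), (c) and the bidegree $(d_1,d_2)$ are verified exactly as in Proposition \ref{proposition p marked diagram}.

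The main obstacle is to locate the $\infty$-pearl $\pfk_\infty$ in the zero-zone or far-zone. If $\pfk_\infty$ were strictly between two consecutive marked-point positions $x_i<x_{i+1}$ in the point-zone, the two horizontal edges of $\Dfk$ adjacent to $\pfk_\infty$ could be rescaled oppositely so as to translate $\pfk_\infty$ horizontally within a nontrivial open interval, without disturbing any other constraint imposed by $\P$. Because $\pfk_\infty$ carries the only vertical-direction loop of $\Sigma$ and the other loop is horizontal, the linear-system integrality criterion of Section \ref{sec-curves-in-FLS} depends on the horizontal position of $\pfk_\infty$ through locally constant integer cocycles on each interval between consecutive marked points. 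This would produce a positive-dimensional family of FLS-curves through $\P$, contradicting the finiteness guaranteed by genericity; hence $\pfk_\infty$ must sit in the zero-zone or the far-zone. This localization step is where the doubly stretched condition $L\gg|x_i-x_j|\gg l$ is essential, as it guarantees both enough room for the deformation family inside the point-zone and the piecewise-constant behavior of the FLS criterion outside it.
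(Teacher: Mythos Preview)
Your localization of the unmarked pearl $\pfk_\infty$ is where the argument breaks. The linear-system criterion of Section~\ref{sec-curves-in-FLS} is \emph{not} locally constant in the horizontal position of $\pfk_\infty$: computing $K(\lambda_1)$ requires summing $\det(p,u_p)=\beta(p)x(p)-\alpha(p)y(p)$ over intersection points with a horizontal loop, and a horizontal translation of $\pfk_\infty$ by $\Delta x$ changes this sum by $d_{\pfk_\infty}\Delta x$. So the condition $K\in M$ cuts out a discrete set of positions, not a positive-dimensional family, and no contradiction with finiteness follows. The paper's proof uses this same quantity $K(\lambda_1)$ in the opposite direction: it bounds $|K(\lambda_1)|$ and $|K(\lambda_2)|$ a priori from the lattice structure of $M\subset\Lambda^*_\RR$ together with the doubly-stretched inequalities $L\gg x\gg l\gg\varepsilon$, and then shows that if no pearl lies in the far zone the bound forces \emph{all} non-horizontal edges into the zero-zone. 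This quantitative use of the linear-system condition is the missing ingredient.

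Two smaller points. First, the disconnecting-edge argument only shows each pearl has genus at least $1$; to get exactly $1$ you need the injectivity of $H_1(\Sigma,\ZZ)\hookrightarrow H_1(\TT A,\ZZ)$, since two independent cycles in a single pearl would both realize multiples of $\lambda_2$ and hence be linearly dependent in $H_1(\TT A,\ZZ)$. Second, your bookkeeping should give $|V(\Dfk)|=g-1$ (total vertices, including flat ones), not $|V_\square(\Dfk)|=g-1$.
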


\begin{rem}\label{remark index loop diagram}
The cycle in the complement of the marked points plays a role analog to the genus $2$ graph $\Sigma$ on which $\Gamma\backslash\P$ retracts. Similarly to the notation $\Lambda_\Gamma^\Sigma$, we denote by $\Lambda_\Dfk$ the class realized by the cycle inside $H_1(S^1,\ZZ)$.
\end{rem}

\begin{proof}
We show that the diagram $\Dfk$ satisfies the definition of a FLS-diagram.
\begin{enumerate}[label=(\alph*)]
\item[(a')] The vertices are split into flat vertices and pearls. We need to show that each pearl has a unique label, except one in the zero or far zone. As in the point case, since the point configuration is stretched, by Lemmas \ref{lem-finite-slopes} and \ref{lem-finite-length}, each pearl has at most one label. Furthermore, as in the point case, pearls have no disconnecting edges and are at least of genus $1$. We know that the complement of marked points on $\Gamma$ is connected and retracts on a genus $2$ subgraph $\Sigma$, with $H_1(\Sigma,\ZZ)\hookrightarrow H_1(\TT A,\ZZ)$. Therefore, at most one pearl is without label.

\textbf{There is unique pearl without label in the far zone:} As $\Gamma$ is in the linear system of $\Gamma_0$, the element $K\in\Lambda_\RR^*$ defined in Section \ref{sec-curves-in-FLS} lies in the image of $S^T:M\to\Lambda_\RR^*$. Assume there is no pearl in the far zone, \textit{i.e.} every edge in the far zone is horizontal. We use the information to bound $K$. Let $A$ be a bound on the coordinates of the slopes of curves in $C$, provided by Lemma \ref{lem-finite-slopes}. As $\gamma$ realizes the class $\lambda_2$, we have
$$|K(\lambda_2)|\leqslant \sum_{p\in\gamma\cap\Gamma}|\det(p,u_p)| \leqslant C^2\cdot A\cdot l,$$
where $A$ bounds the coordinates of the slopes $u_p$, and $l$ the coordinates of the intersection points with $\gamma$, and $C^2$ the number of intersection points. As there are only horizontal edges in the far zone, one can choose $\gamma_1$ in the class $\lambda_1$ such that all intersection points lie in the point zone. Let $p=(x(p),y(p))\in [0;x]\times [0;l]$ be an intersection point between such a $\gamma_1$ and $\Gamma$, and let $u_p=(\alpha(p),\beta(p))$ be the slope, with $\beta(p)\geqslant 1$. We have
$$\det(p,u_p)=\beta(p)x(p)-\alpha(p)y(p)<2A x.$$
Furthermore, we have $\det(p,u_p)>\beta(p)x(p)-Al$. Adding over the intersection points, we get that 
$$|K(\lambda_1)|\leqslant C^2\cdot 2Ax\ll L.$$
As $K\in M\subset\Lambda_\RR^*$, which is the image of $S^T=\left(\begin{smallmatrix} L & d_2\varepsilon \\ la/d_2+d_1\varepsilon & l \\ \end{smallmatrix}\right)$, and $L\gg x\gg l$, these two inequalities force $K$ to be of the form $S^T(ke_2^*)$ for some integer $k\ll x$. In particular, we get that
$$K(\lambda_1)=kd_2\varepsilon \ll l.$$
As $\det(p,u_p)>\beta(p)x(p)-Al$, we get that
$$\sum_{p\in\Gamma\cap\gamma_1} \beta(p)x(p)\leqslant C^2\cdot Al+kd_2\varepsilon \ll x_1.$$
Hence, all the non-horizontal edges (and hence all pearls) intersected by $\gamma_1$ lie in the zone with first coordinate between $0$ and $\frac{x_1}{2}$, where there are no marked points. In this situation, there are no pearl labeled by $1,\dots,g-2$, and thus every marked point lies on a horizontal edge. Therefore, if this is not the case, then there is some non-horizontal edge in the far zone, and thus some pearl, which cannot contain a marked point. In any case, we have a unique pearl without label, which we can abstractly label $\infty$, which lies in the far zone or the zero-zone.

\item[(b)(c)] As in the point case: pearls are balanced thanks to the balancing condition and lengths are non-negative (positive if the edges intersect $\gamma$).
\item[(d')] We need to show that the complement of flat vertices is connected with a unique cycle. We know that the complement of marked points on $\Gamma$ is connected and retracts on a genus $2$ subgraph $\Sigma$, with $\iota:H_1(\Sigma,\ZZ)\hookrightarrow H_1(\TT A,\ZZ)$. Furthermore, by (a'), there is a unique pearl without label. Every cycle in a pearl realizes some class proportional to $\lambda_2$. Thus, by injectivity of $\iota$, the pearl labeled $\infty$ cannot be of genus bigger than $2$. Same, no labeled pearl can be of genus bigger than $2$, so that every pearl is thus homeomorphic to a circle. Finally, as the complement of $\P$ in $\Gamma$ has genus $2$, there is a unique cycle in the complement of flat vertices in $\Dfk$, and it completes the cycle in the pearl $\infty$ into a basis of $H_1(\Sigma,\ZZ)$.
\end{enumerate}
The genus and degree statement are proven as in the point case.
\end{proof}

\subsection{From FLS-diagrams to tropical curves}
\label{section from pearl diagrams to tropical curves}

\begin{prop}\label{proposition curves in diagram linear system case}
Let $\Dfk$ be an FLS-diagram of genus $g$ and bidegree $(d_1,d_2)$. Let $\Lambda_\Dfk$ be the index of $\pi_\Dfk\left(H_1(\Dfk-V_\circ)\right)\subset H_1(\RR/\ZZ)$. Let $k_\pfk|d_\pfk$ be a choice of loop data. If $\mathrm{gcd}(w_e,k_\pfk)$ divides $a$, the diagram gives a number of genus $g$ tropical curves in the class $\left(\begin{smallmatrix}
d_1 & a \\
0 & d_2 \\
\end{smallmatrix}\right)$ and fixed linear system passing through the stretched configuration $\P$ equal to
$$\mathrm{gcd}(w_e,k_\pfk)\Lambda_\Dfk \frac{d_{\pfk_\infty}}{k_{\pfk_\infty}}\prod_\pfk k_\pfk^{\val\pfk-1}\prod_{E_\square(\Dfk)}w_e.$$
Moreover, for each one of them, one has
$$\Lambda_\Gamma^\Sigma= \Lambda_\Dfk k_{\pfk_\infty}, \ m_\Gamma=\prod_\pfk \left(\frac{d_\pfk}{k_\pfk}\right)^{\val\pfk}\prod_e w_e^{|\{\pfk\in e\}|} \text{ and } m_\Gamma^\refined=\prod_\pfk \prod_{e\ni\pfk}\left[ \frac{d_\pfk}{k_\pfk}w_e  \right]_- .$$
If $\mathrm{gcd}(w_e,k_\pfk)$ does not divide $a$, there are no curve encoded by the diagram.
\end{prop}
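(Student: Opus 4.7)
The proof will follow the four-step scheme of Proposition \ref{proposition curves in diagram point case}. The main difference is that the pearl $\pfk_\infty$ carries no marked point, so the analogous vertical-position map will produce a $1$-parameter family of potential curves rather than a discrete set; the FLS condition will then be used to cut this family down to a finite count.

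In Step A we set up the same map $\Phi_{\RR/\ZZ}$ between real tori of flag coordinates: its codomain records the edge-monodromy differences, the point-constraint equalities at flat vertices (now only $g-2$ of them), and the Menelaus conditions at every pearl including $\pfk_\infty$. In Step B, the defining relation of FLS-diagrams $|V(\Dfk)| - |E(\Dfk)| + |V_\circ(\Dfk)| = 0$ (unique cycle in the complement of flat vertices) implies $|E| = 2|V_\circ| + |V_\square|$, so the domain and codomain of $\Phi$ now share the common rank $2|E|$. The single relation $\widehat\psi$ still holds, and the computation of Step B from the point case is unchanged: the target lies in $\ker\widehat\psi$ if and only if $\gcd(w_e,k_\pfk) \mid a$. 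The kernel of $\Phi$ is thus $1$-dimensional, spanned by the flow along the unique cycle $\gamma_\Dfk$ of $\Dfk \setminus V_\circ$; geometrically it corresponds to translating vertically in unison the pearls lying on $\gamma_\Dfk$, which preserves the Menelaus conditions thanks to the horizontal balancing $\sum_{e \ni \pfk}\pm w_e=0$ at each such pearl.

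Step C imposes the linear system condition via the criterion of Section \ref{sec-curves-in-FLS} and is the main technical hurdle. Along the $1$-parameter family, the class $K_{p_0} \in \Lambda^*_\RR$ comparing $\Gamma$ to a fixed reference curve $\Gamma_0$ of the same degree varies linearly in the translation parameter, and imposing $K_{p_0} \in M \subset \Lambda^*_\RR$ picks out a discrete set of admissible translations. The plan is to repackage this into an extended lattice map $\Phi'$ whose target has one additional $\ZZ$-coordinate recording $K_{p_0}$ in a suitable basis of $M$; the domain and codomain of $\Phi'$ then differ by one in rank, exactly as in the point case, and the same alternating leaf-and-lonely-flag pruning argument applies to compute its lattice index. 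The cycle $\gamma_\Dfk$ has horizontal winding $\Lambda_\Dfk$ and the $\infty$ pearl contributes its degree $d_{\pfk_\infty}$, together producing the extra factor $\Lambda_\Dfk \cdot d_{\pfk_\infty}$ in the index; dividing by the $k_{\pfk_\infty}$ deck transformations at $\pfk_\infty$ (which are no longer compensated, as they were in the point case, by a $k_{\pfk_\infty}$-fold choice of lift of a marked point) yields the announced factor $\Lambda_\Dfk\cdot d_{\pfk_\infty}/k_{\pfk_\infty}$.

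Step D is routine. The vertex multiplicities $m_V=(d_\pfk/k_\pfk)w_e$ are computed as in the point case, giving the stated expressions for $m_\Gamma$ and $m_\Gamma^\refined$. The index $\Lambda_\Gamma^\Sigma$ is the determinant in $\Lambda = H_1(\TT A,\ZZ)$ of a basis of $H_1(\Sigma,\ZZ)$: take the cycle realized by $\pfk_\infty$, which is of class $k_{\pfk_\infty}\lambda_2$, and the cycle in $\Gamma$ lifting $\gamma_\Dfk$, which is of class $\Lambda_\Dfk\lambda_1 + n\lambda_2$ for some integer $n$. Their determinant equals $\Lambda_\Dfk\cdot k_{\pfk_\infty}$, as claimed.
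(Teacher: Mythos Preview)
Your four-step scheme matches the paper's, and Steps B and D are essentially correct. The gap is in Step C, in how the factor $d_{\pfk_\infty}$ is obtained.

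The linear-system condition $K_{p_0}\in M\subset\Lambda^*_\RR$ is two-dimensional: it constrains both $K_{p_0}(\lambda_1)$ and $K_{p_0}(\lambda_2)$. Your extended map $\Phi'$, like $\Phi$, has only \emph{vertical} flag coordinates as domain variables; the single extra coordinate you propose to add can only record the condition coming from the vertical loop $\gamma$, namely $K_{p_0}(\lambda_2)$. This is indeed what produces the factor $\Lambda_\Dfk$ in the lattice index, via an ``open the cycle'' step in the pruning. But $K_{p_0}(\lambda_1)$ depends on the \emph{horizontal} position of the pearl $\pfk_\infty$, and that horizontal position is a genuinely new continuous parameter which appears nowhere in the domain of $\Phi$ or of your $\Phi'$: since $\pfk_\infty$ carries no marked point, nothing pins it down in the horizontal direction, and the horizontal edges adjacent to it can absorb any translation by adjusting their lengths. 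In the paper's proof this is handled outside the lattice map: translating $\pfk_\infty$ horizontally by $t$ shifts $K_{p_0}(\lambda_1)$ by $d_{\pfk_\infty}\,t$, and the integrality constraint then cuts out exactly $d_{\pfk_\infty}$ admissible positions. That is the actual source of the $d_{\pfk_\infty}$ factor---it is not part of the index of any map built from vertical flag data.

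So your sentence ``the $\infty$ pearl contributes its degree $d_{\pfk_\infty}$ \dots together producing the extra factor $\Lambda_\Dfk\cdot d_{\pfk_\infty}$ in the index'' conflates two independent mechanisms: $\Lambda_\Dfk$ is in the index, $d_{\pfk_\infty}$ is not. Once this is separated out, your argument and the paper's coincide.

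A minor remark on Step B: the one-dimensional kernel is correctly located on $\gamma_\Dfk$, but describing it as ``translating vertically in unison the pearls'' is misleading, since pearls wrap the vertical direction and cannot be rigidly translated in it. The deformation is a coherent shift of the vertical heights of the horizontal edges lying on $\gamma_\Dfk$, which reshapes the adjacent pearls rather than moving them.
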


\begin{proof}
We proceed as in the point case. For each pearl $\pfk$ of degree $d_\pfk$, choose $k_\pfk |d_\pfk$, the number of rounds that the cycle in the pearl does.

\medskip

\textbf{Step A: introduction of a lattice map.} The tropical curves encoded by $\Dfk$ are determined by the following map between real tori, playing the analog of $\Phi_{\RR/\ZZ}$ in the point case:
$$\Psi_{\RR/\ZZ}:\left| \begin{array}{>{\displaystyle}cc>{\displaystyle}l}
  \bigoplus_{E(\Dfk)} \RR/k_{e_-}l\ZZ\oplus\RR/k_{e_+}l\ZZ & \longrightarrow & \RR/l\ZZ\oplus\bigoplus_{E(\Dfk)} \RR/l\ZZ \oplus \bigoplus_{V_\circ(\Dfk)} (\RR/l\ZZ)^2 \oplus \bigoplus_{V_\square(\Dfk)} \RR/ k_\pfk l\ZZ \\
 (x_e^-,x_e^+)_e & \longmapsto & \left( \sum_{e\cap\gamma\neq\emptyset} \ell_e w_e x_e^+ , (x_e^+-x_e^-)_e, (x_{e_v^-}^-,x_{e_v^+}^+)_v , \left( \sum_{e\ni\pfk} \pm w_ex_e^\pm \right)_\pfk \right) \\
\end{array} \right. .$$
As before, the domain corresponds to the position of flags in the degree $k_\pfk$ cover associated to adjacent pearl (or $\RR/l\ZZ$ for a flat vertex). The codomain corresponds to gluing of flags, marked point evaluation, Menelaus relations for each pearl, and a new condition coming from the fixed linear system condition. The latter is obtained as follows. The linear system condition imposes two constraints:
\begin{itemize}
\item The first is on the sum of the $\det(p,u_p)$ of the edges intersected by a loop going in the horizontal direction. It says that the unmarked pearl $\pfk_\infty$ has $d_{\pfk_\infty}$ possible positions.
\item The second is on the sum of the moments of the edges intersected by a loop going in the vertical direction, which we can assume to be $\gamma$. As the curves have a pearl decomposition, we can assume that this loop only intersects horizontal edges. For each intersected edge, we choose one of the adjacent pearls to measure its position, and we then make the sum of the moments, yielding
$$\sum_{e\cap\gamma\neq\emptyset} \ell_e w_e x_e^+.$$
\end{itemize}

The map $\Psi_{\RR/\ZZ}$ is obtained from a map between lattices $\Psi$ whose codomain has rank exactly one more as the rank of its domain. We want to determine the number of preimages of a point $\mathbf{y}=\left(0, (\ell_e d_2\varepsilon)_e,(y_v,y_v)_v,\left(d_\pfk\left(\frac{a}{d_2}l+d_1\varepsilon\right)\right)_\pfk \right)$.

\textbf{Step B: Image of the map $\Psi_{\RR/\ZZ}$ between tori.} As in the case of curves passing through $g$ points, the image of $\Psi_{\RR/\ZZ}$ is the kernel of the morphism $\widehat{\psi}_{\RR/\ZZ}$, and we get that $\mathbf{y}$ is in the image of $\Psi_{\RR/\ZZ}$ if and only if $\mathrm{gcd}(w_e,k_\pfk)$ divides $a$.

\textbf{Step C: degree of $\Psi_{\RR/\ZZ}$.} We now compute the number of tropical curves encoded in $\Dfk$ for a fixed choice of loop data. Notice that the unmarked pearl $\pfk_\infty$ has $d_{\pfk_\infty}$ possible positions because of the linear system condition. Up to multiplication by $d_{\pfk_\infty}$, the number of tropical curves is given by the gcd of the minors of the lattice map $\Psi$ associated to $\Psi_{\RR/\ZZ}$:
$$\Psi: \left| \begin{array}{>{\displaystyle}cc>{\displaystyle}l}
 \bigoplus_{E(\Dfk)} \ZZ\oplus\ZZ & \longrightarrow & \ZZ\oplus \ZZ^{E(\Dfk)} \oplus \ZZ^{2V_\circ(\Dfk)} \oplus \ZZ^{V_\square(\Dfk)}\\
 (x_e^-,x_e^+)_e & \longmapsto & \left( \sum_{e\cap\gamma\neq\emptyset} \ell_e w_e k_{e_+}x_{e_+} ,  (k_{e_+}x_e^+ - k_{e_-}x_e^-), (x_{e_v^-}^-,x_{e_v^+}^+)_v , \left( \sum_{e\ni\pfk} \pm w_ex_e^\pm \right)_\pfk \right) \\
\end{array} \right. .$$
The computation of the index is as in the point case. First, notice that the minor corresponding to deleting the first row is $0$, since $\psi$ still vanishes on the image. Before starting, we do some little modifications to simplify the computation of the minors. The first row of $\Psi$ comes from the edges $e$ with $e\cap\gamma\neq \emptyset$. An edge in $\Dfk$ having length $\ell_e$ has $\ell_e$ intersection points with $\gamma$, hence the factor $\ell_e$.

Let $\widetilde{\Dfk}$ be the genus $1$ graph where the bivalent vertices have been separated, and each remaining edge has been subdivided. The first row of $\Psi$ may be seen as a 1-cochain on $\widetilde{\Dfk}$, \textit{i.e.} an element in $C^1(\Dfk,\ZZ)$, associating to every edge the number of intersection points it has with $\gamma$. Furthermore, the remaining rows of $\Psi$ are actually indexed by the vertices of $\widetilde{\Dfk}$: two per flat vertex, one per edge and the pearls. The rows itself are precisely the coboudaries by which we quotient to get $H^1(\Dfk,\ZZ)$ from $C^1(\Dfk,\ZZ)$, up to the weights $k_\pfk$ along each edge. Therefore, as $\Dfk-V_\circ$ contains a unique cycle, by adding a suitable linear combination of the remaining rows, we may assume that there is a unique non-zero coefficient in the first row, corresponding to a flag belonging to the cycle of $\Dfk-V_\circ$. This amounts to deform $\gamma$ so that it intersects the cycle at a unique edge, and the edges adjacent to bivalent vertices. Furthermore, as the intersection number with the cycle is preserved, the coefficient at the column corresponding to the flag is $\Lambda_\Dfk w_ek_{e_+}$. These are the only coordinates having a non-zero coefficient in the first row.

The computation of the minors is now carried out through the pruning algorithm, with a new step.
\begin{enumerate}[label=(\arabic*)]
\item \textbf{Pruning of leafs:} expand minors with respect to rows corresponding to flags adjacent to a unique vertex.
\item \textbf{Pruning of lonely flags:} expand minors with respect to edgesfor which only one flag remains.
\item \textbf{Opening the cycle:} The first two steps allow one to cut branches of the complement of flat vertices. However, the complement has one cycle, so that the pruning algorithm does not terminate. Once every branches has been pruned, there is only one non-zero coefficient in the first row, corresponding to a unique flag. We expand with respect to this coefficient, and delete the corresponding flag. The cycle is now open, and we can continue with steps (1) and (2) to prune it and finish the computation.
\end{enumerate}

We get the following minors:
\begin{itemize}
\item For a pearl $\pfk_0$ we get
$$k_{\pfk_0}^{\val\pfk_0}\Lambda_\Dfk\prod_{\pfk\neq\pfk_0}k_\pfk^{\val\pfk-1}\prod_{e\in E_\square(\Dfk)}w_e.$$
\item For an edge $e_0\in E_\square(\Dfk)$, we get
$$w_{e_0}^2\Lambda_\Dfk\prod_{\pfk}k_\pfk^{\val\pfk-1}\prod_{\substack{e\in E_\square(\Dfk) \\ e\neq e_0}}w_e.$$
\item For an edge $e_0$adjacent to a flat vertex,
$$w_{e_0}\Lambda_\Dfk\prod_{\pfk}k_\pfk^{\val\pfk-1}\prod_{e\in E_\square(\Dfk)}w_e.$$
\end{itemize}
Taking the gcd yields
$$\gcd(w_e,k_\pfk)\Lambda_\Dfk\prod_{\pfk}k_\pfk^{\val\pfk-1}\prod_{e\in E_\square(\Dfk)}w_e.$$
Finally, this time, we have to divide by the deck transformations of the unmarked pearl $\pfk_\infty$ since there is no marked point to distinguish the lifts that differ from a torsion element.

\textbf{Step D: multiplicities of the tropical curves.} To conclude, we deal with the multiplicities of the tropical curves. As in the case of curves passing through $g$ points, each vertex has multiplicity $\frac{d_\pfk}{k_\pfk}w_e$, yielding the formulas. Moreover, the graph $\Sigma$ on which each curve retracts is provided by the unmarked pearl $\pfk_\infty$, that contains a unique cycle realizing the class $k_{\pfk_\infty}\lambda_2$, and the unique cycle of the complement of marked edges in the diagram $\Dfk$, realizing the class $\Lambda_\Dfk^\mfk$ inside $H_1(S^1,\ZZ)$. We then have $\Lambda_\Gamma^\Sigma=\Lambda_\Dfk k_{\pfk_\infty}$.
\end{proof}

\subsection{Recovering the invariants}

\subsubsection{First diagram multiplicities.} For a diagram $\Dfk$, let $m_a^{FLS}(\Dfk)$ and $M_a^{FLS}(\Dfk)$ be the count of curves in the class $\left(\begin{smallmatrix}
d_1 & a \\
0 & d_2 \\
\end{smallmatrix}\right)$ encoded in $\Dfk$ with multiplicities $\Lambda_\Gamma^\Sigma m_\Gamma$ and $\Lambda_\Gamma^\Sigma m_\Gamma^\refined$ respectively. To simplify notations, for a chosen FLS-diagram $\Dfk$ and loop data $(k_\pfk)$, let us set
\begin{align*}
\xi^{FLS}_\Dfk(k_\pfk) = & \Lambda_\Dfk^2 d_{\pfk_\infty}\prod_\pfk d_\pfk^{\val\pfk-1}\frac{d_\pfk}{k_\pfk}\prod_e w_e^{|\{\pfk\in e\}|+\mathds{1}_\square(e)} ,\\
\Xi^{FLS}_\Dfk(k_\pfk) = & \Lambda_\Dfk^2 d_{\pfk_\infty}\prod_\pfk k_\pfk^{\val\pfk-1}\left(\prod_{e\ni\pfk} \left[\frac{d_\pfk}{k_\pfk}w_e\right]_-\right)\prod_{E_\square(\Dfk)} w_e. .\\
\end{align*}
Following Proposition \ref{proposition curves in diagram linear system case}, we have the following.

\begin{coro}
One has
\begin{align*}
m_a^{FLS}(\Dfk) & =\sum_{\substack{k_\pfk|d_\pfk \\ \mathrm{gcd}(w_e,k_\pfk)|a }}\mathrm{gcd}(k_\pfk,w_e)\xi^{FLS}_\Dfk(k_\pfk), \\
M_a^{FLS}(\Dfk) & =\sum_{\substack{k_\pfk|d_\pfk \\ \mathrm{gcd}(w_e,k_\pfk)|a }}\mathrm{gcd}(k_\pfk,w_e)\Xi^{FLS}_\Dfk(k_\pfk). \\
\end{align*}
For $a=0$, we have no gcd condition, and for $a=1$ we only sum for gcd equal to $1$.
\end{coro}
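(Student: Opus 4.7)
The plan is to mimic the proof of the analogous corollary in the point case: both quantities $m_a^{FLS}(\Dfk)$ and $M_a^{FLS}(\Dfk)$ are defined as sums over tropical curves $\Gamma$ encoded by $\Dfk$ of their (refined) multiplicity, so the strategy is to partition this sum according to the loop data $(k_\pfk)_\pfk$ and apply Proposition \ref{proposition curves in diagram linear system case} inside each part.

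First, I would fix a loop data $(k_\pfk)$ with $k_\pfk | d_\pfk$ for each pearl. By Proposition \ref{proposition curves in diagram linear system case}, there are no curves contributing unless $\gcd(w_e,k_\pfk) | a$; in that case the number of encoded tropical curves is
$$\gcd(w_e,k_\pfk)\,\Lambda_\Dfk\frac{d_{\pfk_\infty}}{k_{\pfk_\infty}}\prod_\pfk k_\pfk^{\val\pfk-1}\prod_{E_\square(\Dfk)}w_e,$$
and all these curves share a common value of $\Lambda_\Gamma^\Sigma m_\Gamma$ (resp.\ $\Lambda_\Gamma^\Sigma m_\Gamma^{\refined}$). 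Multiplying the count by the common multiplicity and summing over $(k_\pfk)$ satisfying the divisibility condition yields the announced formulas, provided we can match the resulting product with $\xi^{FLS}_\Dfk(k_\pfk)$ (resp.\ $\Xi^{FLS}_\Dfk(k_\pfk)$).

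The key computation is the identification of the product. Using $\Lambda_\Gamma^\Sigma=\Lambda_\Dfk k_{\pfk_\infty}$, the factor $k_{\pfk_\infty}$ cancels exactly with the $1/k_{\pfk_\infty}$ appearing in the count of curves, producing $\Lambda_\Dfk^2 d_{\pfk_\infty}$. Then for the classical case, combining
$$\prod_\pfk k_\pfk^{\val\pfk-1}\cdot\prod_\pfk\left(\frac{d_\pfk}{k_\pfk}\right)^{\val\pfk}=\prod_\pfk d_\pfk^{\val\pfk-1}\frac{d_\pfk}{k_\pfk},$$
and $\prod_{E_\square(\Dfk)}w_e\cdot\prod_e w_e^{|\{\pfk\in e\}|}=\prod_e w_e^{|\{\pfk\in e\}|+\mathds{1}_\square(e)}$, one recovers $\xi^{FLS}_\Dfk(k_\pfk)$. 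For the refined case the product of $m_\Gamma^{\refined}$ with the common factor involving $k_\pfk^{\val\pfk-1}$ and $\prod_{E_\square(\Dfk)}w_e$ directly gives $\Xi^{FLS}_\Dfk(k_\pfk)$, with no further manipulation needed.

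The proof is therefore a direct assembly from Proposition \ref{proposition curves in diagram linear system case}; there is no genuine obstacle. The only point worth watching is the bookkeeping cancellation between the $k_{\pfk_\infty}$ coming from $\Lambda_\Gamma^\Sigma$ and the $1/k_{\pfk_\infty}$ arising from dividing by deck transformations of the unmarked pearl (as done in Step C of the proof of Proposition \ref{proposition curves in diagram linear system case}). Once this cancellation is observed, the statement for $a=0$ (no divisibility constraint) and $a=1$ (condition $\gcd(w_e,k_\pfk)=1$) are just the two extreme specializations of the general identity.
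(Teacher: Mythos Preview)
Your proposal is correct and follows exactly the approach the paper intends: the paper merely states that the corollary follows from Proposition~\ref{proposition curves in diagram linear system case}, and your argument spells out precisely this, partitioning by loop data and multiplying the curve count by the common multiplicity. Your explicit observation of the cancellation between the $k_{\pfk_\infty}$ from $\Lambda_\Gamma^\Sigma$ and the $1/k_{\pfk_\infty}$ in the curve count is the one bookkeeping step the paper leaves implicit, and you have handled it correctly.
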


\subsubsection{Auxiliary multiplicities.} We introduce the auxiliary counts:
\begin{align*}
\omega^{FLS}(\Dfk) & =\sum_{k_\pfk|d_\pfk }\xi^{FLS}_\Dfk(k_\pfk), & \Omega^{FLS}(\Dfk) & =\sum_{k_\pfk|d_\pfk}\Xi^{FLS}_\Dfk(k_\pfk), \\
\omega_k^{FLS}(\Dfk) & =\sum_{\substack{k_\pfk|d_\pfk \\ \mathrm{gcd}(w_e,k_\pfk)=k }}\xi^{FLS}_\Dfk(k_\pfk), & \Omega_k^{FLS}(\Dfk) & =\sum_{\substack{k_\pfk|d_\pfk \\ \mathrm{gcd}(w_e,k_\pfk)=k }}\Xi^{FLS}_\Dfk(k_\pfk).\\
\end{align*}

Only the formulas $\omega^{FLS}_1=m^{FLS}_1$ and $\Omega^{FLS}_1=M^{FLS}_1$ have an enumerative interpretation as a count of curves, the other are just computation additives.

\begin{prop}
Factorizing the products, we have the following expressions:
\begin{align*}
\omega^{FLS}(\Dfk)  &= \Lambda_\Dfk^2 d_{\pfk_\infty}\prod_\pfk d_\pfk^{\val\pfk-1}\sigma_1(d_\pfk)\prod_e w_e^{|\{\pfk\in e\}|+\mathds{1}_\square(e)}  ,\\
 \Omega^{FLS}(\Dfk) & = \Lambda_\Dfk^2 d_{\pfk_\infty}\prod_\pfk \left(\sum_{k|d_\pfk} k^{\val\pfk-1}\prod_{e\ni\pfk}\left[\frac{d_\pfk}{k}w_e\right]_-\right)\prod_{E_\square(\Dfk)} w_e ,\\
\end{align*}
along with the following identities:
\begin{align*}
\omega^{FLS}_k(\Dfk) & = k^{4g-3}\omega_1(\Dfk/k) , & \Omega^{FLS}_k(\Dfk) & = k^{2g-1}\Omega^{FLS}_1(\Dfk/k)(q^{k}) , \\
\omega^{FLS} & = \epsilon_{4g-3}\ast\omega^{FLS}_1 , & \Omega^{FLS} & = \epsilon_{2g-1}\ast\Omega^{FLS}_1 ,\\
m_0^{FLS} & = \epsilon_{4g-2}\ast\omega^{FLS}_1, & M_0^{FLS} & = \epsilon_{2g}\ast\Omega^{FLS}_1 ,\\
\end{align*}

where the last two rows are convolutions on the set of functions on diagrams.
\end{prop}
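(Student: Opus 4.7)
The plan is to mirror, step by step, the proof of the analogous proposition in the punctual case, the only genuine difference being a single combinatorial identity on the underlying graph of an FLS-diagram.

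First, I would establish the closed product formulas for $\omega^{FLS}$ and $\Omega^{FLS}$. The prefactor $\Lambda_\Dfk^2\, d_{\pfk_\infty}$ is independent of the loop data $(k_\pfk)$, and both $\xi^{FLS}_\Dfk(k_\pfk)$ and $\Xi^{FLS}_\Dfk(k_\pfk)$ factor as products over pearls of quantities depending only on the single $k_\pfk$ attached to that pearl. The sum $\sum_{k_\pfk \mid d_\pfk}$ therefore splits into independent sums $\sum_{k \mid d_\pfk}$ over each pearl, producing in the classical case the factor $\sum_{k\mid d_\pfk} d_\pfk/k = \sigma_1(d_\pfk)$ at each pearl, as announced.

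Second, I would prove the homogeneity relations $\omega^{FLS}_k(\Dfk) = k^{4g-3}\omega^{FLS}_1(\Dfk/k)$ and $\Omega^{FLS}_k(\Dfk) = k^{2g-1}\Omega^{FLS}_1(\Dfk/k)(q^k)$ by writing $w_e = k\widehat{w_e}$, $d_\pfk = k\widehat{d_\pfk}$, $k_\pfk = k\widehat{k_\pfk}$ when $\gcd(w_e,k_\pfk)=k$, and counting the power of $k$ that factors out. Note that $\Lambda_\Dfk$ is invariant under the $\NN^*$-action since the lengths $\ell_e$ are preserved, while $d_{\pfk_\infty}$ contributes one extra factor of $k$ compared with the punctual setting. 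The bookkeeping of all remaining factors rests on the identities
\begin{align*}
\sum_\pfk \val\pfk &= \sum_e |\{\pfk\in e\}| = 2g-2, \\
|E_\square(\Dfk)| - |V_\square(\Dfk)| &= 0,
\end{align*}
where the second identity is the decisive novelty in the FLS setting: the complement of flat vertices carries a unique cycle rather than being a tree, so its Euler characteristic is $0$ instead of $1$. These follow from the relations (a')--(d') together with the bivalence of flat vertices and $b_1(\Dfk)+|V_\square(\Dfk)|=g$, which give $|V(\Dfk)|=g-1$. The shift $+2$ in the exponents compared to the punctual case (hence $4g-3$ instead of $4g-5$, and $2g-1$ instead of $2g-3$) comes from combining the extra factor of $k$ from $d_{\pfk_\infty}$ with the change in $|E_\square|-|V_\square|$. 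The substitution $q\mapsto q^k$ on the refined side comes from the $2g-2$ factors $[\,\cdot\,]_-$ in $\Xi^{FLS}_\Dfk$, each absorbing one factor of $k$ via $[k\alpha]_- = [\alpha]_-(q^k)$.

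The four convolution identities then follow by splitting the sums defining $\omega^{FLS}$, $m_0^{FLS}$, $\Omega^{FLS}$, $M_0^{FLS}$ according to the value of $\gcd(w_e,k_\pfk)$, inserting the homogeneity relations, and recognizing the resulting expressions as arithmetic convolutions in the sense of Section~\ref{section convolution}. The extra weight $\gcd(w_e,k_\pfk) = k$ in the definitions of $m_0^{FLS}$ and $M_0^{FLS}$ (cf.\ Proposition~\ref{proposition curves in diagram linear system case}) bumps the exponents by one, producing $\epsilon_{4g-2}$ and $\epsilon_{2g}$ respectively. The only step requiring real care is the combinatorial exponent count in the second paragraph; everything else is a mechanical translation of the punctual argument.
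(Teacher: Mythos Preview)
Your proposal is correct and takes exactly the same approach as the paper, which simply says the proof is verbatim to the point case with different exponents coming from the shape of the multiplicity and the fact that the complement of flat vertices has genus~$1$. You have filled in precisely those details: the extra $k$ from $d_{\pfk_\infty}$ and the shift $|E_\square|-|V_\square|=0$ (rather than $-1$) each contribute $+1$, accounting for the $+2$ shift in every exponent.
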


\begin{proof}
Proof is verbatim to the point case, with different exponent appearing due to the different shape of the multiplicity, and the fact that the complement of flat vertices has genus $1$.
\end{proof}

\subsubsection{Final multiplicities.} Using the homogeneity of the curve multiplicities, we have the following relations between functions over the set of classes:
\begin{align*}
M^{FLS}_g=\epsilon_{4g-2}\ast N^{\prim,FLS}_g, & & N^{FLS}_g=\epsilon_{4g-1}\ast N^{\prim,FLS}_g.\\
\end{align*}
The above equalities come from a bijection at the level of the tropical curves and can be passed to the pearl diagram level and have refined analogs. Let $\mu^{FLS}$ (resp. $\Upsilon^{FLS}$) be the count of curves in the diagram $\Dfk$ using multiplicity $\delta_\Gamma\Lambda_\Gamma^\Sigma m_\Gamma$ (resp. $\Lambda_\Gamma^\Sigma M_\Gamma$). Similarly, let $\mu^{FLS}_1$ (resp. $\Upsilon^{FLS}_1$) be the count of \textit{primitive} curves in the diagram $\Dfk$ using the same multiplicity.

\begin{prop}
We have the following identities:
$$\begin{array}{rlcr>{\displaystyle}l}
m^{FLS}_0 & =\epsilon_{4g-2}\ast \mu^{FLS}_1, & & M^{FLS}_0(\Dfk) & = \sum_{k|\Dfk} k^2\Upsilon^{FLS}_1(\Dfk/k)(q^{k^2}), \\
\mu^{FLS}(\Dfk) & =\epsilon_{4g-1}\ast \mu^{FLS}_1(\Dfk), & & \Upsilon^{FLS} & = (\epsilon_{2g}\varphi)\ast M_0^{FLS}.\\
\end{array}$$
\end{prop}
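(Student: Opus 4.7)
The proof proceeds verbatim as in the punctual case (Proposition \ref{proposition final multiplicities point case}), the only additional ingredient being the homogeneity of $\Lambda_\Gamma^\Sigma$ under the $\NN^*$-action on tropical curves. The plan is to first establish that if $\Gamma=k\cdot\Gamma'$ with $\Gamma'$ primitive, then $\Lambda_\Gamma^\Sigma=k^2\Lambda_{\Gamma'}^\Sigma$: the subgraph $\Sigma$ on which $\Gamma\setminus\P$ retracts is unchanged as a set, but the slopes of its two generating loops are both scaled by $k$, and the intersection form on $H_1(\TT A,\ZZ)\simeq\Lambda$ is bilinear. Combined with the already known scalings $m_\Gamma=k^{4g-4}m_{\Gamma'}$ and $m_\Gamma^\refined(q)=m_{\Gamma'}^\refined(q^{k^2})$, this yields the two composite scalings $\Lambda_\Gamma^\Sigma m_\Gamma=k^{4g-2}\Lambda_{\Gamma'}^\Sigma m_{\Gamma'}$ and $\Lambda_\Gamma^\Sigma m_\Gamma^\refined(q)=k^2\Lambda_{\Gamma'}^\Sigma m_{\Gamma'}^\refined(q^{k^2})$ that we shall use throughout.

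With these scalings in hand, for any FLS-diagram $\Dfk$ I would partition $\CCC(\Dfk)=\bigsqcup_{k|\Dfk}\CCC_k(\Dfk)$, using the bijection $\CCC_k(\Dfk)\simeq\CCC_1(\Dfk/k)$ obtained by dividing all edge weights and pearl degrees by $k$. The first identity then follows from
$$m_0^{FLS}(\Dfk)=\sum_{k|\Dfk}\sum_{\Gamma\in\CCC_k(\Dfk)}\Lambda_\Gamma^\Sigma m_\Gamma=\sum_{k|\Dfk}k^{4g-2}\mu_1^{FLS}(\Dfk/k),$$
and the second identity follows in exactly the same way, with the extra $\delta_\Gamma=k$ providing an extra factor $k$ and hence the exponent $4g-1$. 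The third identity is analogous, using the refined composite scaling and the fact that $\Upsilon_1^{FLS}=M_1^{FLS}$ on primitive diagrams, since $M_\Gamma=m_\Gamma^\refined$ whenever $\delta_\Gamma=1$.

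For the last identity $\Upsilon^{FLS}=(\epsilon_{2g}\varphi)\ast M_0^{FLS}$, the plan is to expand the right-hand side as
$$(\epsilon_{2g}\varphi)\ast M_0^{FLS}(\Dfk)=\sum_{k|\Dfk}k^{2g}\varphi(k)\sum_{l|\Dfk/k}l^2\Upsilon_1^{FLS}(\Dfk/(kl))(q^{kl^2}),$$
substitute $\delta=kl$ to regroup this as $\sum_{\delta|\Dfk}\delta^2\sum_{k|\delta}k^{2g-2}\varphi(k)\Upsilon_1^{FLS}(\Dfk/\delta)(q^{\delta^2/k})$, and compare with the direct expansion of $\Upsilon^{FLS}(\Dfk)=\sum_{\Gamma\in\CCC(\Dfk)}\Lambda_\Gamma^\Sigma M_\Gamma$ obtained by inserting the definition $M_\Gamma=\sum_{k|\delta_\Gamma}k^{2g-2}\varphi(k)m_\Gamma^\refined(q^{1/k})$ and partitioning by gcd; the two expressions coincide term by term. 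I do not expect any serious obstacle here: the entire argument is bookkeeping analogous to the final manipulation in the proof of Proposition \ref{proposition final multiplicities point case}, and the only mildly subtle point is the scaling law $\Lambda_\Gamma^\Sigma=k^2\Lambda_{\Gamma'}^\Sigma$, which is geometrically transparent.
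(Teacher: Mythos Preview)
Your overall strategy matches the paper's (``verbatim to Proposition \ref{proposition final multiplicities point case}''), but the key new ingredient you invoke is wrong, and this is a genuine gap.

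The claimed scaling $\Lambda_\Gamma^\Sigma=k^2\Lambda_{\Gamma'}^\Sigma$ for $\Gamma=k\cdot\Gamma'$ is false. Recall that $\Lambda_\Gamma^\Sigma$ is the index of $h_*H_1(\Sigma,\ZZ)$ inside $H_1(\TT A,\ZZ)\simeq\Lambda$. Multiplying a tropical curve by $k$ rescales the \emph{slopes of the edges}, but the image $h(\Gamma)\subset\TT A$ is unchanged; hence the homology classes in $\Lambda$ realized by loops of $\Sigma$ are unchanged, and $\Lambda_\Gamma^\Sigma=\Lambda_{\Gamma'}^\Sigma$. (This is also visible from the diagram formula $\Lambda_\Gamma^\Sigma=\Lambda_\Dfk k_{\pfk_\infty}$ of Proposition \ref{proposition curves in diagram linear system case}: neither factor depends on the weights $w_e$.) Your justification confuses slopes of edges with homology classes of loops.

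The missing factor $k^2$ has a different origin, and this is exactly where the FLS case is \emph{not} verbatim. The map ``divide by $k$'' is \emph{not} a bijection $\CCC_k(\Dfk)\to\CCC_1(\Dfk/k)$: a curve $\Gamma\in\CCC_k(\Dfk)$ lies in the fixed system $\L$, but $\Gamma/k$ lies in one of the $k^2$ systems $\L_0$ with $\L_0^{\otimes k}=\L$ (torsion of the tropical Picard group, as used in the earlier proposition relating $N^{FLS}_{g,C,k}$ and $N^{FLS}_{g,C/k,1}$). Since the counting formula of Proposition \ref{proposition curves in diagram linear system case} is independent of which linear system is fixed, each of the $k^2$ systems contributes equally, and one obtains
\[
\sum_{\Gamma\in\CCC_k(\Dfk)}\Lambda_\Gamma^\Sigma m_\Gamma
= k^{4g-4}\cdot k^2\cdot\mu_1^{FLS}(\Dfk/k),
\]
with the analogous correction in the refined case. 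After this fix, your derivations of all four identities go through as written.
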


\begin{proof}
The proof is verbatim to Proposition \ref{proposition final multiplicities point case}.
\end{proof}

Making the sum over all the genus $g$ bidegree $(d_1,d_2)$ pearl diagrams, we get the following.

\begin{coro}
One has
$$\begin{array}{rlcr>{\displaystyle}l}
M_g^{FLS} & =\epsilon_{4g-2}\ast N^{\prim,FLS}_g, & & BG_{g,C}^{FLS} & =\sum_{k|C}BG_{g,C/k,1}(q^{k^2}) , \\
N_g^{FLS} & =\epsilon_{4g-1}\ast N^{\prim,FLS}_g, & & R_{g,C}^{FLS} & =(\epsilon_{2g}\varphi)\ast BG_{g,C} . \\
\end{array}$$
\end{coro}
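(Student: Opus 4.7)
The plan is to obtain the four identities by summing the corresponding diagram-level equalities of the preceding proposition over all genus $g$ FLS pearl diagrams of bidegree $(d_1,d_2)$. The bridge from diagrams to invariants is provided by Proposition \ref{proposition curves in diagram linear system case}: specializing to $a=0$, every tropical curve of genus $g$ in the class $C=\left(\begin{smallmatrix}d_1 & 0 \\ 0 & d_2\end{smallmatrix}\right)$ passing through the doubly stretched configuration $\P$ and lying in the prescribed linear system is encoded by a unique FLS pearl diagram of bidegree $(d_1,d_2)$ together with a choice of loop data $(k_\pfk)$. Consequently, counting the solutions weighted respectively by $\Lambda_\Gamma^\Sigma m_\Gamma$, $\delta_\Gamma \Lambda_\Gamma^\Sigma m_\Gamma$, $\Lambda_\Gamma^\Sigma m_\Gamma^\refined$ and $\Lambda_\Gamma^\Sigma M_\Gamma$ amounts to summing $m_0^{FLS}(\Dfk)$, $\mu^{FLS}(\Dfk)$, $M_0^{FLS}(\Dfk)$ and $\Upsilon^{FLS}(\Dfk)$ respectively over such diagrams; restriction to primitive curves yields the analogous identifications $\sum_\Dfk \mu_1^{FLS}(\Dfk)=N^{\prim,FLS}_{g,C}$ and $\sum_\Dfk \Upsilon_1^{FLS}(\Dfk)=BG^{\prim,FLS}_{g,C}$.

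Next I would pass the convolution identities from diagrams to classes. Because $C$ is diagonal, divisibility of $C$ by $k$ coincides with divisibility of both entries $d_1,d_2$ by $k$, in which case $C/k=\left(\begin{smallmatrix}d_1/k & 0 \\ 0 & d_2/k\end{smallmatrix}\right)$. On the diagram side, the $\NN^*$-action from Section \ref{section convolution} satisfies that a diagram of bidegree $(d_1,d_2)$ is divisible by $k$ iff every edge weight and every pearl degree is divisible by $k$, and the map $\Dfk\mapsto\Dfk/k$ identifies the set of $k$-divisible diagrams of bidegree $(d_1,d_2)$ with the set of all diagrams of bidegree $(d_1/k,d_2/k)$. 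Swapping summation orders in $\sum_\Dfk (\epsilon_\alpha\ast f)(\Dfk)=\sum_{k\mid C} k^\alpha \sum_{\Dfk'} f(\Dfk')$ therefore converts the diagram-level convolution into the class-level convolution.

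Applying this procedure to each of the four equalities $m_0^{FLS}=\epsilon_{4g-2}\ast\mu_1^{FLS}$, $\mu^{FLS}=\epsilon_{4g-1}\ast\mu_1^{FLS}$, $M_0^{FLS}(\Dfk)=\sum_{k\mid\Dfk} k^2 \Upsilon_1^{FLS}(\Dfk/k)(q^{k^2})$ and $\Upsilon^{FLS}=(\epsilon_{2g}\varphi)\ast M_0^{FLS}$ from the preceding proposition yields the four stated formulas. The refined cases go through without additional work because the substitutions $q\mapsto q^k$ and $q\mapsto q^{k^2}$ act summand-by-summand and commute with the summation over diagrams.

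There is no genuine obstacle in the argument: once the correspondence of Proposition \ref{proposition curves in diagram linear system case} and the homogeneity relations of the preceding proposition are in hand, the derivation is pure bookkeeping parallel to the punctual case treated earlier. The only mildly delicate point I would highlight in the writeup is that summing $\mu_1^{FLS}$ (resp.\ $\Upsilon_1^{FLS}$) over all diagrams of bidegree $(d_1,d_2)$ really reproduces $N^{\prim,FLS}_{g,C}$ (resp.\ $BG^{\prim,FLS}_{g,C}$), which requires checking that primitivity of a tropical curve encoded by $\Dfk$ with loop data $(k_\pfk)$ is equivalent to $\gcd(w_e,k_\pfk)=1$ and $\gcd(k_\pfk,d_\pfk)=1$ simultaneously; this is immediate from the reconstruction procedure of Proposition \ref{proposition curves in diagram linear system case}.
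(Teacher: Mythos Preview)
Your proposal is correct and follows the same approach as the paper: the corollary is obtained by summing the diagram-level identities of the preceding proposition over all genus $g$ bidegree $(d_1,d_2)$ FLS-diagrams, using that the $\NN^*$-action on diagrams matches the one on classes so that the convolution passes through the sum. The paper's own justification is the single sentence ``Making the sum over all the genus $g$ bidegree $(d_1,d_2)$ pearl diagrams, we get the following,'' and your write-up simply unpacks this.

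One small inaccuracy in your closing remark: your proposed characterization of primitivity, ``$\gcd(w_e,k_\pfk)=1$ and $\gcd(k_\pfk,d_\pfk)=1$,'' cannot be right. Since $k_\pfk\mid d_\pfk$ by construction, the second condition forces $k_\pfk=1$ for every pearl, which is strictly stronger than primitivity of the encoded curve (a pearl of degree $d_\pfk$ making $k_\pfk=d_\pfk$ turns contributes edges of weight $1$ and is perfectly compatible with primitivity). Fortunately this aside is unnecessary: the identification $\sum_\Dfk \mu_1^{FLS}(\Dfk)=N^{\prim,FLS}_{g,C}$ holds directly by the \emph{definition} of $\mu_1^{FLS}$ as the count of primitive curves encoded in $\Dfk$, and no gcd criterion needs to be invoked. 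You can simply drop that last sentence.
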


\begin{rem}
As in the $g$ point case, the surprising identity $\mu_1^{FLS}=m_1^{FLS}(=\omega_1^{FLS})$ that asserts that the number of primitive curves solution of the problem in a class $C$ does not depend on the divisibility of the class: it is equal to the number for a primitive class.
\end{rem}

\begin{theo}
\label{theorem diagram enumeration linear system} We have the following table giving a correspondence between the multiplicity with which we count the FLS-diagrams of genus $g$ and bidegree $(d_1,d_2)$, and the invariants that their count give:
\begin{center}
\begin{tabular}{cc}
\begin{tabular}{c|c}
Multiplicity & Invariant computed \\
\hline
$m^{FLS}_0$ & $M^{FLS}_{g,\left(\begin{smallmatrix}
d_1 & 0 \\
0 & d_2 \\
\end{smallmatrix}\right) }$ \\
$m^{FLS}_1(=\mu_1^{FLS})$ & $M^{FLS}_{g,\left(\begin{smallmatrix}
d_1 & 1 \\
0 & d_2 \\
\end{smallmatrix}\right) }$\\
$m^{FLS}_a$ & $M^{FLS}_{g,\left(\begin{smallmatrix}
d_1 & a \\
0 & d_2 \\
\end{smallmatrix}\right) }$ \\
\hline
$\mu^{FLS}_1(=m_1^{FLS})$ & $N^{FLS}_{g,\left(\begin{smallmatrix}
d_1 & 0 \\
0 & d_2 \\
\end{smallmatrix}\right),1 }$ \\
$\mu^{FLS}$ & $N^{FLS}_{g,\left(\begin{smallmatrix}
d_1 & 0 \\
0 & d_2 \\
\end{smallmatrix}\right) }$ \\
\end{tabular} &
\begin{tabular}{c|c}
Multiplicity & Invariant computed \\
\hline
$M^{FLS}_0$ & $BG^{FLS}_{g,\left(\begin{smallmatrix}
d_1 & 0 \\
0 & d_2 \\
\end{smallmatrix}\right) }$ \\
$M^{FLS}_1$ & $BG^{FLS}_{g,\left(\begin{smallmatrix}
d_1 & 1 \\
0 & d_2 \\
\end{smallmatrix}\right) }$\\
$M^{FLS}_a$ & $BG^{FLS}_{g,\left(\begin{smallmatrix}
d_1 & a \\
0 & d_2 \\
\end{smallmatrix}\right) }$ \\
\hline
$\Upsilon^{FLS}_1$ & $BG^{FLS}_{g,\left(\begin{smallmatrix}
d_1 & 0 \\
0 & d_2 \\
\end{smallmatrix}\right),1 }$ \\
$\Upsilon^{FLS}$ & $R^{FLS}_{g,\left(\begin{smallmatrix}
d_1 & 0 \\
0 & d_2 \\
\end{smallmatrix}\right) }$ \\
\end{tabular}
\end{tabular}
\end{center}
\end{theo}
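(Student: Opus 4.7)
The plan is to observe that Theorem~\ref{theorem diagram enumeration linear system} is a compilation of the identifications built up throughout this subsection; nothing essentially new needs to be proved, and the argument is a row-by-row unpacking of definitions combined with Proposition~\ref{proposition curves in diagram linear system case}. This parallels the treatment of Theorem~\ref{theorem diagram enumeration g points} in the $g$-point case, so I would organise the verification in three tiers.

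First I would treat the ``base'' entries $m^{FLS}_a$ and $M^{FLS}_a$ (including the specializations to $a=0$ and $a=1$). By construction of the FLS-diagram associated to a tropical curve, the set of tropical solutions to the FLS-enumerative problem in class $\left(\begin{smallmatrix} d_1 & a \\ 0 & d_2 \end{smallmatrix}\right)$ partitions according to its underlying FLS-diagram, and for each diagram $\Dfk$ Proposition~\ref{proposition curves in diagram linear system case} computes both the number of tropical curves encoded by any admissible loop data and their common values of $\Lambda_\Gamma^\Sigma$, $m_\Gamma$, and $m_\Gamma^\refined$. The definitions of $m^{FLS}_a(\Dfk)$ and $M^{FLS}_a(\Dfk)$ are precisely the corresponding weighted counts of the curves encoded by $\Dfk$, so summing over all bidegree $(d_1,d_2)$ FLS-diagrams yields $M^{FLS}_{g,\bullet}$ and $BG^{FLS}_{g,\bullet}$ as claimed. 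The gcd condition $\gcd(w_e,k_\pfk)\mid a$ of Proposition~\ref{proposition curves in diagram linear system case} automatically sieves out the non-contributing loop data.

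Next I would handle the $\mu^{FLS}$ and $\Upsilon^{FLS}$ rows, together with the primitive entries $\mu^{FLS}_1$ and $\Upsilon^{FLS}_1$. The key inputs are the identity $\mu^{FLS}_1 = m^{FLS}_1 = \omega^{FLS}_1$, which follows from the homogeneity relation on $\omega^{FLS}_k$ proved above (the primitive count does not see the divisibility of the class), together with the convolution identities $\mu^{FLS} = \epsilon_{4g-1}\ast\mu^{FLS}_1$ and $\Upsilon^{FLS} = (\epsilon_{2g}\varphi)\ast M^{FLS}_0$. Summed over all FLS-diagrams of the prescribed bidegree, these reproduce the invariant-level relations and identify $\mu^{FLS}$ with $N^{FLS}_{g,\bullet}$ and $\Upsilon^{FLS}$ with $R^{FLS}_{g,\bullet}$, while the primitive entries pick out $N^{FLS}_{g,\bullet,1}$ and $BG^{FLS}_{g,\bullet,1}$.

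The only step requiring care is bookkeeping: one must check that the cycle-index factor $\Lambda_\Dfk$ and the unmarked-pearl factor $d_{\pfk_\infty}$ appearing in $\xi^{FLS}_\Dfk$ and $\Xi^{FLS}_\Dfk$ correctly reproduce the relation $\Lambda_\Gamma^\Sigma = \Lambda_\Dfk k_{\pfk_\infty}$ recorded in Proposition~\ref{proposition curves in diagram linear system case}, and that the extra factor $d_{\pfk_\infty}/k_{\pfk_\infty}$ encoding the freedom in the position of the unmarked pearl under the linear-system condition is absorbed correctly into the diagram-level multiplicities. Once these routine verifications are done, every entry of the table is true by construction.
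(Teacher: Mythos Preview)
Your proposal is correct and takes essentially the same approach as the paper: the paper gives no separate proof for this theorem (and for the analogous Theorem~\ref{theorem diagram enumeration g points} writes only ``The theorem follows from the definition of the diagram multiplicities''), so your row-by-row unpacking of the definitions together with Proposition~\ref{proposition curves in diagram linear system case} is exactly the intended argument, just written out in more detail than the paper bothers to.
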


\subsection{The multiple cover formulas}

\subsubsection{Tropical multiple cover formulas.} From the identities between the pearl diagram multiplicities, we deduce the following cover formulas.

\begin{theo}\label{theorem multiple cover formula linear case}
We have the following multiple cover formulas:
$$\begin{array}{r>{\displaystyle}r>{\displaystyle}l}
(i) & M^{FLS}_{g,\left(\begin{smallmatrix}
d_1 & 0 \\
0 & d_2 \\
\end{smallmatrix}\right)} & =\sum_{k|\mathrm{gcd}(d_1,d_2)}k^{4g-2}N^{FLS}_{g,\left(\begin{smallmatrix}
1 & 0 \\
0 & d_1 d_2/k^2 \\
\end{smallmatrix}\right)} ,\\
(ii) & N^{FLS}_{g,\left(\begin{smallmatrix}
d_1 & 0 \\
0 & d_2 \\
\end{smallmatrix}\right)} & =\sum_{k|\mathrm{gcd}(d_1,d_2)}k^{4g-1}N^{FLS}_{g,\left(\begin{smallmatrix}
1 & 0 \\
0 & d_1 d_2/k^2 \\
\end{smallmatrix}\right)} ,\\
(iii) & BG^{FLS}_{g,\left(\begin{smallmatrix}
d_1 & 0 \\
0 & d_2 \\
\end{smallmatrix}\right)} & =\sum_{k|\mathrm{gcd}(d_1,d_2)}k^{2g}BG^{FLS}_{g,\left(\begin{smallmatrix}
1 & 0 \\
0 & d_1 d_2/k^2 \\
\end{smallmatrix}\right)}(q^k) ,\\
(iv) & R^{FLS}_{g,\left(\begin{smallmatrix}
d_1 & 0 \\
0 & d_2 \\
\end{smallmatrix}\right)} & =\sum_{k|\mathrm{gcd}(d_1,d_2)}k^{2g+1}R^{FLS}_{g,\left(\begin{smallmatrix}
1 & 0 \\
0 & d_1 d_2/k^2 \\
\end{smallmatrix}\right)}(q^k).\\
\end{array}$$
\end{theo}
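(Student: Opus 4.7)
The plan is to mirror verbatim the proof of Theorem \ref{theorem multiple cover formulas point}, replacing the point-case exponents $(4g-4, 4g-3, 2g-2, 2g-1)$ by the FLS exponents $(4g-2, 4g-1, 2g, 2g+1)$, which reflects the shift caused by the extra factor $\Lambda_\Gamma^\Sigma$ (equivalently, the fact that the complement of the marked points in an FLS tropical curve retracts on a genus $2$ subgraph rather than a tree). All the bookkeeping has already been done at the level of pearl-diagram multiplicities in the previous subsections; it remains only to sum over diagrams.

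For (i) and (iii), I would apply the identities $m_0^{FLS} = \epsilon_{4g-2}\ast\omega_1^{FLS}$ and $M_0^{FLS} = \epsilon_{2g}\ast\Omega_1^{FLS}$ (both already established in the proposition preceding Theorem \ref{theorem diagram enumeration linear system}), summed over all pearl diagrams of genus $g$ and bidegree $(d_1,d_2)$. Since the $\NN^*$-action on diagrams is compatible with the bidegree (i.e. $k\cdot\Dfk$ has bidegree $(kd_1,kd_2)$ if $\Dfk$ has bidegree $(d_1,d_2)$), the $k$-summand on the right becomes a sum over diagrams of bidegree $(d_1/k,d_2/k)$ only when $k\mid\gcd(d_1,d_2)$. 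Using $\omega_1^{FLS}=m_1^{FLS}$ and $\Omega_1^{FLS}=M_1^{FLS}$, these sums compute $M^{FLS}$ and $BG^{FLS}$ in the class $\left(\begin{smallmatrix} d_1/k & 1 \\ 0 & d_2/k\end{smallmatrix}\right)$, which is equivalent as a polarization to $\left(\begin{smallmatrix} 1 & 0 \\ 0 & d_1d_2/k^2 \end{smallmatrix}\right)$; since this representative is primitive, $M^{FLS}=N^{FLS}$ and $BG^{FLS}$ coincides with its value there. This yields (i) and (iii).

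For (ii), the key input is the coincidence $\mu_1^{FLS}=m_1^{FLS}$ (both equal to $\omega_1^{FLS}$), which asserts that counting \emph{primitive} tropical curves in the class $\left(\begin{smallmatrix} d_1 & 0 \\ 0 & d_2\end{smallmatrix}\right)$ encoded by a given diagram equals counting the (automatically primitive) curves in the class $\left(\begin{smallmatrix} d_1 & 1 \\ 0 & d_2\end{smallmatrix}\right)$ encoded by the same diagram. Summing this identity over all diagrams of bidegree $(d_1,d_2)$, we find $N^{FLS}_{g,(d_1,d_2),1} = N^{FLS}_{g,(1,0;0,d_1d_2)}$. Plugging into $N^{FLS}_g = \epsilon_{4g-1}\ast N^{FLS,\prim}_g$ produces (ii).

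For (iv), I would start from $\Upsilon^{FLS} = (\epsilon_{2g}\varphi)\ast M_0^{FLS}$ and substitute $M_0^{FLS} = \epsilon_{2g}\ast M_1^{FLS}$ to obtain
\[
\Upsilon^{FLS} \;=\; (\epsilon_{2g}\varphi)\ast\epsilon_{2g}\ast M_1^{FLS}.
\]
The only arithmetic computation is then
\[
\bigl((\epsilon_{2g}\varphi)\ast\epsilon_{2g}\bigr)(d) \;=\; \sum_{k\mid d} k^{2g}\varphi(k)(d/k)^{2g} \;=\; d^{2g}\sum_{k\mid d}\varphi(k) \;=\; d^{2g+1},
\]
so $\Upsilon^{FLS} = \epsilon_{2g+1}\ast M_1^{FLS}$. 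Summing over pearl diagrams and using $M_1^{FLS}=BG^{FLS}$ on the primitive class $\left(\begin{smallmatrix} 1 & 0 \\ 0 & d_1d_2/k^2\end{smallmatrix}\right)$ (which in turn coincides with $R^{FLS}$ there, since $\delta_\Gamma=1$ for all solutions) yields (iv). There is no real obstacle: every identity used has been established at the diagram level, and the argument is a formal rewriting of the point-case proof with the new exponents dictated by the FLS genus shift.
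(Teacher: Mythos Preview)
Your proposal is correct and follows essentially the same approach as the paper's own proof: both rely on summing over FLS-diagrams the convolution identities $m_0^{FLS}=\epsilon_{4g-2}\ast\omega_1^{FLS}$, $M_0^{FLS}=\epsilon_{2g}\ast\Omega_1^{FLS}$, $\mu^{FLS}=\epsilon_{4g-1}\ast\mu_1^{FLS}$, and $\Upsilon^{FLS}=(\epsilon_{2g}\varphi)\ast\epsilon_{2g}\ast M_1^{FLS}$, together with the key coincidences $\omega_1^{FLS}=m_1^{FLS}=\mu_1^{FLS}$ and $\Omega_1^{FLS}=M_1^{FLS}$. Your write-up is in fact more detailed than the paper's (which for (iv) simply says ``obtained as in the $g$ points case''), including the explicit computation $(\epsilon_{2g}\varphi)\ast\epsilon_{2g}=\epsilon_{2g+1}$.
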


\begin{proof}
\begin{enumerate}
\item As we have $m_1^{FLS}=\omega_1^{FLS}$, the identity is obtained by making the sum over all diagrams of the identity $m_0^{FLS}=\epsilon_{4g-2}\ast m_1^{FLS}$.
\item As we have $\mu^{FLS}_1=m^{FLS}_1$, which are both equal to $\omega_1^{FLS}$, the relation $\mu^{FLS}=\epsilon_{4g-1}\ast m_1^{FLS}$ provides the result by making the sum over all pearl diagrams yields the relation.
\item As $M_1^{FLS}=\Omega_1^{FLS}$, the identity $M_0=\epsilon_{2g}\ast M_1^{FLS}$ yields the relation by making the sum over all diagrams.
\item The last formula is obtained as in the $g$ points case.
\end{enumerate}
\end{proof}

\subsubsection{Complex multiple cover formulas.} In \cite{bryan1999generating}, it is proven that the linear system condition can be replaced by four insertions of $1$-cycles whose classes form a basis of the homology of $\CC A$. Taking $n=g+2$, $\gamma_1,\dots,\gamma_{g-2}$ to be points and the four remaining cycles to be such a basis, the multiple cover formula predicts
$$\N^{FLS}_{g,\left(\begin{smallmatrix}
d_1 & 0 \\
0 & d_2 \\
\end{smallmatrix}\right)}  =\sum_{k|\mathrm{gcd}(d_1,d_2)}k^{4g-1}\N^{FLS}_{g,\left(\begin{smallmatrix}
1 & 0 \\
0 & d_1 d_2/k^2 \\
\end{smallmatrix}\right)}.$$
By the correspondence theorem asserting that $N_{g,C}^{FLS}=\N_{g,C}^{FLS}$, it is equivalent to relation (ii) in Theorem \ref{theorem multiple cover formula linear case}, which proves the multiple cover formula for this particular case.

We now insert a $\lambda$-class, and take $g_0-2$ point insertions, and four $1$-cycle insertions forming a basis of the first homology group. The multiple cover formula from \cite{oberdieck2022gromov} predicts
$$\langle \lambda_{g-g_0};\mathrm{pt}^{g_0-2}\rangle^{FLS}_{g,C}=\sum_{k|C} k^{2g-1+2g_0}\langle \lambda_{g-g_0};\mathrm{pt}^{g_0-2}\rangle^{FLS}_{g,(C/k)_\mathrm{prim}}.$$
For a fixed choice of $C$ and $g_0$, let
$$\R^{FLS}_{g_0,C}=\sum _{g\geqslant g_0}\langle \lambda_{g-g_0};\mathrm{pt}^{g_0-2}\rangle^{FLS}_{g,C} u^{2g-2},$$
which is the generating series of the Gromov-Witten invariants with point insertions and a $\lambda$-class. Its first term is $N^{FLS}_{g_0,C}u^{2g_0-2}$. As in the point case, the multiple cover formula predicts
\begin{align*}
\R^{FLS}_{g_0,C}(u) & =\sum_{g\geqslant g_0}\sum_{k|C} k^{2g_0+1}\langle \lambda_{g-g_0};\mathrm{pt}^{g_0}\rangle^{FLS}_{g,(C/k)_\mathrm{prim}}(ku)^{2g-2} \\
 & = \sum_{k|C} k^{2g_0+1}\R^{FLS}_{g_0,(C/k)_\mathrm{prim}}(ku). \\
\end{align*}
The generating series $\R^{FLS}_{g_0,C}$ have been computed in \cite{bryan2018curve} for primitive classes and for any class if $g_0=2$. It is possible to check that if $C$ is a primitive class, we have in fact
$$\R^{FLS}_{g_0,C}(u)=(-1)^{g_0-1}R_{g_0,C}(q),$$
where $q=e^{iu}$ is the same change of variable that occurs in \cite{bousseau2019tropical}. The same is true for any $C$ if $g_0=2$. The multiple cover formulas is thus equivalent to the refined tropical multiple cover formula (iv) from Theorem \ref{theorem multiple cover formula linear case}. Thus, the multiple cover formula for invariants with a $\lambda$-class insertion is proven by Theorem \ref{theorem multiple cover formulas point} if the following conjecture analog to the main theorem of \cite{bousseau2019tropical} is true.

\begin{conj}
Through the change of variable $q=e^{iu}$, one has
$$\R^{FLS}_{g_0,C}=(-1)^{g_0-1} R^{FLS}_{g_0,C}(q).$$
\end{conj}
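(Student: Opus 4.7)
My proposal is to mimic the structure used to prove Theorem \ref{theorem multiple cover formula linear case}(iv): reduce the conjecture to the case of a primitive class $C$ by matching multiple cover formulas on the two sides, and then prove the primitive case by an explicit computation using pearl diagrams on one side and the Katz--Klemm--Vafa type formulas of \cite{bryan2018curve} on the other. A cleaner, more intrinsic route would be to directly extend Bousseau's correspondence theorem \cite{bousseau2019tropical} from the toric to the abelian setting; I sketch both.

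\textbf{Reduction to the primitive case.} By Theorem \ref{theorem multiple cover formula linear case}(iv), the tropical generating series satisfies
$$R^{FLS}_{g_0,C}(q)=\sum_{k|C}k^{2g_0+1}R^{FLS}_{g_0,(C/k)_{\prim}}(q^k).$$
On the complex side, Oberdieck's conjectural multiple cover formula with insertion $\alpha=\lambda_{g-g_0}$ gives, after packaging in $u^{2g-2}$,
$$\R^{FLS}_{g_0,C}(u)=\sum_{k|C}k^{2g_0+1}\R^{FLS}_{g_0,(C/k)_{\prim}}(ku).$$
Under $q=e^{iu}$ one has $q^k=e^{i(ku)}$, so the two sums match term-by-term once the primitive case is established. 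This reduction depends on Oberdieck's Conjecture \ref{conj oberdieck formula} for $\lambda$-class insertions, which remains open; so strictly this argument proves the conjecture \emph{modulo} Oberdieck.

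\textbf{Primitive case.} For primitive $C=\left(\begin{smallmatrix}1 & 0 \\ 0 & n\end{smallmatrix}\right)$, both sides are computable in closed form. The series $\R^{FLS}_{g_0,C}(u)$ is accessible through \cite{bryan2018curve}, where it is expressed in terms of coefficients of appropriate quasi-Jacobi forms. For the tropical side, observe that on primitive pearl diagrams the only loop data contributing to $\Omega^{FLS}_1=M^{FLS}_1$ is $k_\pfk=1$, so Proposition \ref{proposition curves in diagram linear system case} reconstructs tropical curves with a single unambiguous choice. The count therefore collapses to a generating function structurally parallel to the Bryan--Leung primitive formula, and matching the $u$-expansion with the $q$-expansion under $q=e^{iu}$ should yield the equality. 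The paper already notes agreement for $g_0=2$ and for primitive $C$ at any $g_0$; this step proposes turning that observation into a formal verification for all $g_0$ in the primitive regime.

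\textbf{The main obstacle.} The hardest part is removing the dependence on Oberdieck's conjecture, or more ambitiously, producing a direct abelian analogue of \cite{bousseau2019tropical}. In Bousseau's argument, the refined Block--G\"ottsche vertex arises as a local log Gromov--Witten contribution computed from a scattering diagram; extending this to abelian surfaces requires (a) setting up log Gromov--Witten invariants with $\lambda$-class insertions on a degenerate abelian surface, (b) identifying the correct refined vertex as the contribution of a neighborhood of a trivalent tropical vertex, and (c) absorbing the extra factor $\Lambda_\Gamma^\Sigma$ and the $\varphi(k)k^{2g_0-2}$ aggregation of $k$-fold covers coming from non-primitive edges. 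The crucial conceptual input is that the \emph{right} refined multiplicity on the tropical side is $M_\Gamma$, not the naive $\delta_\Gamma m_\Gamma^\refined$ used in \cite{blomme2022abelian1,blomme2022abelian2}: the weighting $\sum_{k|\delta_\Gamma}k^{2g_0-2}\varphi(k)(\cdots)(q^{1/k})$ is precisely the sum one expects when summing torus-localised contributions of covers of primitive edges. Making this precise, and in particular handling the cycle contribution $\Lambda_\Gamma^\Sigma$ which has no direct toric analogue, is where I expect the technical core of a complete proof to lie.
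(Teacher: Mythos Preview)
The statement you are attempting to prove is explicitly labeled a \emph{conjecture} in the paper and is not proven there; the paper offers no proof to compare against. It presents the statement as an abelian analogue of Bousseau's correspondence theorem and uses it only to explain why the tropical multiple cover formula (Theorem \ref{theorem multiple cover formula linear case}(iv)) would imply Oberdieck's multiple cover formula for $\lambda$-class insertions, conditional on the conjecture.

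Your proposal is therefore not a proof but a reasonable roadmap, and you are candid about this. A few specific comments. First, your reduction step via Oberdieck's formula is logically sound but, as you note, conditional; in fact the paper observes the reverse implication (conjecture $\Rightarrow$ Oberdieck for $\lambda$-classes), so together with the verified primitive case your argument shows the conjecture is \emph{equivalent} to that instance of Oberdieck's formula rather than a consequence of it. Second, there is a small error in your primitive-case discussion: for bidegree $(1,n)$ FLS-diagrams all edge weights are $1$, so $\gcd(w_e,k_\pfk)=1$ automatically and \emph{every} loop datum $k_\pfk\mid d_\pfk$ contributes to $\Omega^{FLS}_1$, not only $k_\pfk=1$; this is visible in the explicit formula for $R^{FLS}_{g,(1,n)}$ in Section \ref{section computation primitive case}, where the inner sum runs over all divisors. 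This does not affect your overall strategy, since the primitive comparison still reduces to matching that closed formula against the computations of \cite{bryan2018curve}. Finally, you correctly identify the genuine content as an abelian extension of \cite{bousseau2019tropical}, and your list of what such an extension must handle (log GW setup on a degenerate abelian surface, the second refined multiplicity $M_\Gamma$, the factor $\Lambda_\Gamma^\Sigma$) is accurate; this remains open.
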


\section{Computations and regularity results}

\subsection{Computation in the primitive case}
\label{section computation primitive case}

\subsubsection{Explicit formulas for the primitive invariants.} To compute the enumerative invariants, the multiple cover formula reduces their computation to the primitive classes. The generating series of the invariants $N_{g,\left(\begin{smallmatrix}
1 & 0 \\
0 & n \\
\end{smallmatrix}\right)}$ and $N^{FLS}_{g,\left(\begin{smallmatrix}
1 & 0 \\
0 & n \\
\end{smallmatrix}\right)}$ have already been computed in \cite{bryan1999generating}. The result can be recovered using the pearl diagrams since the pearl diagram of bidegree $(1,n)$ are especially simple.

\begin{prop}
We have
$$\begin{array}{>{\displaystyle}r>{\displaystyle}l}
N_{g,\left(\begin{smallmatrix}
1 & 0 \\
0 & n \\
\end{smallmatrix}\right)} & = g\sum_{a_1+\cdots+a_{g-1}=n}\prod_{i=1}^{g-1}a_i\sigma_1(a_i), \\
R_{g,\left(\begin{smallmatrix}
1 & 0 \\
0 & n \\
\end{smallmatrix}\right)} & = g\sum_{a_1+\cdots+a_{g-1}=n}\prod_{i=1}^{g-1}\left(\sum_{k|a_i}\frac{a_i}{k_i}\left[k_i\right]_-^2\right), \\
N^{FLS}_{g,\left(\begin{smallmatrix}
1 & 0 \\
0 & n \\
\end{smallmatrix}\right)} & = \sum_{a_1+\cdots+a_{g-1}=n}a_{g-1}\prod_{i=1}^{g-1}a_i\sigma_1(a_i), \\
R^{FLS}_{g,\left(\begin{smallmatrix}
1 & 0 \\
0 & n \\
\end{smallmatrix}\right)} & = \sum_{a_1+\cdots+a_{g-1}=n}a_{g-1}\prod_{i=1}^{g-1}\left(\sum_{k|a_i}\frac{a_i}{k_i}\left[k_i\right]_-^2\right). \\
\end{array}$$
\end{prop}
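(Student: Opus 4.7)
The plan is to exploit the extreme rigidity of bidegree $(1,n)$ pearl diagrams and read off the formulas from Theorems~\ref{theorem diagram enumeration g points} and~\ref{theorem diagram enumeration linear system}. The first step is the topological classification. For any such diagram $\Dfk$, the tropical cover $\pi_\Dfk\colon\Dfk\to\RR/\ZZ$ has constant degree $d_1=1$, so at every generic point of $\RR/\ZZ$ exactly one edge of weight $1$ passes over it; the edges with non-degenerate image therefore partition $\RR/\ZZ$ into subintervals whose endpoints are vertex positions, and every such edge connects two cyclically consecutive labels. Any further edge would have to be a self-loop with $\ell_e=0$: at a pearl this creates a cycle in the complement of the flat vertices, forbidden by condition (d) of Definition~\ref{def-pearl-diagram}, while at a flat vertex it destroys bivalency or connectedness. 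Consequently $\Dfk$ is a single oriented cycle $1\to 2\to\cdots\to g\to 1$ of $g$ weight-one edges, with one seam edge of length $1$ and the remaining $g-1$ edges of length $0$; then $b_1(\Dfk)=1$, and combined with $b_1+|V_\square|=g$ this forces $|V_\square|=g-1$ and $|V_\circ|=1$.

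Such a diagram is parametrised by the position $j\in\{1,\dots,g\}$ of the unique flat vertex together with a composition $(a_1,\dots,a_{g-1})$ of $n$ into $g-1$ positive parts assigning degrees to the pearls in cyclic order. Every pearl is bivalent and every edge has $w_e=1$, so Proposition~\ref{proposition curves in diagram point case} degenerates to $\omega_1(\Dfk)=\prod_{i=1}^{g-1}a_i\sigma_1(a_i)$; the gcd condition $\gcd(w_e,k_\pfk)=1$ is automatic, hence $\mu_1=m_1=\omega_1$, and summing over the $g$ choices of $j$ via Theorem~\ref{theorem diagram enumeration g points} yields the formula for $N_{g,(1,n)}$. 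The refined multiplicity $\Omega_1(\Dfk)=\prod_\pfk k_\pfk^{\val\pfk-1}\prod_{e\ni\pfk}[d_\pfk w_e/k_\pfk]_-$ similarly factors as $\prod_{i=1}^{g-1}\sum_{k\mid a_i}k\,[a_i/k]_-^2$, which becomes $\prod_{i=1}^{g-1}\sum_{k\mid a_i}\tfrac{a_i}{k}[k]_-^2$ after the substitution $k\leftrightarrow a_i/k$, giving the formula for $R_{g,(1,n)}$.

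The FLS case is strictly analogous. An FLS-diagram of bidegree $(1,n)$ and genus $g$ has vertex set of size $|V|=g-1$, and the same rigidity argument forces it to be a cycle of $g-1$ bivalent pearls with $|V_\circ|=0$. Proposition~\ref{proposition curves in diagram linear system case} supplies the additional factor $\Lambda_\Dfk^2 d_{\pfk_\infty}$; here $\Lambda_\Dfk=1$ because the cycle $\gamma_\Dfk$ of $\Dfk\setminus V_\circ$ wraps once around $\RR/\ZZ$, and $d_{\pfk_\infty}=a_{g-1}$ is the degree of the pearl carrying the $\infty$-label, whose cyclic slot is fixed by the doubly stretched configuration of points (so no overall factor of $g$ appears). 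Summing then produces the stated formulas for $N^{FLS}_{g,(1,n)}$ and $R^{FLS}_{g,(1,n)}$ via Theorem~\ref{theorem diagram enumeration linear system}. The principal subtlety throughout is the topological rigidity of the first paragraph: once the potentially troublesome $\ell_e=0$ edges are ruled out, everything else is a straightforward bookkeeping exercise with the diagram multiplicities already recorded.
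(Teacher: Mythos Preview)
Your proof is correct and follows essentially the same route as the paper: classify bidegree $(1,n)$ diagrams as a single cycle, then read off the formulas $\omega$, $\Omega$ (point case) and $\omega^{FLS}$, $\Omega^{FLS}$ (FLS case) on these cycles. The paper's version is terser---it simply asserts the shape of the diagrams and invokes the multiplicity formulas---whereas you spell out the degree-$1$ cover argument explicitly; one small wobble is the phrase ``self-loop with $\ell_e=0$'', since such edges are already excluded by condition~(c) of Definition~\ref{def-pearl-diagram} (a self-loop has $i_e^-\geqslant i_e^+$, hence $\ell_e>0$), but this only strengthens your conclusion that no extra edges can appear.
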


\begin{proof}
\begin{itemize}
\item A diagram of genus $g$ has a unique flat vertex which has $g$ possible labels, and $g-1$ pearls whose labels are determined by this choice, as depicted on Figure \ref{fig-primitive-pearl-diagrams}(a). We take factor $g$ to account for its marking. We then sum over the partitions of $n$ in $g-1$ parts of the diagram multiplicities. The diagram being primitive, the latter are provided by the formulas for $\omega$ and $\Omega$, yielding the expected formula.
\item For a FLS-diagram, there is no flat vertex, $g-1$ pearls with a unique possible labeling. The counts are obtained by making the sum over the partition of $n$ into $g-1$ parts of the diagram multiplicities, which are this time provided by the formula for $\omega^{FLS}$ and $\Omega^{FLS}$ respectively.
\end{itemize}
\end{proof}

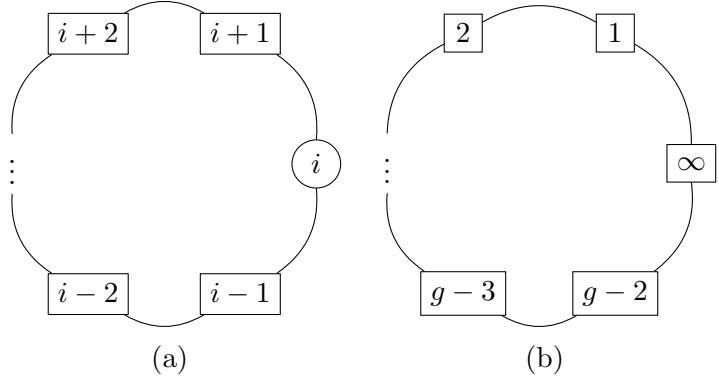
\begin{figure}
\begin{center}
\begin{tabular}{cc}
\begin{tikzpicture}[line cap=round,line join=round,x=1cm,y=1cm]
	\pearl (1) at (-120:2) label=i-2;
	\pearl (2) at (-60:2) label=i-1;
	\flatpearl (3) at (0:2) label=i;
	\pearl (4) at (60:2) label=i+1;
	\pearl (5) at (120:2) label=i+2;
	\node (6) at (180:2) {$\vdots$};
	
	\draw (1) to[bend right] (2);
	\draw (2) to[bend right] (3);
	\draw (3) to[bend right] (4);
	\draw (4) to[bend right] (5);
	\draw (5) to[bend right] (6);
	\draw (6) to[bend right] (1);
\end{tikzpicture}
&
\begin{tikzpicture}[line cap=round,line join=round,x=1cm,y=1cm]
	\pearl (1) at (-120:2) label=g-3;
	\pearl (2) at (-60:2) label=g-2;
	\pearl (3) at (0:2) label=\infty;
	\pearl (4) at (60:2) label=1;
	\pearl (5) at (120:2) label=2;
	\node (6) at (180:2) {$\vdots$};
	
	\draw (1) to[bend right] (2);
	\draw (2) to[bend right] (3);
	\draw (3) to[bend right] (4);
	\draw (4) to[bend right] (5);
	\draw (5) to[bend right] (6);
	\draw (6) to[bend right] (1);
\end{tikzpicture}
\\
(a) & (b) \\
\end{tabular}
\caption{\label{fig-primitive-pearl-diagrams}Primitive pearl diagram and FLS-diagram.}
\end{center}
\end{figure}

\begin{rem}
Concretely, the tropical curves are as follows: an horizontal edge that meets $g-1$ loops that each make a certain number of loops around the vertical direction. This certain number is actually the loop data.
\end{rem}

\subsubsection{Generating series of the classical primitive invariants.} If we denote by $G_2(y)=-\frac{1}{24}+\sum_1^\infty \sigma_1(n)y^n$ the first Eisenstein series and $D$ the differential operator $y\frac{\dd}{\dd y}$, the generating series of the classical invariants are
$$\begin{array}{r>{\displaystyle}l}
\sum_1^\infty N_{g,\left(\begin{smallmatrix}
1 & 0 \\
0 & n \\
\end{smallmatrix}\right)}y^n & = g\big(DG_2(y)\big)^{g-1}, \\
\sum_1^\infty N^{FLS}_{g,\left(\begin{smallmatrix}
1 & 0 \\
0 & n \\
\end{smallmatrix}\right)}y^n & = \big(DG_2(y)\big)^{g-2}D^2 G_2(y). \\
\end{array}$$
We have recovered the expressions from \cite{bryan1999generating}.

\subsubsection{Generating series of the refined primitive invariants.} Setting
$$S(q,y)=\sum_{n=1}^\infty \sum_{k|n}\frac{n}{k}[k]_-^2y^n,$$
where $[k]_-^2=q^k-2+q^{-k}$, we get that the generating series of the refined invariants in the primitive case are
$$\begin{array}{>{\displaystyle}r>{\displaystyle}l}
\sum_1^\infty R_{g,\left(\begin{smallmatrix}
1 & 0 \\
0 & n \\
\end{smallmatrix}\right)}y^n & = g S(q,y)^{g-1}, \\
\sum_1^\infty R^{FLS}_{g,\left(\begin{smallmatrix}
1 & 0 \\
0 & n \\
\end{smallmatrix}\right)}y^n & = S(q,y)^{g-2}y\frac{\dd}{\dd y}\big(S(q,y)\big). \\
\end{array}$$
We recover the expressions of \cite{bryan2018curve} that give the generating series of the Gromov-Witten invariants in a fixed linear system with point insertion and a $\lambda$-class insertion, suggesting that in this situation as well the refined invariants have a close relation to these specific Gromov-Witten invariants.

\subsection{Quasi-modularity of generating series}

Let $g$ and $d$ be fixed positive integers. We consider the generating series for the invariants $N_{g,(d,dn)}$ and $N^{FLS}_{g,(d,dn)}$:
$$F_{g,d}(y)=\sum_{n=1}^\infty N_{g,(d,dn)}y^n,$$
$$F^{FLS}_{g,d}(y)=\sum_{n=1}^\infty N^{FLS}_{g,(d,dn)}y^n.$$
The generating series $F_{g,1}$ and $F^{FLS}_{g,1}$ have already been computed in the previous section. As they are a polynomial in $DG_2(y)$, they are quasi-modular forms for $SL_2(\ZZ)$. We generalize this result to the generating series for the non-primitive case.

\begin{theo}\label{theorem quasimodularity}
The series $F_{g,d}$ and $F^{FLS}_{g,d}$ are quasi-modular forms for some finite index subgroup of $SL_2(\ZZ)$ containing $\left(\begin{smallmatrix} 1 & 1 \\ 0 & 1 \\ \end{smallmatrix}\right)$ and they have the following expressions:
\begin{align*}
F_{g,d}(e^{2i\pi\tau}) & =g\sum_{\substack{k|d \\ 0\leqslant l < k^2 }}\left(\frac{d}{k}\right)^{4g-3} \big(DG_2(e^{2i\pi(\tau+l)/k^2})\big)^{g-1}, \\
F^{FLS}_{g,d}(e^{2i\pi\tau}) & =\sum_{\substack{k|d \\ 0\leqslant l < k^2 }}\left(\frac{d}{k}\right)^{4g-1} \big( DG_2(e^{2i\pi(\tau+l)/k^2})\big)^{g-2}D^2G_2(e^{2i\pi(\tau+l)/k^2}). \\
\end{align*}
\end{theo}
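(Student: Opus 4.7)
The plan is to derive the explicit formulas first, and then read off quasi-modularity as a consequence, since $DG_2$ is the prototypical quasi-modular form. Starting from the multiple cover formula (ii) of Theorem \ref{theorem multiple cover formulas point} applied to $C=(d,dn)$, for which $\gcd(d,dn)=d$, I would obtain
\[
N_{g,(d,dn)} \;=\; \sum_{k\mid d} k^{4g-3}\, N_{g,(1,\,d^2n/k^2)}.
\]
After reindexing $l=d/k$, substituting into $F_{g,d}(y)$ and swapping sums gives
\[
F_{g,d}(y) \;=\; \sum_{l\mid d} (d/l)^{4g-3}\,\sum_{n\geqslant 1} N_{g,(1,\,l^2 n)}\,y^n.
\]
The inner sum is the generating series of the primitive invariants restricted to multiples of $l^2$, which is exactly the setting for the standard roots-of-unity sieve.

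Next, I would apply the classical identity: for a power series $G(z)=\sum a_m z^m$ and an integer $N\geqslant 1$,
\[
\sum_{n\geqslant 1} a_{Nn}\,z^n \;=\; \frac{1}{N}\sum_{j=0}^{N-1} G\bigl(e^{2i\pi j/N} z^{1/N}\bigr),
\]
with $N=l^2$ and $z=e^{2i\pi\tau}$, so that $e^{2i\pi j/N}z^{1/N}=e^{2i\pi(\tau+j)/l^2}$. Using the primitive generating series computed in Section \ref{section computation primitive case}, namely $G_{g,1}(y)=g\,(DG_2(y))^{g-1}$, this yields a finite expression of $F_{g,d}(e^{2i\pi\tau})$ as a double sum, indexed by divisors $k=l$ of $d$ and shifts $0\leqslant j<k^2$, of evaluations of $(DG_2)^{g-1}$ at points $e^{2i\pi(\tau+j)/k^2}$, exactly as stated. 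The FLS expression follows from the same scheme applied to Theorem \ref{theorem multiple cover formula linear case} (ii), replacing the exponent $4g-3$ by $4g-1$ and the primitive series $G_{g,1}$ by $G_{g,1}^{FLS}(y)=(DG_2(y))^{g-2}D^2G_2(y)$ recorded in Section \ref{section computation primitive case}.

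For the quasi-modularity statement, I would use that $G_2$ is quasi-modular of weight $2$ for $SL_2(\ZZ)$ and that the ring of quasi-modular forms is stable under $D=y\tfrac{\dd}{\dd y}$, hence $DG_2$ and $D^2G_2$ are quasi-modular. Lemma \ref{lemma uasi-modularity transformation} then implies that each $DG_2(e^{2i\pi(\tau+j)/k^2})$ is a quasi-modular form for a finite-index subgroup of $SL_2(\ZZ)$, namely the conjugate $\bigl(\begin{smallmatrix} k^2 & -j \\ 0 & 1\end{smallmatrix}\bigr)SL_2(\ZZ)\bigl(\begin{smallmatrix} 1/k^2 & j/k^2 \\ 0 & 1\end{smallmatrix}\bigr)\cap SL_2(\ZZ)$. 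Since quasi-modular forms are closed under finite sums and products, $F_{g,d}$ and $F_{g,d}^{FLS}$ are quasi-modular for the intersection of these subgroups, which remains of finite index because the intersection is finite. That this intersection contains $\bigl(\begin{smallmatrix} 1 & 1 \\ 0 & 1\end{smallmatrix}\bigr)$ is automatic: the series $F_{g,d}$ and $F_{g,d}^{FLS}$ are a priori power series in $y=e^{2i\pi\tau}$, hence invariant under $\tau\mapsto\tau+1$, so the transformation law is trivially satisfied for this generator.

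The main technical obstacle will be the careful bookkeeping in the passage from the multiple cover formula to the double sum: matching the explicit power of $d/k$ with the $1/k^2$ factor that the sieve introduces, and ensuring that the contribution of each divisor is correctly packaged into the form stated in the theorem. A secondary but manageable point is to verify that the finite-index subgroups produced by Lemma \ref{lemma uasi-modularity transformation}, one for each pair $(k,j)$ appearing in the double sum, share a common finite-index intersection; this is a standard fact because finite intersections of finite-index subgroups of $SL_2(\ZZ)$ remain of finite index, and the subgroups in question all contain principal congruence subgroups $\Gamma(N)$ for $N$ divisible by the $k^2$ involved.
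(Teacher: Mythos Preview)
Your proposal is correct and follows essentially the same route as the paper: apply the multiple cover formula to reduce to the primitive series, extract the subseries supported on multiples of $k^2$ via the roots-of-unity sieve, substitute the known expression $F_{g,1}=g(DG_2)^{g-1}$ (resp.\ its FLS analogue), and deduce quasi-modularity from Lemma \ref{lemma uasi-modularity transformation} together with closure under finite sums and products. Your explicit attention to the $1/k^2$ factor produced by the sieve is in fact more careful than the paper's own argument, which silently drops it in the displayed computation of the partial generating series.
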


\begin{proof}
Taking the generating series of the invariants, the multiple cover formula (ii) from Theorem \ref{theorem multiple cover formulas point} gives that
$$F_{g,d}(y)=\sum_{k|d}\left(\frac{d}{k}\right)^{4g-3}\sum_1^\infty N_{g,(1,k^2 n)}y^n.$$
Thus, we need to compute the partial generating series of $F_{g,1}$:
\begin{align*}
\sum_{0\leqslant l < k^2}F_{g,1}\left(\frac{\tau+l}{k^2}\right) & = \sum_{0\leqslant l < k^2}\sum_{n=1}^\infty N_{g,(1,n)}e^{2i\pi\frac{n}{k^2}(\tau+l)} \\
& = \sum_{n=1}^\infty N_{g,(1,k^2 n)}e^{2i\pi n\tau}.\\
\end{align*}
Making the sum over divisors $k$ of $d$ and replacing $F_{g,1}$ by its expression yields the desired expression. Moreover, it shows that the function is $1$-periodic. According to lemma \ref{lemma uasi-modularity transformation}, the functions on the left are quasi-modular forms for some subgroup of $SL_2(\ZZ)$. Intersecting their modular groups over the diviors $k$ of $d$ proves that the sum is also quasi-modular. Thus, so is $F_{g,d}$. The fixed linear system case is treated similarly.
\end{proof}

\subsection{Polynomiality of the coefficients of the refined invariants}

We now investigate the refined invariants.

\subsubsection{Degree and leading term.} The explicit formulas from Section \ref{section computation primitive case} easily yield the following.

\begin{prop}
The refined invariant $R_{g,(1,n)}(q)$ is of degree $n$. Its leading coefficient is $g\left(\begin{smallmatrix} n-1 \\ g-2 \\ \end{smallmatrix}\right)$.
\end{prop}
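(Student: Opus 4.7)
The plan is to read both the degree and the leading coefficient directly off the explicit formula
$$R_{g,(1,n)}(q) = g\sum_{a_1+\cdots+a_{g-1}=n}\prod_{i=1}^{g-1}\left(\sum_{k|a_i}\frac{a_i}{k}[k]_-^2\right)$$
established in Section \ref{section computation primitive case}, where $[k]_-^2 = q^k - 2 + q^{-k}$. The strategy is to identify the top-degree contribution factor-by-factor, then collect.

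First, fix a positive integer $a$ and examine the single factor $\sum_{k|a}\frac{a}{k}[k]_-^2$. As $k$ ranges over divisors of $a$, the largest power of $q$ appearing is $q^a$, which arises only from $k=a$, and no smaller divisor $k<a$ can produce a power higher than $q^{a/2}$ (in fact, the next largest divisor of $a$ is at most $a/2$). Hence this Laurent polynomial has degree exactly $a$, with leading coefficient $\frac{a}{a}\cdot 1 = 1$.

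Next, multiplying the $g-1$ such factors corresponding to a composition $(a_1,\dots,a_{g-1})$ of $n$ yields a Laurent polynomial of degree $a_1+\cdots+a_{g-1}=n$ with leading coefficient $1$. Since every composition contributes a polynomial of the same degree $n$ with leading coefficient $1$ (and these top terms cannot cancel, being positive), the sum over all compositions of $n$ into $g-1$ positive parts produces a polynomial of degree $n$ whose leading coefficient equals the number of such compositions. A standard stars-and-bars count gives $\binom{n-1}{g-2}$ compositions. Multiplying by the outer factor $g$ yields the claimed leading coefficient $g\binom{n-1}{g-2}$.

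There is no real obstacle here: the statement is essentially a direct combinatorial reading of the closed form, and the only thing to check carefully is that the top-degree terms across distinct compositions add rather than cancel, which follows from positivity of the leading coefficient $1$ of each factor.
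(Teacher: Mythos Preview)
Your proof is correct and follows essentially the same approach as the paper: identify that each factor $\sum_{k|a}\frac{a}{k}[k]_-^2$ has degree $a$ with leading coefficient $1$, so the product over a composition has degree $n$ with leading coefficient $1$, and then count compositions of $n$ into $g-1$ positive parts via stars-and-bars. The paper writes the factor in the reindexed form $\sum_{k|a}k\left[\frac{a}{k}\right]_-^2$, but the argument is the same.
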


\begin{proof}
The polynomial $\sum_{k|a}k\left[\frac{a}{k}\right]_-^2$ is of degree $a$ and its leading term is $1$. Thus, the formula for $R_{g,(1,n)}$ asserts that it is of degree $n$ and its leading term is equal to $g$ times the number of partitions of $n$ into $g-1$ numbers bigger than $1$. The latter is equal to the number of partitions of $n-g+1$ into $g-1$ non-negative numbers, equal to $\left(\begin{smallmatrix} (n-g+1)+(g-2) \\ g-2 \\ \end{smallmatrix}\right)$: writing $n-g+1$ as $1+1+\cdots +1$,  we have to place $g-2$ separations.
\end{proof}

\begin{rem}
The computation is similar for the fixed linear system case. The only difference is that the leading term is now
\begin{align*}
\sum_{a_1+\cdots+a_{g-1}=n}a_{g-1} & = \sum_{a_{g-1}=1}^{n-(g-2)}a_{g-1}\sum_{a_1+\cdots + a_{g-2}=n-a_{g-1}}1 \\
& = \sum_{l=1}^{n-g+2} l\left(\begin{smallmatrix} n-l-1 \\ g-3 \\ \end{smallmatrix}\right).\\
\end{align*}
It is also a polynomial in $n$ for any choice of $g$.
\end{rem}

\begin{coro}
The refined invariant $R_{g,(d_1,d_2)}(q)$ is of degree $d_1d_2$. Its leading coefficient is $g\left(\begin{smallmatrix} d_1d_2-1 \\ g-2 \\ \end{smallmatrix}\right)$.
\end{coro}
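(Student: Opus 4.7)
The plan is to derive the corollary directly from the refined multiple cover formula (iv) of Theorem \ref{theorem multiple cover formulas point}, combined with the preceding proposition giving the degree and leading coefficient in the primitive case. Specifically, the multiple cover formula reads
$$R_{g,\left(\begin{smallmatrix} d_1 & 0 \\ 0 & d_2 \\ \end{smallmatrix}\right)}(q) = \sum_{k \mid \gcd(d_1,d_2)} k^{2g-1}\, R_{g,\left(\begin{smallmatrix} 1 & 0 \\ 0 & d_1 d_2 / k^2 \\ \end{smallmatrix}\right)}(q^k),$$
so it suffices to track the degree and leading coefficient of each summand.

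I would first apply the previous proposition to each summand: the inner polynomial $R_{g,(1, d_1 d_2 / k^2)}(q)$ has degree $d_1 d_2 / k^2$, so after the substitution $q \mapsto q^k$, the $k$-th summand contributes a Laurent polynomial of degree $k \cdot (d_1 d_2 / k^2) = d_1 d_2 / k$. Among the divisors $k \mid \gcd(d_1,d_2)$, this degree is strictly maximized at $k = 1$, where it equals $d_1 d_2$, and every other summand has degree at most $d_1 d_2 / 2 < d_1 d_2$. Consequently, no cancellation can occur at the top degree, and the degree of $R_{g,(d_1,d_2)}(q)$ is exactly $d_1 d_2$, with leading coefficient inherited entirely from the $k = 1$ summand.

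Finally, the $k = 1$ summand is $R_{g,(1,d_1 d_2)}(q)$ itself (with prefactor $1^{2g-1} = 1$), and by the preceding proposition its leading coefficient is $g \binom{d_1 d_2 - 1}{g - 2}$. This yields the claim. There is no genuine obstacle: once the multiple cover formula is in hand, the argument is an elementary degree comparison, and the only subtlety worth flagging is that the exponent $k^{2g-1}$ outside does not affect the degree of the $q$-polynomial but only rescales lower-degree contributions, so it plays no role in identifying the leading term.
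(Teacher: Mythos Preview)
Your proof is correct and follows essentially the same approach as the paper: both use the refined multiple cover formula (iv) and observe that the $k=1$ summand $R_{g,(1,d_1d_2)}(q)$ is the unique term of maximal degree $d_1d_2$, so it alone contributes the leading coefficient. Your version simply spells out the degree comparison $d_1d_2/k$ explicitly, whereas the paper's proof is a one-line reference to the cover formula.
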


\begin{proof}
From the multiple cover formula (iii) and (iv) in Theorem \ref{theorem multiple cover formulas point}, we can see that the only term from the cover formula which contributes is $R_{g,(1,d_1d_2)}$.
\end{proof}

\subsubsection{Coefficients of fixed codegree.} In \cite{brugalle2020polynomiality}, E. Brugall\'e and A. Puentes prove the following regularity result for the refined invariants of specific toric surfaces. For simplicity, we state it for the projective plane. If we cancel the denominators in the Block-G\"ottsche multiplicities, the refined invariant $BG_{g,d}(q)$ is a (anti-)symmetric Laurent polynomial of degree $d^2/2$. For a fixed $p$ and $g$, the coefficient in $q^{d^2/2-p}$ (codegree $p$) is asymptotically a polynomial in $d$. Moreover, they give explicit bounds for the domain on which the coefficient is a polynomial. We here prove a similar result for the coefficients of our invariants $R_{g,C}$.

\begin{theo}\label{theorem polynomiality}
The coefficient $\langle R_{g,(d_1,d_2)}\rangle_p$ of codegree $p$ in $R_{g,(d_1,d_2)}$ is a polynomial in $d_1d_2$ for $d_1d_2>\max(g-1,2p)$.
\end{theo}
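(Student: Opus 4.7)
The plan is to combine the multiple cover formula of Theorem \ref{theorem multiple cover formulas point}(iv) with the explicit primitive formula from Section \ref{section computation primitive case}. The first step is a reduction to the primitive case. In the cover formula
$$R_{g,(d_1,d_2)}(q)=\sum_{k|\gcd(d_1,d_2)}k^{2g-1}R_{g,(1,d_1d_2/k^2)}(q^k),$$
each summand with $k\geq 2$ is a Laurent polynomial in $q$ of maximal exponent $d_1d_2/k \leq d_1d_2/2$, so it cannot contribute to the coefficient of $q^{d_1d_2-p}$ as soon as $d_1d_2 > 2p$. In that range only $k=1$ contributes, giving $\langle R_{g,(d_1,d_2)}\rangle_p = \langle R_{g,(1,n)}\rangle_p$ with $n:=d_1d_2$, so it remains to show $\langle R_{g,(1,n)}\rangle_p$ is polynomial in $n$ for $n>\max(g-1,2p)$.

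The key combinatorial input is a description of the codegree coefficients of $f_a(q):=\sum_{k|a}(a/k)(q^k-2+q^{-k})$. The coefficient of $q^{a-p_i}$ in $f_a$ is zero unless $p_i=0$ (giving $1$), $p_i=a$ (giving $-2\sigma_1(a)$), or $|a-p_i|$ is a positive divisor of $a$ distinct from $a$ (giving $a/|a-p_i|$). Consequently, for every $p_i\geq 1$ the set $A_{p_i}$ of $a$'s producing a non-zero contribution is finite and contained in $\{1,\dots,2p_i\}$. This is the heart of the argument.

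Expanding $\langle R_{g,(1,n)}\rangle_p = g\sum_{\sum p_i=p}\sum_{\sum a_i=n,\,a_i\geq 1}\prod_i\langle f_{a_i}\rangle_{p_i}$, I would partition the indices according to the set $S$ of \emph{active} indices (those with $p_i\geq 1$) of size $s$. For each choice of finite data $(S,(p_i)_{i\in S},(a_i)_{i\in S}\in\prod_{i\in S}A_{p_i})$ the sum over the remaining \emph{inactive} $(a_j)_{j\notin S}$ is the number of positive compositions of $n-A$ into $g-1-s$ parts, where $A=\sum_{i\in S}a_i\leq 2p$. This count equals the polynomial $\binom{n-A-1}{g-2-s}$ in $n$ as soon as $n-A\geq 1$, since both vanish on $\{1,\dots,g-2-s\}$ and coincide for $n-A\geq g-1-s$. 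The hypothesis $n>2p\geq A$ therefore forces every term of the expansion to be polynomial in $n$, while the degenerate case $s=g-1$ is excluded since it requires $n=A\leq 2p$. Combined with the direct formula $\langle R_{g,(1,n)}\rangle_0 = g\binom{n-1}{g-2}$ for the $p=0$ case (polynomial in $n$ for $n\geq g-1$), we obtain polynomiality for $n>\max(g-1,2p)$. The main obstacle is the combinatorial second step, which crucially relies on the divisor structure to bound $A_{p_i}$ by $2p_i$; once that is established, the rest reduces to manipulations of binomial polynomials and counting of compositions.
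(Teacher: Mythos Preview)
Your proof is correct and follows essentially the same approach as the paper: reduce to the primitive case via the multiple cover formula (noting that terms with $k\geq 2$ have degree at most $d_1d_2/2$), then analyze the codegree contributions by observing that the factors with $p_i\geq 1$ force $a_i\leq 2p_i$, leaving a count of compositions which is a binomial polynomial in $n$. Your write-up is in fact a bit more careful than the paper's about the boundary behavior of the binomial (the paper just asserts ``the latter is a polynomial in $n$''), and you make explicit the exclusion of the fully active case $s=g-1$.
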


\begin{proof}
The formula for $R_{g,(1,n)}$ is a sum over the partitions $a_1+\cdots+a_{g-1}$ of $n$ of a product polynomials each only having terms of degree $\pm a_i/k_i$ and $0$ for every divisor $k_i$ of $a_i$. Each term is of degree $n$. Thus, to get a term of codegree $p$, we need to dispatch the codegree $p$ among the terms of each product:
$$\langle R_{g,(1,n)}\rangle_p = g \sum_{a_1+\cdots+a_{g-1}=n}\sum_{p_1+\cdots+p_{g-1}=p}\prod_1^{g-1} \left\langle \sum_{k|a_i}k\left[\frac{a_i}{k}\right]_-^2\right\rangle_{p_i},$$
where $p_i$ are between $0$ and $p$. For each $a_i$ and $k|a_i$ bigger than $1$, we have $\frac{a_i}{k_i}\leqslant\frac{a_i}{2}$. Thus, if $a_i>2p$, the polynomial $\sum_{k|a}k\left[\frac{a_i}{k}\right]_-^2$ has no term of codegree between $1$ and $p$, and the corresponding term in the product is $0$. Therefore, for each choice of $p_1+\cdots+p_{g-1}=p$, we have a finite number of possibilities for the choice $a_i$ with $p_i\neq 0$. Then, the remaining choice for the other $a_i$ is the number of partitions of $n-\sum_{p_i\neq 0} a_i$ into positive numbers. The latter is a polynomial in $n$, given as some binomial coefficient.

\medskip

In the non-primitive case, it is a consequence of the multiple cover formula from Theorem \ref{theorem multiple cover formulas point}. It asserts that
$$R_{g,(d_1,d_2)}=\sum_{k|\mathrm{gcd}(d_1,d_2)}k^{2g-1}R_{g,(1,d_1d_2/k^2)}(q^k).$$
The term in the sum is of degree $\frac{d_1d_2}{k}$. Thus, if $d_1d_2>2p$, only the $k=1$ term contributes to the codegree $p$ coefficient since none of the other terms has a sufficiently big degree.
\end{proof}

\begin{expl}
Let us compute the coefficients of codegree $1$ and $2$ for $R_{g,(1,n)}$. The leading term has already be computed. For the coefficient of codegree $1$, we can have $a_i=1$ or $2$ for exactly one $i$, since they are the only polynomials $\sum_{k|a}k(q^{a/k}-2+q^{-a/k})$ with a non-zero codegree $1$ term. Thus, the coefficient is
\begin{align*}
\langle R_{g,(1,n)}\rangle_1 & = g(g-1)\left(\begin{smallmatrix} n-2 \\
g-3 \\ \end{smallmatrix} \right)(-2) + g(g-1)\left(\begin{smallmatrix} n-3 \\
g-3 \\ \end{smallmatrix} \right)2 \\
& = -2g(g-1)\left(\begin{smallmatrix} n-3 \\
g-4 \\ \end{smallmatrix} \right)
\end{align*}
For the coefficient of codegree $2$, we now have more possibilities:
\begin{itemize}[label=-]
\item We have one $a_i=1$ and take the codegree $2$ coefficient,
\item We have one $a_i=2$ and take the codegree $2$ coefficient,
\item We have two $a_i$ equal to $1$ where we take the codegree $1$ coefficients,
\item We have two $a_i$ equal to $2$ where we take the codegree $1$ coefficients,
\item We have one $a_i$ equal to $1$ and one equal to $2$ where we take the codegree $1$ coefficients.
\end{itemize}
We thus get the following formula:
\begin{align*}
\langle R_{g,(1,n)}\rangle_2 = & g(g-1)\left(\begin{smallmatrix} n-2 \\
g-3 \\ \end{smallmatrix} \right)(1) + g(g-1)\left(\begin{smallmatrix} n-3 \\
g-3 \\ \end{smallmatrix} \right)(-3)+ g\frac{(g-1)(g-2)}{2}\left(\begin{smallmatrix} n-3 \\
g-4 \\ \end{smallmatrix} \right)(-2)^2 \\
&  + g\frac{(g-1)(g-2)}{2}\left(\begin{smallmatrix} n-5 \\
g-4 \\ \end{smallmatrix} \right)2^2 + g(g-1)(g-2)\left(\begin{smallmatrix} n-4 \\
g-4 \\ \end{smallmatrix} \right)(-2)2. \\
=& g(g-1)\left( \left(\begin{smallmatrix} n-2 \\
g-3 \\ \end{smallmatrix} \right) -3\left(\begin{smallmatrix} n-3 \\
g-3 \\ \end{smallmatrix} \right) \right) +2g(g-1)(g-2)\left(\begin{smallmatrix} n-5 \\
g-6 \\ \end{smallmatrix} \right).\\
\end{align*}
\end{expl}

\bibliographystyle{alpha}
\bibliography{biblio}

\end{document}